\numberwithin{equation}{section}
\newtheorem{Definition}{Definition}[section]
\newtheorem{Theorem}{Theorem}[section]
\newtheorem{Lemma}{Lemma}[section]
\newtheorem{Proposition}{Proposition}[section]
\newtheorem{Example}{Example}[section]
\newtheorem{Remark}{Remark}[section]
\newtheorem{Corollary}{Corollary}[section]
\newcommand{\nada}[1]   {}
\newcommand{\A}         {\mathcal A}
\newcommand{\ab}         {[a,b]}
\newcommand{\area}         {\areafun}
\newcommand{\areafun}         {\mathcal A}
\newcommand{\BV}        {\mathrm{BV}}
\newcommand{\BVo}       {\BV(\Om;\R^2)}
\newcommand{\C}         {\mathcal C}
\newcommand{\Ce}         {\dommap_\eps}
\newcommand{\Cuno}	{{\mathcal C}^1(\Om;\R^2)}
\newcommand{\dommap}        {D}
\newcommand{\dKe}{\partial \Ke}
\newcommand{\disk}      {B}
\newcommand{\disco} {\disk}
\newcommand{\discoest}  {\disk^{{\rm ext}}}
\newcommand{\DM} {{\rm D}(\Om;\R^2)}
\newcommand{\DMjump} {{\rm D}(\Om \setminus \osaltou ;\R^2)}
\newcommand{\dx}      {\partial_x}
\newcommand{\dy}      {\partial_y}
\newcommand{\e}       {\mu}
\newcommand{\extdisk}       {\discoest}
\newcommand{\extimmmapmin}       {\Sigma^{{\rm ext}}}
\newcommand{\extmapmin}  {Y^{{\rm ext}}}
\newcommand{\extdommap}  {\dommap^{{\rm ext}}}
\newcommand{\eps}       {\varepsilon}
\newcommand{\gammam}         {\gamma^-}
\newcommand{\gammamu}         {\gamma^-_1}
\newcommand{\gammamd}         {\gamma^-_2}
\newcommand{\gammap}         {\gamma^+}
\newcommand{\gammapu}         {\gamma^+_1}
\newcommand{\gammapd}         {\gamma^+_2}
\newcommand{\gammapm}         {\gamma^\pm}
\newcommand{\Gammau}         {\Gamma}
\newcommand{\gauss}         {\mathcal{N}}
\newcommand{\gm}         {\gammam}
\newcommand{\gp}         {\gammap}
\newcommand{\grad}      {\nabla}
\renewcommand{\H}       {\mathcal{H}}
\newcommand{\height}  {h}
\newcommand{\hp}        {$\rm \tilde{u}$}
\newcommand{\immmapmin}	{\Sigma_{\min}}
\newcommand{\K}{K}
\newcommand{\Ke}{K_\e}
\newcommand{\Luno}	{L^1(\Om;\R^2)}
\newcommand{\M}         {\mathcal M}
\newcommand{\mapmin}    {\minpar}
\newcommand{\map}      {\mappa}
\newcommand{\mappa}      {\mathbf{u}}
\newcommand{\mappav}      {\mathbf{v}}
\newcommand{\mappavuno}      {v_1}
\newcommand{\mappavdue}      {v_2}
\newcommand{\mappai}      {{\rm u}_i}
\newcommand{\mappadue}      {{\rm u}_2}
\newcommand{\mappauno}      {{\rm u}_1}
\newcommand{\minpar}    {X}
\newcommand{\Om}        {\Omega}
\newcommand{\openMorse}      {{\rm B}}
\newcommand{\osalto}        {\osaltou}
\newcommand{\osaltou}        {\overline J_\mappa}
\newcommand{
\parahilduno
}        {u}
\newcommand{\parabordo}   {{\mathit g}}
\newcommand{\parabordopalla}   {\mathbf{b}}
\newcommand{\parahilddue}        {v}
\newcommand{\paramap}        {Y}
\newcommand{\phd} {\parahilddue}
\newcommand{\phu} {\parahilduno}
\newcommand{\pie} {\pi_\e}
\newcommand{\ppa} {\primoparametroastratto}
\newcommand{\primacoordsource}        {x}
\newcommand{\primacoordtarget}        {\xi}
\newcommand{\primoparametroastratto}        {t}
\newcommand{\ps} {S}
\newcommand{\pn} {N}
\newcommand{\R}         {\ensuremath{\mathbb R}}
\newcommand{\Reps}      {R_\eps}
\newcommand{\rel}      {\overline \A}
\newcommand{\rettangolo} {R}
\def\rest{\hskip 1pt{\hbox to 10.8pt{\hfill
\vrule height 7pt width 0.4pt depth 0pt\hbox{\vrule height 0.4pt
width 7.6pt depth 0pt}\hfill}}}
\newcommand{\saltou}        {J_\mappa}
\newcommand{\secondacoordsource}        {y}
\newcommand{\secondacoordtarget}        {\eta}
\newcommand{
\secondoparametroastratto
}        {s}
\newcommand{\Sigmamu}{\widehat{\Sigma}_\mu}
\newcommand{\source}  {\R^2_{(\primacoordsource,\secondacoordsource)}}
\newcommand{\spa} {\secondoparametroastratto}
\newcommand{\target}  {\R^2_{(\primacoordtarget,\secondacoordtarget)}}
\newcommand{\tandisco}  {\tau_{\partial  B}}
\newcommand{\Te}  {T_\eps}
\newcommand{\Teu}  {T_{\eps 1}}
\newcommand{\Ted}  {T_{\eps 2}}
\newcommand{\triple}  {\mappa_{{\rm tr}}}
\newcommand{\ue}  {\mappa_\eps}
\newcommand{\ued}  {{\rm u}_{\eps 2}}
\newcommand{\uei}  {{\rm u}_{\eps i}}
\newcommand{\ueu}  {{\rm u}_{\eps 1}}
\newcommand{\Wuu}  {W^{1,1}(\Om; \R^2)}
\newcommand{\Xed}  {X_{\e 2}}
\newcommand{\Xet}  {X_{\e 3}}
\def\@makefnmark{\hbox{\@textsuperscript{\normalfont(\@thefnmark)}}}
\begin{document}

\title{On the area of the graph of a piecewise smooth map from the 
plane to the plane with a curve discontinuity
}

\author{
Giovanni Bellettini\footnote{
Dipartimento di Matematica,
Universit\`a di Roma Tor Vergata,
via della Ricerca Scientifica 1, 00133 Roma, Italy,
and
INFN Laboratori Nazionali di Frascati, Frascati, Italy.
E-mail: belletti@mat.uniroma2.it
                      }
\and
Maurizio Paolini\footnote{
Dipartimento di Matematica,
              Universit\`a Cattolica ``Sacro Cuore'',
via Trieste 17, 25121 Brescia, Italy
E-mail: paolini@dmf.unicatt.it
                         }
\and
Lucia Tealdi\footnote{International School for Advanced Studies, S.I.S.S.A., via Bonomea 265, 34136 Trieste, Italy. E-mail: ltealdi@sissa.it}
}

\date{}

\maketitle
\thanks{}

\begin{abstract}
In this paper we provide an estimate from above for the value of
the relaxed area functional $\rel(\mappa,\Omega)$ for an $\R^2$-valued map $\mappa$ defined 
on a bounded domain $\Om$ of the plane and 
discontinuous on a $\C^2$ simple curve $\osaltou \subset \Om$, with two endpoints.
We show that, under certain assumptions on $\mappa$,  $\rel(\mappa,\Omega)$ does not exceed the area
of the regular part of $\mappa$, with the addition of a singular
term measuring the area of a \textit{disk-type} solution $\immmapmin$ of the
Plateau's problem spanning the two traces of $\mappa$ on $\osaltou$. The result is valid
also when $\immmapmin$ has self-intersections.
A key element in our argument is to show the existence of
  what we call a
\textit{semicartesian parametrization} of $\immmapmin$, namely 
a conformal parametrization of $\immmapmin$ defined on a suitable parameter space,
which is the identity in the first component. 
To prove our result, various tools of  parametric minimal
surface theory are used, as well as some results from Morse theory.
\end{abstract}
%
\section{Introduction}\label{sec:intro}
Given a bounded open 
set
$\Omega \subset \R^2=\R^2_{(x,y)}$  and
a map $\mappav = (v_1,v_2): \Omega \to \R^2 = \R^2_{(\xi,\eta)}$
of class  $\C^1$, 
the area $\area(\mappav,\Om)$ of the 
graph of $\mappav$ in $\Omega$ is given by 
$$
\A(\mappav, \Om)=
\displaystyle \int_{\Om}|
\M(\grad \mappav)|
\,dx\,dy,
$$
where $|\cdot|$ denotes the euclidean norm, $\grad \mappav $ is the Jacobian
matrix of $\mappav$ and $\M(\grad \mappav)$ is the vector whose 
components are the determinants of all minors\footnote{
Including 
the determinant of
order zero, which by definition is equal to one.}  of $\grad \mappav$, hence
\begin{equation*}\label{eq:M}
\vert \M(\grad \mappav)\vert
=\sqrt{1 + |\grad{\mappavuno}|^2 + |\grad{\mappavdue}|^2+ 
\left(\partial_x \mappavuno \partial_y \mappavdue - \partial_y \mappavuno \partial_x \mappavdue \right)^2}.
\end{equation*}
The polyconvex \cite{DC:89}
 functional 
 $\area(\mappav,\Omega)$ 
has linear growth and measures the 
area of the graph of $\mappav$, a 
smooth two-codimensional surface in $\R^4 = \R^2_{(x,y)}\times
\R^2_{(\xi,\eta)}$.
When considering
the perspective of the direct method of the Calculus of Variations, 
it is important
to assign a reasonable 
notion of area also to the graph
of a {\it nonsmooth} map, namely to extend 
the functional $\area(\cdot,\Omega)$ out of $\Cuno$ in a natural way.
We agree in defining this extended area as the 
$L^1(\Omega; \R^2)$-lower semicontinuous envelope
$\overline \area(\cdot, \Omega)$ (or relaxed functional for short)
  of $\A(\cdot, \Om)$, i.e.,
\begin{equation}\label{eq:arearilassata}
\rel(\mappav, \Om):= 
\inf \left\{  \liminf_{\eps \to 0^+} \A(\mappav_\eps,\Om) \right\}
\end{equation}
where the infimum is taken over all sequences\footnote{In this paper we consider families of 
functions (or functionals, or points) indicized by a continuous
parameter;  
with a small abuse of language, these families are still called
sequences.}
 $(\mappav_\eps)
\subset \Cuno$
 converging to $\mappav$ in
$\Luno$.
The interest of definition \eqref{eq:arearilassata}
is clearly seen in the scalar case\footnote{Namely,
for functions $v : \Om \to \R$.},
 where this notion of 
extended  area 
is useful for solving non-parametric minimal 
surface problems, under various type of boundary conditions
(see for instance \cite{Gi:84}, \cite{MaMi:84}, 
\cite{GiMoSo:98}).
We recall that in the scalar case $\A(\cdot,\Omega)$ happens to be  convex, and 
$\overline \area(\cdot,\Omega)$ is completely characterized:
its domain is the space ${\rm BV}(\Omega)$ of  functions with bounded
variation in $\Omega$, and its expression is suitably
given in integral form. 

The analysis of the properties of $\overline \area(\mappav,\Omega)$ 
for maps $\mappav$ from a subset of the plane to the plane is much
more difficult \cite{GiMoSo:98}; geometrically, 
the problem is to understand which could be 
the most ``economic'' way, in terms
of two-dimensional area in $\R^4$, of approximating 
a {\it nonsmooth two-codimensional graph} of a map 
$\mappav$ of bounded variation, with graphs
of smooth maps, where
the approximation takes place 
in  $L^1(\Om;\R^2)$.
It is the aim of the present paper to address this problem 
 for discontinuous maps $\bf v$ of class $\BVo$,
having a $\mathcal C^2$-curve of discontinuity and satisfying suitable properties.

In \cite{AcDa:94} Acerbi and Dal Maso studied
the relaxation of polyconvex functionals with linear growth
in arbitrary dimension and codimension. 
In particular, they proved 
that
 $\rel(\cdot,\Omega)=\A(\cdot,\Omega)$ on $\Cuno$, and that 
 for $p\in [2,+\infty]$, 
\begin{equation*}\label{eq:estensione} 
\rel
(\mappav, \Omega) = \displaystyle \int_\Om |\M (\grad \mappav) |\,dx\,dy,
\qquad \mappav \in W^{1,p}(\Om; \R^2),
\end{equation*} 
and the exponent $p$ is 
optimal.
Concerning the representation
of $\overline \area(\cdot,\Omega)$ in $\BVo$, they proved 
\cite[Theorem 2.7]{AcDa:94}
that
the domain of $\overline \area(\cdot,\Omega)$
is contained in $\BVo$, and
\begin{equation}\label{eq:altra_diseq}
\rel(\mappav,\Om)\geq \int_{\Om}|\M(\grad \mappav)|\,dx\,dy + |D^s \mappav |(\Om),
\qquad \mappav \in \BVo,
\end{equation}
where $\grad \mappav$ and $D^s \mappav$ denote
the absolutely continuous
and the singular part of the distributional gradient $D \mappav$
of $\mappav$, respectively. 
In addition, if $\mappav \in {\rm BV}(\Omega; \{\alpha_1,\dots,\alpha_m\})$ 
where $\alpha_1,..,\alpha_m$ 
are vectors 
of $\R^2$,
and denoting
by
$\mathcal{L}^2$ and $\H^1$ the  
Lebesgue measure and the one-dimensional Hausdorff measure 
in $\R^2$ respectively, 
\begin{equation}\label{eq:duevalori}
\rel(\mappav,\Om)= \mathcal{L}^2(\Om) + 
\sum_{ \tiny{\begin{matrix}k,l \in  \{1,..,m\} \\ k<l \end{matrix}}}
|\alpha_k - \alpha_l|\,\H^1(J_{kl}),
\end{equation}
provided
 $\partial \Om$ and the jump curves  $J_{kl}$ forming the jump
set $J_{\bf v}$ of ${\bf v}$ are smooth enough 
and that $\mappav$ takes locally
only two vectors around $J_{kl}$, see 
\cite[Theorem 2.14]{AcDa:94} for the details. Finally, 
and maybe more interestingly, 
it is proven in \cite[Section 3]{AcDa:94}
that 
the relaxed area
is not subadditive with respect to $\Om$, 
thus in particular it does not admit an integral representation,
hence it is \textit{non-local}.
The non-subadditivity of $\rel(\mappav,\cdot)$, 
conjectured by De Giorgi in \cite{DG:92}, 
concerns 
the triple junction map $\triple$, which is a map
defined on the unit disk of the source 
plane, and 
 assumes as values
three non-collinear vectors  on three circular congruent sectors.  
The proof given in \cite{AcDa:94} does not 
supply the precise value of $\rel(\triple, \Omega)$, however it
provides
a nontrivial lower bound and an upper bound. 
The upper bound was refined in 
\cite{BelPao:10}, where the authors exhibited an approximating
sequence (conjectured to be optimal\footnote{In the 
sense that equality should hold in \eqref{eq:arearilassata} along the 
above mentioned sequence.}, at least
under symmetry assumptions)
 constructed by solving 
three (similar) Plateau-type problems coupled at the 
triple point\footnote{The construction
of \cite{BelPao:10} is intrinsically four-dimensional and 
cannot be reduced to a three-dimensional
construction.}.  The singular contribution concentrated
 over to the 
triple point
arising in 
this construction,
consists of a term penalizing the length of the Steiner-graph
connecting the three values in the target space $\R^2$,
with weight two. If the construction of \cite{BelPao:10} were 
optimal, it would shed some light on the nonlocality
phenomenon addressed in \cite{DG:92} and \cite{AcDa:94}. 

The question arises as to whether the nonlocality is due to the
special form of the triple junction map $\triple$, 
or whether it can be obtained
for other qualitatively different maps  $\mappav$.
We are not still able to answer this question,
which nevertheless can be considered as the 
main motivation of the present paper. In this direction, 
our idea is to study the properties
of $\rel(\cdot,\Omega)$, for maps  
generalizing those in \eqref{eq:duevalori}, with
no triple or multiple  junctions. 
Namely, we are interested in $\rel(\mappa,\Omega)$,
where $\mappa$ is 
regular enough in $\Om \setminus \osaltou$, and the jump 
set $\saltou$ is a $\C^2$ simple curve compactly contained\footnote{
As one can deduce from our proofs,
the case when $\osaltou\cap \partial \Omega \neq \emptyset$
 requires a separate study, leading to 
a Plateau-type problem with partial free boundary, and will be
investigated elsewhere. Also, the case 
when $\osaltou \subset \Omega$ is a closed simple curve 
 is out of the scope of the present paper,
since it leads to the study of minimal immersions of 
an annulus in $\R^3$.}
 in $\Om$. 
It is worth anticipating that we are concerned here
only with an estimate from above of the value of the relaxed area,
and we shall not face the problem of the estimate from below. 
Nevertheless, we believe our construction of the recovery sequence
to be optimal, at least for a reasonably large class of maps.

Referring to the next sections for the details,
we now briefly sketch the main
results and the ideas of the 
present paper.
Suppose that 
 $\mappa \in \BVo$ is a vector valued map regular enough in $\Omega \setminus \osaltou$, 
and let us parametrize 
$\osaltou$ with a map $\alpha : t\in \ab \to \alpha(t) \in \osaltou$. Denote
by  
$\mappa^\pm$ the two traces of 
$\mappa$ on $\osaltou$, and
let $\gamma^\pm$, defined in $\ab$, be the composition of $\mappa^\pm$
with the parametrization $\alpha$.
Let us define $\Gamma$ as the union of the graphs of 
$\gamma^+$ and $\gamma^-$. Our regularity 
assumptions ensure that 
$\Gamma$ is a rectifiable, simple and closed space curve, with a special 
structure, due to the fact that it is union of graphs of 
two vector maps defined in the same interval $\ab$
(Definition \ref{def:semicart_curve}). Finally, 
let us denote by $\Sigma_{\min}$ an area minimizer 
solution of 
the
 Plateau's problem for $\Gamma$, in the class of surfaces spanning
$\Gamma$ and having 
the {\it topology of the disk} \cite{DiHiSa:10}. 
Suppose that $\Sigma_{\min}$ admits what we call a {\it semicartesian
parametrization} (Definition \ref{def:semicart_par}), namely 
 a global parametrization whose first component coincides with 
the parameter $t\in \ab$.
Our first result reads as follows.
\begin{Theorem}\label{teo:primo}
Under the above
assumptions,
there exists a sequence $(\ue)$ of sufficiently regular\footnote{
$(\ue) \subset {\rm Lip}(\Om;\R^2)$ in Theorem \ref{teo:graph_main}, 
and $(\ue) \subset W^{1,2}(\Om;\R^2)$ in Theorem \ref{teo:general_main}.
}
maps
converging to $\mappa$ in $\Luno$  such that 
\begin{equation}\label{eq:rilaintro}
\lim_{\eps \to 0^+} \rel(\ue,\Omega) 
= \int_{\Om\setminus \osaltou} |\M(\grad \mappa)|\,dx\,dy + \H^2(\immmapmin).
\end{equation}
In particular
\begin{displaymath}
\rel(\mappa, \Om)\leq 
 \int_{\Om\setminus \osaltou} |\M(\grad \mappa)|\,dx\,dy + \H^2(\immmapmin).
\end{displaymath}
\end{Theorem}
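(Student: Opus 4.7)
The plan is to construct the recovery sequence $(\ue)$ explicitly by combining $\mappa$ away from $\osaltou$ with a thin transverse bridge built from the semicartesian parametrization of $\immmapmin$. Since $\osaltou$ is $\C^2$ and compactly contained in $\Om$, for $\eps>0$ sufficiently small there is a tubular neighborhood $\Te\Subset\Om$ of half-width $\eps$ with coordinates $(t,r)\in[a,b]\times[-\eps,\eps]$ given by $(t,r)\mapsto\alpha(t)+r\nu(t)$, where $\nu$ is a unit normal along $\alpha$. Write the semicartesian parametrization on its parameter domain $D\subset[a,b]\times\R$ in the form $\Sigma(t,s)=(t,X(t,s),Y(t,s))$, with $\Sigma(t,s^\pm(t))=(t,\gammapm(t))$. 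Let $s_\eps(t,r)$ be the fiberwise-affine map sending $r=\pm\eps$ to $s^\pm(t)$, and set
\[
\ue(\alpha(t)+r\nu(t)) := \bigl(X(t,s_\eps(t,r)),\,Y(t,s_\eps(t,r))\bigr) \ \text{in}\ \Te,\qquad \ue := \mappa \ \text{on}\ \Om\setminus\Te.
\]
On the two lateral sides of $\Te$ the map $\ue$ equals $\gammapm$, which coincides with the trace of $\mappa$ on $\osaltou$; smoothing the residual $O(\eps)$ mismatch by an $\eps$-collar (or a mollification) yields a Lipschitz (resp.\ $W^{1,2}$) map with $\ue\to\mappa$ in $\Luno$, since $|\Te|\to 0$ and $\ue$ is uniformly bounded.

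For the area, split $\A(\ue,\Om)=\A(\ue,\Om\setminus\Te)+\A(\ue,\Te)$. On $\Om\setminus\Te$ we have $\ue=\mappa$, and dominated convergence gives $\A(\ue,\Om\setminus\Te)\to\int_{\Om\setminus\osaltou}|\M(\grad\mappa)|\,dx\,dy$. Inside the tube the graph in $\R^4$ is parametrized by $(t,r)\mapsto(\alpha(t)+r\nu(t),X(t,s_\eps(t,r)),Y(t,s_\eps(t,r)))$. Setting $c:=\partial_r s_\eps=(s^+(t)-s^-(t))/(2\eps)$ and using the Lagrange identity $|e_t|^2|e_r|^2-(e_t\cdot e_r)^2$ to compute the area element, an explicit expansion reveals that the $c^2$-terms collapse, via the algebraic cancellation $(X_s^2+Y_s^2)(X_t^2+Y_t^2)-(X_sX_t+Y_sY_t)^2=(X_sY_t-X_tY_s)^2$, into $c^2\bigl[(X_sY_t-X_tY_s)^2+X_s^2+Y_s^2\bigr]$, while all remainders are of order $O(r)$ or of order $1$ in $c$ and hence negligible after the fiberwise change of variable $dr=ds/c$. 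The resulting limit is
\[
\A(\ue,\Te)\ \xrightarrow[\eps\to 0^+]{}\ \int_D \sqrt{(X_sY_t-X_tY_s)^2+X_s^2+Y_s^2}\,dt\,ds\ =\ \H^2(\immmapmin).
\]

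The main technical obstacles are three. First, at the endpoints $t=a,b$ of $\osaltou$ the tube degenerates and $[s^-(t),s^+(t)]$ collapses to a point, so the bridge must be reconnected smoothly with the regular values of $\mappa$ there; this is handled by a cap of diameter $O(\eps)$ whose area contribution is $O(\eps)$. Second, because $c\to\infty$ inside the tube, the Lagrange expansion is only quantitative if one has precise control of $\grad \Sigma$ up to $\partial D$, and here the conformality and disk-topology built into the semicartesian parametrization are used in an essential way to ensure the required boundary regularity. Third, the $O(\eps)$ mismatch between $\ue|_{\partial\Te}=\gammapm$ and $\mappa$ on the lateral sides of the tube introduces $L^1$ and area defects of order $\eps$ that are absorbed in the same limit. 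Once these corrections are in place, \eqref{eq:rilaintro} holds, and the upper bound for $\rel(\mappa,\Om)$ follows from the $L^1$-lower semicontinuity of $\rel$ together with the Acerbi--Dal Maso identity $\rel(\ue,\Om)=\A(\ue,\Om)$, valid because each $\ue$ belongs to $W^{1,p}(\Om;\R^2)$ for some $p\ge 2$.
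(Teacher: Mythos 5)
Your construction follows the same architecture as the paper's proof of Theorems \ref{teo:graph_main} and \ref{teo:general_main}: a three-region split (outer region with $\mappa_\eps = \mappa$, a thin bridge built by rescaling the semicartesian parametrization along the normal to $\saltou$, and a transition layer), followed by the Lagrange-identity computation showing that the dominant scaling reproduces the area element of $\immmapmin$. Your fiberwise-affine reparametrization is equivalent to the paper's uniform rescaling $s \mapsto s/\eps$ on the variable-width domain $D_\eps$, so the core limit computation is the same.

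There are, however, two genuine gaps. First, the claim that the crack tips are handled by ``a cap of diameter $O(\eps)$ whose area contribution is $O(\eps)$'' misses where the difficulty actually lies. In the parametric case the derivatives $\dot\sigma^\pm$ of $\partial D$ can blow up at $t=a,b$ (see \eqref{eq:taus}), which makes $\nabla T_\eps$ unbounded in the transition collar; the paper must replace $\sigma^\pm$ by linear segments $\ell_\mu$ near the corners of $D$ and rebuild $X_\mu$ accordingly ({\tt step 1} of Theorem \ref{teo:general_main}). Moreover $\mappauno$ is assumed $W^{1,2}$ only away from the crack tips (hypothesis (\hp 2)), which forces the insertion of the radial interpolant $\psi_\eps^\mu$ on the disks $B^a_{\delta\eps/2}$, $B^b_{\delta\eps/2}$; the $O(\eps)$ area estimate there is not automatic but relies on $|\nabla\psi_\eps^\mu|\le C/\eps$ together with the different regularities imposed on the two components of $\mappa$. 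A generic cap argument does not reproduce those estimates. Second, the appeal to ``conformality $\ldots$ built into the semicartesian parametrization'' to control $\nabla X$ up to $\partial D$ is a misattribution: a semicartesian parametrization (Definition \ref{def:semicart_par}) is only required to be $W^{1,2}(D;\R^3)$, it is \emph{not} conformal, and conformality enters only in Section \ref{sec:par} to \emph{construct} such a parametrization --- it plays no role in the proof of Theorem \ref{teo:primo}. The regularity that actually makes $\mappa_\eps$ Lipschitz in Theorem \ref{teo:graph_main} comes from smoothing the convex domain $K$ and the boundary data to $\mathcal{C}^2$ approximants $K_\mu,\phi_\mu$, so that Theorem \ref{teo:giusticonvesso} gives $z_\mu\in{\rm Lip}(K_\mu)$, and then choosing $\mu=\mu_\eps$ diagonally; without this smoothing $z$ need not be Lipschitz up to the corners of $K$ and the construction would not land in ${\rm Lip}(\Om;\R^2)$ as claimed.
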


Under the hypothesis that there exists a semicartesian parametrization 
$$
X(t,s) = 
(t, X_2(t,s), X_3(t,s))
$$
of $\Sigma_{\min}$ defined on a plane 
domain $\dommap \subset \R^2_{(t,s)}$, the key point of the construction
stands in the definition of $\ue$ in a suitable neighborhood of the jump
$\saltou$. For $(x,y)$ in this neighbourhood 
 we define
the pair of functions $(t(x,y), s(x,y)) \in D$ corresponding
to the parametrization of the nearest point on $\osaltou$ to $(x,y)$, and to the 
 signed distance from $\saltou$, respectively.
Next, we define
\begin{equation}\label{eq:ueintro}
\ue(x,y):=\left( X_2\left(t(x,y), \frac{s(x,y)}{\eps}\right), X_3\left(t(x,y), \frac{s(x,y)}{\eps} \right)\right)
\end{equation}
for $(x,y)$ such that $\left(t(x,y), \frac{s(x,y)}{\eps}\right)
\in \dommap$.
Note carefully that, in this way, the definition of $\ue$ 
cannot be reduced to a one-dimensional profile, being 
intrinsically two-dimensional.
The explicit computation ({\tt step 9} of the proof of 
Theorem \ref{teo:graph_main})
 of the area of the graph of $\ue$ localized in this region
is the source of the term
$$
\H^2(\immmapmin)
$$
appearing in \eqref{eq:rilaintro}. 

It is interesting to comment on the role of the term
\begin{equation}\label{eq:deter}
\left(\partial_x {\ue}_1 \partial_y {\ue}_2 
- \partial_y {\ue}_1\partial_x {\ue}_2 \right)^2
\end{equation}
in the details of the computation. 
 If $X$ is semicartesian, the area
of $\immmapmin$ 
is given by
\begin{displaymath}
\int_{\dommap}\sqrt{|\partial_s X_2|^2 + |\partial_s X_3|^2 +
(\partial_t X_2 \partial_s X_3 - \partial_s X_2 \partial_t X_3)^2}\,dt\,ds.
\end{displaymath}
The first two addenda under the square root 
are obtained, in the limit, from 
$
\vert \grad {\ue}_1\vert^2 + 
\vert \grad {\ue}_2\vert^2$, while the 
last addendum is originated in the limit exactly by 
\eqref{eq:deter}. 

Various technical difficulties are present in the estimate of $\area(\ue,\cdot)$
outside of the above mentioned neighbourhood of $\saltou$.
Far from $\saltou$ we set $\ue:=\mappa$, while 
in a (small) intermediate 
neighbourhood  the map $\ue$ is suitably defined 
in such a way that
the corresponding contribution of the area is negligible.
The technical  point behind this construction 
is to guarantee that $\ue$ is sufficiently smooth. 
In Theorem \ref{teo:graph_main} we study the case in which 
$\immmapmin$ is the graph of a map defined on a two-dimensional 
convex domain, the so-called non-parametric case;
here an approximating argument leads to 
the Lipschitz regularity of $\ue$ in $\Omega$.
In Theorem \ref{teo:general_main}, instead, 
we study a more general situation, 
managing in building a sequence $(\ue)$ in
$W^{1,2}(\Om;\R^2)$. In this case we need to
modify the domain of the semicartesian parametrization,
in order to gain the $L^1$ integrability
of the gradients of $\ue$
and to make a further regularization near the
\textit{crack tips}, that is the end points of $\saltou$,
(see {\tt steps 1} and {\tt 2}
of  Theorem \ref{teo:general_main}). 

\smallskip
Several other comments are in order concerning Theorem \ref{teo:primo}.
First of all, and as already mentioned, 
our result provides only  an estimate from above of
the value of $\rel(\mappa, \Om)$. Only if
$\Gamma$ is contained in a plane, we are able to prove
that inequality  \eqref{eq:rilaintro}
is actually  an equality\footnote{
We believe the sequence $(\ue)$ to be a recovery sequence much 
more generally, at 
least when $\Sigma_{\min}$ 
can be identified with the support of the ``vertical component'' of a cartesian
current \cite{GiMoSo:98}
obtained by minimizing the mass among all cartesian currents
coinciding with the graph of $\mappa$ out of the jump.
In this respect,  we observe 
that the precise knowledge
of several qualitative properties of $\Sigma_{\min}$ is required
in order to prove  Theorems \ref{teo:primo} and
 \ref{teo:secondo}. 
For this reason generalizing the proof using 
an area-mininizing cartesian current seems not to be easy.}, so that $(\ue)$ becomes a recovery sequence.
This case is a slight generalization of the 
piecewise constant case \eqref{eq:duevalori} considered in \cite{AcDa:94},
and seems not enough for answering the nonlocality question
on $\rel$. 

After this remark, we come back to the important issue of
the {\it semicartesian parametrization}. First of all, a
semicartesian parametrization
 represents an intermediate situation between the non-parametric case, 
and the general case in which 
$\Sigma_{\min}$ is just an area-minimizing surface spanning $\Gamma$ 
and having the topology of the disk. We stress that
the assumptions on $\Gamma$ that ensure the existence of a
semicartesian parametrization of $\Sigma_{\rm min}$ are not 
so restrictive\footnote{
Roughly speaking, we can say (as we shall prove) that the special
structure of $\Gamma$ as union of two graphs, ``propagates''
into $\Sigma_{\min}$, ensuring the existence of a semicartesian
parametrization.}; for example the analytic
curves displayed in Figures \ref{fig:intro}(a) and (b)
satisfy the hypotheses of Theorem \ref{teo:secondo} below, and thus 
the corresponding $\immmapmin$ admit a semicartesian parametrization 
and Theorem \ref{teo:general_main} applies.
Observe that the surface $\immmapmin$ in
Figure \ref{fig:manuintro} (area-minimizing and with the topology
of the disk)
 has self-intersections\footnote{It is possible to find embedded surfaces spanning the 
same boundary with non zero-genus and lower area, 
see for example \cite[Figure 8.1.1 and Figure 8.1.2]{Mor:88}. 
Nevertheless our argument seems to be hardly generalizable 
to surfaces not of disk-type.}. 
In this case 
the map $\ue$ defined in \eqref{eq:ueintro} 
is not injective; 
of course, the source of this phenomenon is due to the higher 
codimension of ${\rm graph}(\mappa)$, and  it 
does not arise in the scalar
case.

\begin{figure}[htbp] 
\centering%
\subfigure [\label{fig:manuintro}]%
{
\def\svgwidth{6.5cm}
\begingroup%
  \makeatletter%
  \providecommand\color[2][]{%
    \errmessage{(Inkscape) Color is used for the text in Inkscape, but the package 'color.sty' is not loaded}%
    \renewcommand\color[2][]{}%
  }%
  \providecommand\transparent[1]{%
    \errmessage{(Inkscape) Transparency is used (non-zero) for the text in Inkscape, but the package 'transparent.sty' is not loaded}%
    \renewcommand\transparent[1]{}%
  }%
  \providecommand\rotatebox[2]{#2}%
  \ifx\svgwidth\undefined%
    \setlength{\unitlength}{565.98884815bp}%
    \ifx\svgscale\undefined%
      \relax%
    \else%
      \setlength{\unitlength}{\unitlength * \real{\svgscale}}%
    \fi%
  \else%
    \setlength{\unitlength}{\svgwidth}%
  \fi%
  \global\let\svgwidth\undefined%
  \global\let\svgscale\undefined%
  \makeatother%
  \begin{picture}(1,0.87709877)%
    \put(0,0){\includegraphics[width=\unitlength]{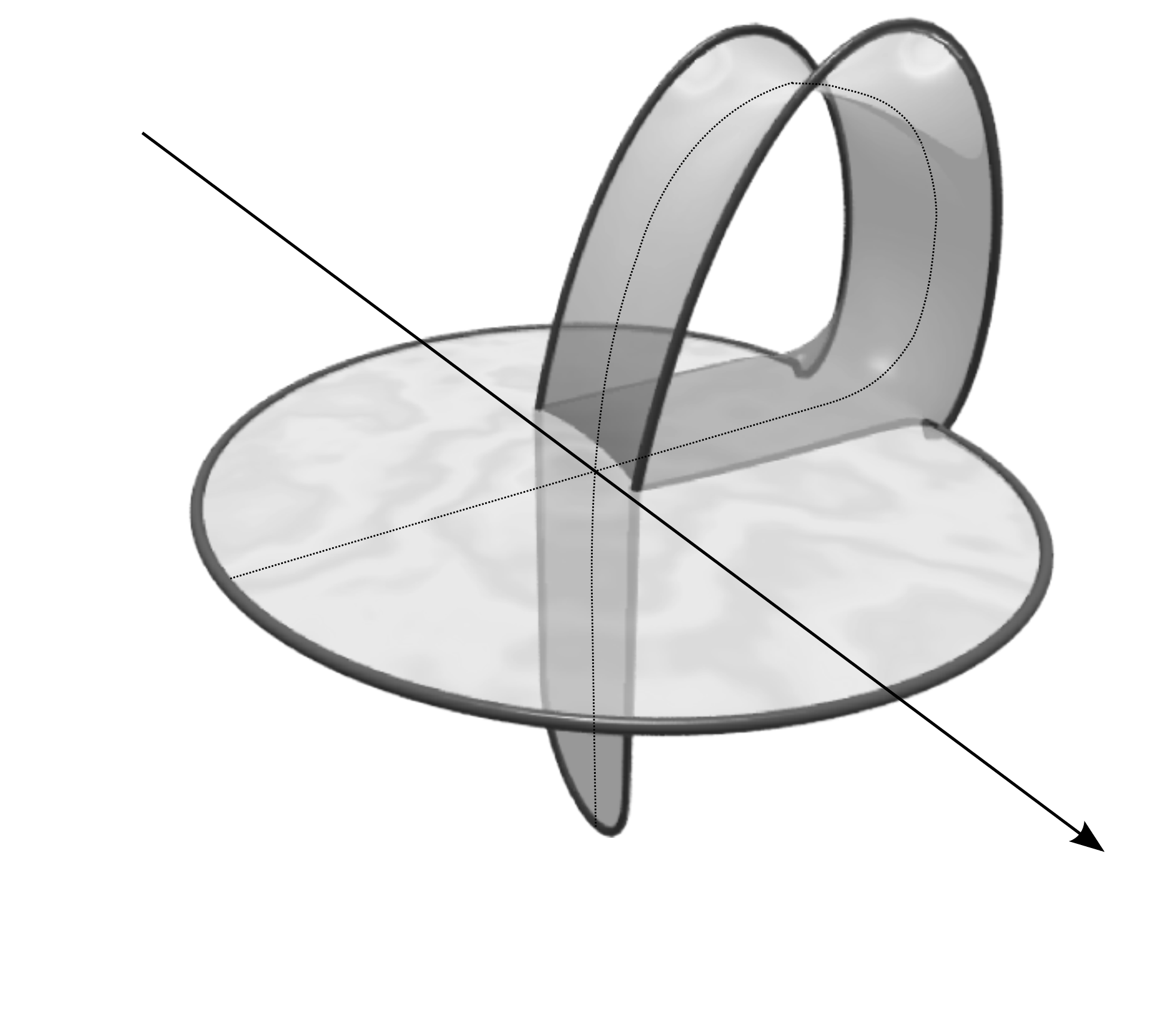}}%
    \put(0.8238426,0.12029434){\color[rgb]{0,0,0}\makebox(0,0)[lb]{\smash{$\R_t$}}}%
    \put(0.70714496,0.22035963){\color[rgb]{0,0,0}\makebox(0,0)[lb]{\smash{$N$}}}%
    \put(0.25533313,0.5756301){\color[rgb]{0,0,0}\makebox(0,0)[lb]{\smash{$S$}}}%
    \put(0.13470371,0.29023207){\color[rgb]{0,0,0}\makebox(0,0)[lb]{\smash{$\Gamma^+$}}}%
    \put(0.84621022,0.51081262){\color[rgb]{0,0,0}\makebox(0,0)[lb]{\smash{$\Gamma^-$}}}%
    \put(0.47166586,0.42588649){\color[rgb]{0,0,0}\makebox(0,0)[lb]{\smash{\tiny{$0$}}}}%
  \end{picture}%
\endgroup%

}\qquad\quad
\subfigure[ \label{fig:elica}]%
{
\def\svgwidth{6cm}
\begingroup%
  \makeatletter%
  \providecommand\color[2][]{%
    \errmessage{(Inkscape) Color is used for the text in Inkscape, but the package 'color.sty' is not loaded}%
    \renewcommand\color[2][]{}%
  }%
  \providecommand\transparent[1]{%
    \errmessage{(Inkscape) Transparency is used (non-zero) for the text in Inkscape, but the package 'transparent.sty' is not loaded}%
    \renewcommand\transparent[1]{}%
  }%
  \providecommand\rotatebox[2]{#2}%
  \ifx\svgwidth\undefined%
    \setlength{\unitlength}{332.08882821bp}%
    \ifx\svgscale\undefined%
      \relax%
    \else%
      \setlength{\unitlength}{\unitlength * \real{\svgscale}}%
    \fi%
  \else%
    \setlength{\unitlength}{\svgwidth}%
  \fi%
  \global\let\svgwidth\undefined%
  \global\let\svgscale\undefined%
  \makeatother%
  \begin{picture}(1,0.76449259)%
    \put(0,0){\includegraphics[width=\unitlength]{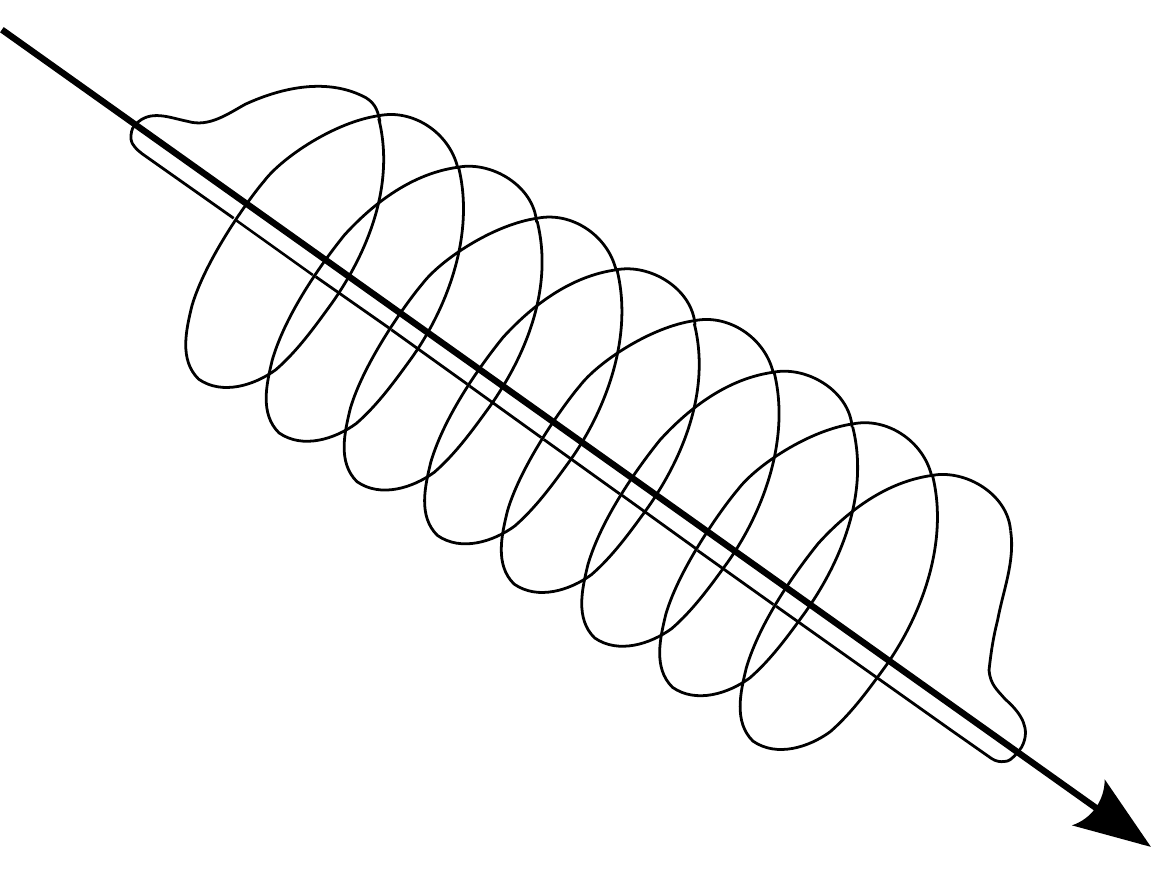}}%
    \put(0.87420255,0.0141658){\color[rgb]{0,0,0}\makebox(0,0)[lb]{\smash{$\R_t$}}}%
    \put(0.10140407,0.69128176){\color[rgb]{0,0,0}\makebox(0,0)[lb]{\smash{$a$}}}%
    \put(0.90600728,0.11435637){\color[rgb]{0,0,0}\makebox(0,0)[lb]{\smash{$b$}}}%
    \put(0.80107775,0.39173709){\color[rgb]{0,0,0}\makebox(0,0)[lb]{\smash{$\Gamma^+$}}}%
    \put(0.10757076,0.53886797){\color[rgb]{0,0,0}\makebox(0,0)[lb]{\smash{$\Gamma^-$}}}%
  \end{picture}%
\endgroup%

}
\caption{\small{(a): an example of $\immmapmin$ with self-intersections 
admitting a semicartesian parametrization. We also plot
 the
intersection of $\immmapmin$
 with the plane $\{\overline{t}=0\}$: this is a non-simple 
curve connecting $(\overline t, 
\gamma^-(\overline{t}))$ and $(\overline{t}, \gamma^+(\overline{t}))$. 
(b): an other analytic curve
$\Gamma$ leading to a $\immmapmin$ admitting a semicartesian parametrization. 
In this case $\gamma^-$ is approximatively constant
in $[a+\delta, b-\delta]$ for some small $\delta>0$, 
so that its graph $\Gamma ^-$ is almost a segment (we cannot
require constancy due to analiticity). 
The graph $\Gamma^+$ of $\gamma^+$ is, instead,  an helix
around $\Gamma^-$. It is clear that 
this situation is very far from the non-parametric
case. The qualitative properties of $\Gamma$ 
in correspondence to the points $a$ and $b$ are not arbitrary,
and will be discussed in detail in the next sections (see also
the assumptions in Theorem \ref{teo:secondo}).}}
\label{fig:intro}   
\end{figure}

Let us now inspect the delicate problem of the  existence of 
a domain $D \subset \R^2_{(t,s)}$ and 
a semicartesian
parametrization $X: D \to \R^3$. 
Besides the non-parametric case, 
in this paper we exhibit other sufficient conditions 
for the existence of a semicartesian parametrization,
and 
we refer to Theorem \ref{prop:analytic} for all details.

\begin{Theorem}\label{teo:secondo}
Suppose that $\Gamma$ admits a parametrization which is analytic, and nondegenerate 
in the sense of \eqref{eq:nondege} at the junctions
between $\gamma^-$ and $\gamma^+$. 
Then $\Sigma_{\min}$ admits a semicartesian parametrization. 
\end{Theorem}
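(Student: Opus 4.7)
My approach is to start from a disk-type conformal solution of the Plateau problem for $\Gamma$ and show that the first coordinate of this parametrization is essentially a Morse function with the bare minimum of critical points allowed by the topology, so that it can be used as the first coordinate of a global reparametrization. More precisely, let $Y=(Y_1,Y_2,Y_3):\odisk\to\immmapmin$ be a conformal solution of Plateau's problem for $\Gamma$ furnished by the classical Douglas--Rad\'o theory. Using the three-parameter conformal group of the disk, I normalize so that the two ``corner'' points of $\Gamma$ (the preimages in $\partial B$ of the junctions $(a,\gammam(a))=(a,\gammap(a))$ and $(b,\gammam(b))=(b,\gammap(b))$) are sent to $\pm1$, with the upper open semicircle parametrizing $\Gammap$ and the lower one parametrizing $\Gammam$. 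In this way, the restriction $Y_1|_{\partial B}$ is a continuous function on $\partial B$ whose only critical values are $a$ (attained at $-1$) and $b$ (attained at $+1$), with monotone behavior on each open semicircle.

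The first task is to upgrade the regularity of $Y$. Away from the corners $\pm1$, the analyticity hypothesis on $\Gamma$ together with the boundary regularity theory for parametric minimal surfaces (Hildebrandt--Nitsche, cf.\ \cite{DiHiSa:10}) gives that $Y$ extends analytically across $\partial B\setminus\{\pm1\}$. Near the two corners, I would invoke the nondegeneracy assumption \eqref{eq:nondege} to obtain an asymptotic expansion of $Y$ (and hence of the holomorphic function $f(z):=\partial_z Y_1$) controlling its vanishing order, in the spirit of classical work on branch-point/corner behavior of minimal surfaces with piecewise analytic boundary.

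The key step, which I expect to be the main obstacle, is to show that $Y_1$ has no critical points in the open disk $B$. Equivalently, the holomorphic function $f=\partial_z Y_1$ should have no zeros in $B$. I would attack this in two complementary ways. The \emph{analytic} route is an argument-principle computation: on $\partial B\setminus\{\pm1\}$ the function $f$ is nonzero because $Y_1|_{\partial B}$ is strictly monotone on each open semicircle, so the number of interior zeros equals the winding number of $f$ along a contour close to $\partial B$; the asymptotic behavior at $\pm1$ obtained from nondegeneracy fixes the contribution of small arcs around the corners, and the resulting count should give zero interior zeros. The \emph{topological} route is Morse-theoretic: since $Y_1|_{\partial B}$ has exactly one maximum and one minimum, every level set $Y_1^{-1}(t_0)$, $t_0\in(a,b)$, meets $\partial B$ at exactly two points (one per semicircle); if an interior saddle existed, the corresponding critical level would split into more than one component, producing level sets meeting $\partial B$ in more than two points nearby, a contradiction. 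Either of these routes, used in conjunction with analyticity (which prevents the accumulation of critical points), should close step~3.

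Once $\grad Y_1$ is nonzero on $B$, I would choose a single-valued harmonic conjugate $Y_1^\ast$ on the simply connected domain $B$ and set $\Phi:=(Y_1,Y_1^\ast):B\to\R^2_{(t,s)}$. By construction $\Phi$ is holomorphic with nowhere-vanishing derivative; injectivity follows from the fact that the fibres of $Y_1$ are connected simple arcs joining the two semicircles (from the Morse analysis above) and that $s\mapsto Y_1^\ast$ is strictly monotone along each such fibre because $\partial_s Y_1^\ast=\partial_t Y_1\neq 0$ by the Cauchy--Riemann equations. Therefore $\Phi$ is a conformal diffeomorphism onto a plane domain $D\subset\R^2_{(t,s)}$, and the composition $X:=Y\circ\Phi^{-1}:D\to\R^3$ is a conformal parametrization of $\immmapmin$ satisfying $X_1(t,s)=t$, i.e., a semicartesian parametrization in the sense of Definition \ref{def:semicart_par}. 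The behavior of $D$ near the images of $\pm1$ (and hence the shape of $D$ at $t=a,b$) is dictated precisely by the asymptotic analysis at the corners, connecting the hypotheses of the theorem with the qualitative picture suggested by Figure \ref{fig:intro}.
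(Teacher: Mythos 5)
Your strategy shares the paper's central idea---show that the first coordinate $Y_1$ of a conformal Douglas--Rad\'o parametrization of $\immmapmin$ has no critical points on $\overline\disk$ except at the two preimages of the junctions, and then convert this transversality into a global semicartesian parametrization---but both halves are approached by a genuinely different route. For the interior critical-point count, the paper first perturbs by a small rotation (via Sard's lemma) to make the height function Morse, applies the Morse relations for manifolds with boundary (Theorem \ref{teo:mors}), and then argues in {\tt step 6} that the rotation was actually unnecessary; your argument-principle count (or your level-set argument) works directly with zeros of the holomorphic function $f=\partial_z Y_1$ counted with multiplicity, so no genericity or perturb-then-unperturb maneuver is needed. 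For the globalization, the paper invokes the local reparametrization result (Theorem \ref{theo:trasv}) and transports it along chains of overlapping charts using simple connectedness; you instead pass to the harmonic conjugate $Y_1^\ast$ and set $X=Y\circ(Y_1,Y_1^\ast)^{-1}$, which is shorter and makes the conformality of $X$ manifest.

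There is, however, a genuine gap at the two poles, and it is precisely the most delicate point in either proof. Your contour/level-set argument needs $\grad Y_1\neq 0$ at the junction preimages: otherwise $f$ vanishes on $\partial\disk$ and the winding-number computation (and likewise the level-set count) must be redone around the bad points. The nondegeneracy \eqref{eq:nondege} is a hypothesis on the parametrization $\parabordo$ of $\Gamma$; it controls the tangential second derivative of $Y_1|_{\partial\disk}$ at the poles, but says nothing directly about the \emph{normal} derivative of $Y_1$ there. The paper supplies this in {\tt step 2} of the proof of Theorem \ref{lem:piani} via a convex-hull (wedge) argument; an alternative (and faster) route compatible with your scheme is the Hopf boundary-point lemma applied to the harmonic function $Y_1$ at its boundary max/min, but either way this step cannot be dismissed as ``asymptotics from nondegeneracy.'' Two further items you should make explicit: (i) the nonvanishing of $f$ on the open semicircles requires \emph{strict} monotonicity of $Y_1|_{\partial\disk}$, which rests on the absence of boundary branch points (Theorem \ref{teo:noboundarybp}), again tied to the analyticity of $\Gamma$; and (ii) Definition \ref{def:semicart_par} demands that the image $D$ of your map $(Y_1,Y_1^\ast)$ be bounded by graphs of locally Lipschitz functions $\sigma^\pm$ with the prescribed behavior at the tips---your construction delivers an open plane domain, but the analogue of Lemma \ref{lemma:regularity_boundary} and Proposition \ref{prop:domain_glob_par} still needs to be proved before the conclusion reads off.
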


Before commenting on
 the proof,
which represent maybe the most technical part of the present paper,
 we want to briefly discuss Figure 
\ref{fig:manuintro}, since  it is a sort of prototypical example
in our work.
The boundary of the represented surface satisfies all hypotheses of
Theorem \ref{teo:secondo}. It is built as the union of two graphs of two analytic maps
$\gamma^\pm: \ab\to \R^2_{(\xi,\eta)}$.
We take the graph of $\gamma^-$ as the (planar) 
half-circle starting from the south pole $S$
and ending at the north pole $N$.
The graph of $\gamma^+$
is the remaining part of the boundary. 
Clearly
$\gamma^-$ and $\gamma^+$ {\it join 
in an analytic way}. 
We stress that  for 
$\overline{t}\in (a,b)$ the intersection of the plane $\{t=\overline{t}\}$ with $\Gamma$
is just the set of points $\{ (\overline{t},\gamma^-(\overline{t})),(\overline{t},\gamma^+(\overline{t}))\}$,
while the intersection with the surface $\immmapmin$ is a connected, possibly non-simple, 
curve\footnote{The surface in \cite[Figure 8.1.2]{Mor:88} 
mentioned in footnote (10) does not satisfy this property.}.
Moreover, near the two poles, $\Gamma$ is essentially a circumference, and
this implies, as we shall see later 
({\tt step 4} in the proof of Theorem \ref{lem:piani})
that the nondegeneracy  assumption
mentioned in the statement of Theorem \ref{teo:secondo}
is satisfied.

The analiticity of $\Gamma$ in Theorem \ref{teo:secondo}
is a strong assumption: indeed it forces
$\mappa$ to have a rather rigid structure, 
in particular near the crack tips,
and it also implies that 
the traces $\mappa^- $ and $\mappa^+$ cannot be independent. 
As we shall clarify below, 
the reason for which we require  analyticity is that we need
to exclude of branch points and 
boundary branch points on $\immmapmin$.
Finding sufficient conditions
on $\Gamma$ ensuring the existence of a semicartesion
parametrization of $\Sigma_{\min}$, without assuming
analyticity, requires further investigation. 

Roughly speaking, the proof of Theorem \ref{teo:secondo}
runs as follows. 
First we need  to guarantee that
no  plane orthogonal to the $t-$axis 
is tangent to  $\immmapmin$ since,
under this transversality condition,
a classical result provides
a {\it local} semicartesian parametrization (Theorems \ref{lem:piani}
and \ref{theo:trasv}).
Let us
 consider a conformal parametrization $Y$ of $\immmapmin$ defined 
on the unit disk $B$; thanks to the analyticity of $\Gamma$, it is possible 
to extend $\immmapmin$ to a minimal surface $\extimmmapmin$, 
parametrized on $\discoest$, an open set containing $B$,
 by
an analytic map $\extmapmin = (\extmapmin_1,\extmapmin_2,\extmapmin_3)$ coinciding with $Y$ on $B$.
Now we define a {\it height} function $h$, defined on $\discoest$
and returning for each point $(u,v)$ the $t-$coordinate of its image
through $\extmapmin$, that is
\begin{displaymath}
\begin{split}
h: \discoest & \to  \R_t ,\\
h(u,v) & := \extmapmin_1(u,v).
\end{split}
\end{displaymath}  
We now observe that the tangent plane to $\extimmmapmin$ 
at $\extmapmin (u,v)$ is orthogonal to the $t-$axis if and only if
$(u,v)$ is a critical point for $h$.
Thus in order to get the desired transversality property,
we need to exclude the presence of critical points of $h$ on $\overline{B}$,
except for a minimum and a maximum on $\partial B$, which exist since
$h$ is continuous. 
Internal maxima and minima are excluded by a geometric argument, and
saddle points are excluded by using a Morse relation 
for closed domains (see Appendix \ref{sec:appb}). In this step,
proven in Theorem \ref{lem:piani}, 
the analiticity of $\Gamma$ is once more crucial,
because it prevents $\immmapmin$ to have boundary or internal branch 
points; this regularity and the nondegeneracy hypotheses on
the parametrization of $\Gamma$
imply that $h$ is a Morse function satisfying the
requirements of Theorem \ref{teo:mors}.

In this way we have obtained 
the existence of a \textit{local} semicartesian parametrization.
Using 
 the simple connectedness of 
$\immmapmin$, it is finally possible to globalize the argument,
and 
provide a semicartesian parametrization (Section \ref{sec:global}).
We notice here that several properties of the (a priori
unknown) parameter domain $D$ can be proven, as shown in 
Section \ref{sec:domshape}: in particular, it turns out that 
$\partial D$ is union of the graphs of two functions $\sigma^\pm$, 
which are locally Lipschitz
(but not Lipschitz) with a local Lipschitz constant 
controlled by the Lipschitz constant of $\gamma^\pm$. 
We refer to Section \ref{sec:par} for the details of the proofs,
but it is clear  that the analyticity assumption
is fundamental in most of the arguments.

\bigskip 
The plan of the paper is the following.
In Section \ref{sec:notation} we fix some notation and we introduce
the space $\DM$ (some properties of which are given in Section \ref{sec:appdomain}). 
We also give the definition of semicartesian parametrization.
In Section \ref{sec:graph} we prove Theorem \ref{teo:primo} for maps whose associated
Plateau's problem admits a non parametric solution.
In Section \ref{sec:general} we provide a generalization of this result
for possibly self-intersecting area-minimizing  surfaces,
underlying that what is really important is that the solution of the Plateau's problem
admits a semicartesian parametrization.
In Section \ref{sec:semicart} we give
some sufficient conditions on $\mappa$ for the existence of a semicartesian parametrization
of $\immmapmin$, see Theorem \ref{prop:analytic},
the proof of which is given in Section \ref{sec:par} and is 
the most technical part of the paper.
In Sections \ref{sec:app} and \ref{sec:appb} we collect some classical 
results of minimal surfaces and Morse Theory needed in our proofs.

\section{Notation}\label{sec:notation}
If $n 
\geq 2$, we denote by $\cdot, \vert \cdot \vert$ the euclidean scalar 
product and norm in $\R^n$, respectively, and by $\overline E$ and ${\rm 
int}(E)$ the closure and the interior part of a set $E\subseteq \R^n$. 
$\mathcal H^2$ is the Hausdorff measure in $\R^n$ and $\mathcal{L}^2$ is the Lebesgue measure in $\R^2$.
$B\subset \R^2=\R^2_{(\parahilduno,\parahilddue)}$ is the open unit disk and  
$\partial \disk$ is its boundary. 
We choose an arc-length parametrization 
\begin{equation}\label{eq:bordo_palla}
\parabordopalla: 
\theta \in [0,2\pi)\to \parabordopalla(\theta) \in \partial \disk,
\end{equation} 
and take $\theta_{\rm s}, \theta_{\rm n}
 \in [0,2\pi)$, with $\theta_{\rm s} < \theta_{\rm n}$, so that
$$
\parabordopalla(\theta_{\rm s}) = (0,-1), \qquad 
\parabordopalla(\theta_{\rm n}) = (0,1).
$$
For a differentiable map $Y : B \to \R^3$,
the components are denoted by $Y = (Y_1, Y_2, Y_3)$, and the 
partial derivatives by $Y_\phu = \partial_\phu Y=
(\partial_\phu Y_{1}, \partial_\phu Y_{2}, \partial_\phu Y_{3})
$ 
and $Y_\phd = \partial_\phd Y 
=
(\partial_\phd Y_{1}, \partial_\phd Y_{2}, \partial_\phd Y_{3})$.

\smallskip

$\Omega$ is a bounded open subset of the source space
$\source$, while 
the target space is denoted by $\target$. 
When no confusion is possible, we often write $\R^2$ in place of
the source or of the target space.

As in the introduction, if $\mappav \in \BV (\Om;\R^2)$ we denote
by $\grad \mappav$ and $D^s \mappav$ the absolutely continuous and 
the singular part of the distributional 
gradient of $\mappav$, respectively.  

With $\DM$ we denote the subset of $\BV(\Om;\R^2)$ on which 
the relaxed area functional admits the following integral representation:
\begin{equation}\label{eq:space_D}
\rel(\mappav, \Om)= \int_\Om |\M\left(\grad \mappav \right)|\,dx\,dy<+\infty.
\end{equation}
As we have already noticed in the introduction, $W^{1,p}(\Om;\R^2)$ is contained in $\DM$ 
for every $p\in [2,+\infty]$. 
In Appendix \ref{sec:appdomain} we report the characterization of $\DM$  
given in \cite{AcDa:94} and we prove that the functional 
$\rel$ can be obtained also by relaxing from $\DM$. 

\smallskip

We now give the useful definition
of \textit{semicartesian parametrization}.

\begin{Definition}[\textbf {Union of two graphs}]\label{def:semicart_curve}\textup{
A closed simple rectifiable curve 
$\Gamma\subset\R^3=\R_t \times \R^2_{(\xi,\eta)}$, is said to be 
 \textit{union of two graphs} 
if there exists an interval $\ab\subset \R_t$ such that
$\Gamma$ is the union of the graphs of two continuous maps 
$\gamma^\pm\in\C(\ab;\R^2)\cap{\rm Lip}_{\rm loc}((a,b);\R^2)$.
That is $\Gamma=\Gamma^+\cup\Gamma^-$ where
\begin{displaymath}
\Gamma^\pm=\{(t,\xi,\eta): t\in \ab, (\xi, \eta)= \gamma^\pm(t)\}.
\end{displaymath}
When necessary, we shall say that $\Gamma$ is union of the graphs of $\gamma^\pm$.
}\end{Definition}

\begin{Definition}[\textbf{Semicartesian parametrization}]\label{def:semicart_par}
\textup{
A disk-type surface $\Sigma$ in $\R^3$ (possibly with self intersections) is said to admit
a \textit{semicartesian parametrization} if $\Sigma=X(\dommap)$, where
\begin{itemize}
\item[-] 
$\dommap \subset \R^2_{(t,s)}$ is given by
\begin{equation}\label{eq:formadom}
\dommap=\{(\ppa,\spa):  t\in\ab,\, \sigma^-(t)\leq s \leq\sigma^+(t)\},
\end{equation}
with $\sigma^\pm\in {\rm Lip}_{\rm loc}((a,b))$ satisfying
\begin{equation}\label{eq:sigmapm}
\begin{split}
\sigma^-(a)&=0=\sigma^+(a),
\\
\sigma^-(b)&=\sigma^+(b),
\\
\sigma^-&<\sigma^+ \mbox{ in }(a,b);
\end{split}
\end{equation}
\item[-] $X\in W^{1,2}(\dommap;\R^3)$ has the following form: 
\begin{equation}\label{eq:good_par}
X(t,s)=(t,X_2(\ppa,\spa),X_3(\ppa,\spa)) \quad {\rm a.e.}~ (t,s)\in \dommap.
\end{equation}
\end{itemize}
}
\end{Definition}
Sometimes we refer to a semicartesian parametrization
as to a global semicartesian parametrization; on the other
hand, a local semicartesian parametrization is a $W^{1,2}$ map of 
the form \eqref{eq:good_par}, defined in a neighourhood of
a point. 

\section{Non-parametric case: graph over a convex domain}\label{sec:graph}
As explained in the introduction,
our aim is
 to estimate from above the area of the graph of a di\-scon\-ti\-nuous map 
with a curve discontinuity compactly contained in $\Om$. 
In this section we study a case which leads to consider a non-parametric
Plateau's problem over a convex domain.

\subsection{Hypotheses on $\mappa$  and statement for the non-parametric case}\label{hypstate}
Let $\Om \subset \R^2=\source$ be a bounded open set  and assume that 
$$
\mappa = (\mappauno, \mappadue) : \Om \to \R^2 = \target
$$
satisfies the following properties $({\rm u}1)-({\rm u}4)$:
\begin{itemize}
\item[({\rm u}1)] $\mappa \in \BVo\cap L^\infty(\Omega; \R^2)$ 
and $\osaltou$ is a
non-empty \textit{simple} curve of class $\mathcal C^2$ (not reduced to a point)
contained in $\Omega$. We shall write
$$
\osaltou = \alpha(\ab),
$$
where
$a$ and $b$ are two real numbers
with $a<b$, and 
$$ 
\alpha : \ppa
\in \ab \subset \R = \R_\ppa \to \alpha(\ppa) \in  
\osaltou
$$ 
is an {\it arc-length} 
parametrization of $\osaltou$ of class $\mathcal C^2$. Note that we are assuming that 
if $t_1, t_2 \in \ab$, $t_1 \neq t_2$ then $\alpha(t_1) \neq 
\alpha(t_2)$, and moreover
\begin{equation*}\label{eq:nonintersbound}
\osaltou \cap \partial \Om = \emptyset.
\end{equation*}
In particular, the two distinct 
crack tips are $\osalto \setminus \saltou = \{\alpha(a), \alpha(b)\} 
\subset \Omega$ (see Figure \ref{fig:Om}).
\item[({\rm u}2)] $\mappa \in W^{1,\infty}\left(\Om\setminus\osaltou;\R^2\right)$;  
by the Sobolev embeddings (see for example \cite[Theorem 4.12]{Ad:40})
we have  $\mappa \in \C\left(\Om\setminus\osaltou;\R^2\right)$.
\end{itemize}
As a consequence of ({\rm u}1) and ({\rm u}2), 
there exists the trace of $\mappa$ on $\osaltou$ on each side of the jump:
\begin{equation*}
\begin{aligned}
\gammam(t) & = \gammam[\mappa](t) = (\gammamu(t), \gammamd(t)) := \mappa^-(\alpha(\ppa)) \in \R^2, 
\\
\gammap(t)  &= \gammap[\mappa](t) = (\gammapu(t), \gammapd(t)) := \mappa^+(\alpha(\ppa)) \in \R^2, 
\end{aligned}
\qquad \ppa \in [a,b],
\end{equation*}
and the functions
$$
t\in\ab \longrightarrow\gammapm(t)
$$
are H\"older continuous\footnote{
Indeed $W^{1,\infty} (U) 
\subset \C^{0, \lambda}(\overline U)$ for every $\lambda \in(0,1)$ if $U
\subset \R^2$ is  
a smooth  enough open set (see again \cite[Theorem 4.12]{Ad:40}), and we can consider,
 for each trace, a sufficiently smooth open set $U \subset \Omega$ 
such that $\osaltou \subset \partial U$.}.
 
Notice that 
\begin{equation}\label{eq:estremi}
\gammam(a)=\gammap(a), \qquad \gammam(b)=\gammap(b).
\end{equation}
\begin{itemize}
\item[({\rm u}3)] 
$\gammapm \in {\rm Lip}(\ab;\R^2)$ and 
there exists  a finite set of 
points $t_0:=a < t_1<\dots<t_m<t_{m+1}=b$ of $\ab$
such that $\gammapm \in \mathcal 
C^2((t_i, t_{i+1}))\cap
\C^1([t_i, t_{i+1}])$ 
for any 
$i=0,\dots,m$. Moreover we require
\begin{equation}\label{eq:interni}
\gamma^-(t) \neq \gamma^+(t), \qquad
t \in (a,b).
\end{equation}
\end{itemize}
\begin{figure}[htbp]
\centering%
\subfigure [\label{fig:Om}]%
{
\def\svgwidth{6.5cm}
\begingroup%
  \makeatletter%
  \providecommand\color[2][]{%
    \errmessage{(Inkscape) Color is used for the text in Inkscape, but the package 'color.sty' is not loaded}%
    \renewcommand\color[2][]{}%
  }%
  \providecommand\transparent[1]{%
    \errmessage{(Inkscape) Transparency is used (non-zero) for the text in Inkscape, but the package 'transparent.sty' is not loaded}%
    \renewcommand\transparent[1]{}%
  }%
  \providecommand\rotatebox[2]{#2}%
  \ifx\svgwidth\undefined%
    \setlength{\unitlength}{442.98661923bp}%
    \ifx\svgscale\undefined%
      \relax%
    \else%
      \setlength{\unitlength}{\unitlength * \real{\svgscale}}%
    \fi%
  \else%
    \setlength{\unitlength}{\svgwidth}%
  \fi%
  \global\let\svgwidth\undefined%
  \global\let\svgscale\undefined%
  \makeatother%
  \begin{picture}(1,0.7514599)%
    \put(0,0){\includegraphics[width=\unitlength]{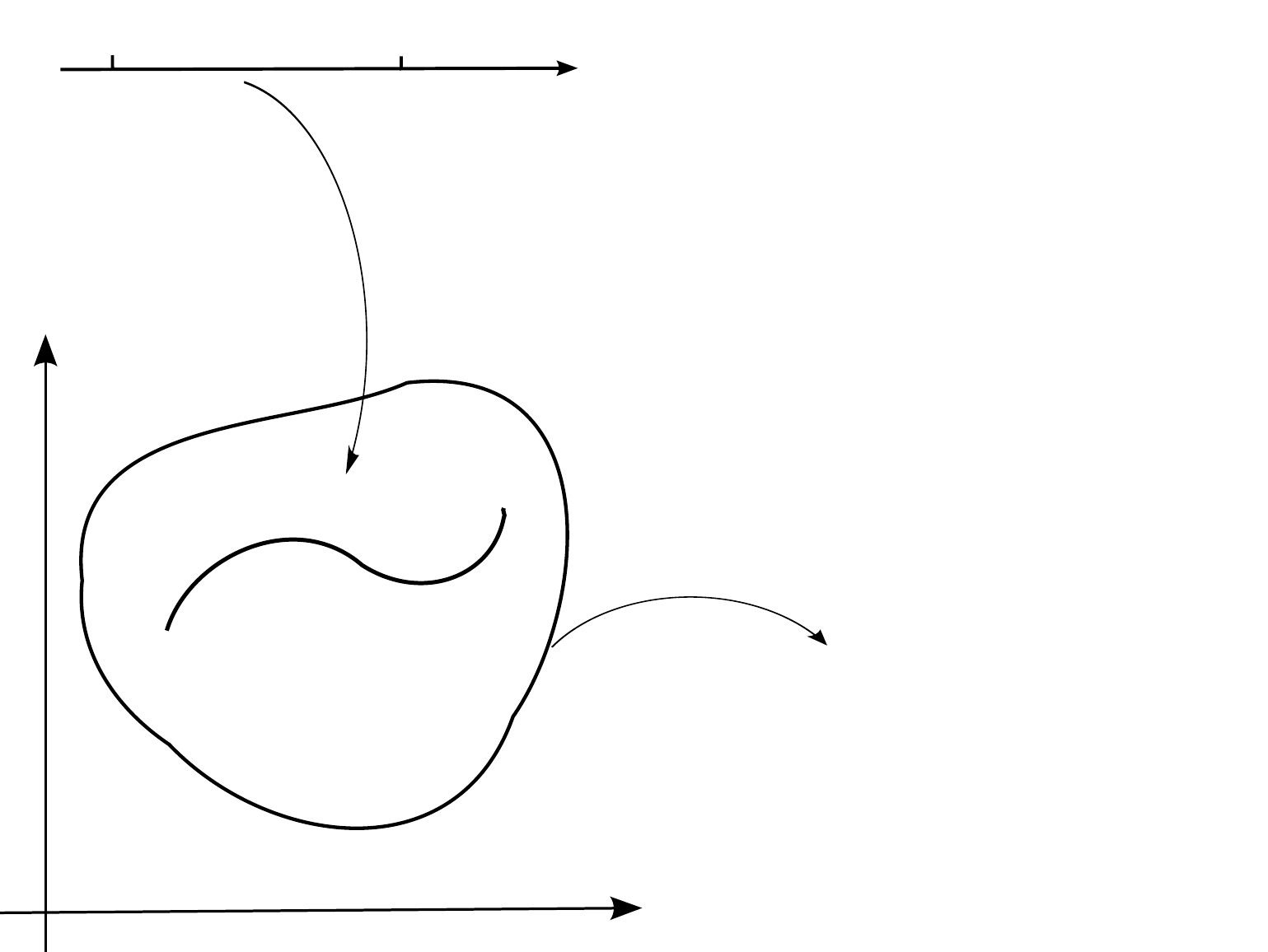}}%
    \put(0.29898726,0.55928292){\color[rgb]{0,0,0}\makebox(0,0)[lb]{\smash{$\alpha$}}}%
    \put(0.30940215,0.31993892){\color[rgb]{0,0,0}\makebox(0,0)[lb]{\smash{$J_\mappa$}}}%
    \put(0.09334493,0.12028837){\color[rgb]{0,0,0}\makebox(0,0)[lb]{\smash{$\Om$}}}%
    \put(0.41717187,0.08884202){\color[rgb]{0,0,0}\makebox(0,0)[lb]{\smash{$\R^2_{(x,y)}$}}}%
    \put(0.66307451,0.23226659){\color[rgb]{0,0,0}\makebox(0,0)[lb]{\smash{$\R^2_{(\xi,\eta)}$}}}%
    \put(0.51531358,0.29275849){\color[rgb]{0,0,0}\makebox(0,0)[lb]{\smash{$\mappa$}}}%
    \put(0.08882608,0.71332076){\color[rgb]{0,0,0}\makebox(0,0)[lb]{\smash{$a$}}}%
    \put(0.29684908,0.71348486){\color[rgb]{0,0,0}\makebox(0,0)[lb]{\smash{$b$}}}%
    \put(0.38746213,0.71451583){\color[rgb]{0,0,0}\makebox(0,0)[lb]{\smash{$\R_{\ppa}$}}}%
  \end{picture}%
\endgroup%

}\qquad
\subfigure[ \label{fig:Sigma}]%
{
\def\svgwidth{6cm}
\begingroup%
  \makeatletter%
  \providecommand\color[2][]{%
    \errmessage{(Inkscape) Color is used for the text in Inkscape, but the package 'color.sty' is not loaded}%
    \renewcommand\color[2][]{}%
  }%
  \providecommand\transparent[1]{%
    \errmessage{(Inkscape) Transparency is used (non-zero) for the text in Inkscape, but the package 'transparent.sty' is not loaded}%
    \renewcommand\transparent[1]{}%
  }%
  \providecommand\rotatebox[2]{#2}%
  \ifx\svgwidth\undefined%
    \setlength{\unitlength}{299.35140943bp}%
    \ifx\svgscale\undefined%
      \relax%
    \else%
      \setlength{\unitlength}{\unitlength * \real{\svgscale}}%
    \fi%
  \else%
    \setlength{\unitlength}{\svgwidth}%
  \fi%
  \global\let\svgwidth\undefined%
  \global\let\svgscale\undefined%
  \makeatother%
  \begin{picture}(1,1.08739944)%
    \put(0,0){\includegraphics[width=\unitlength]{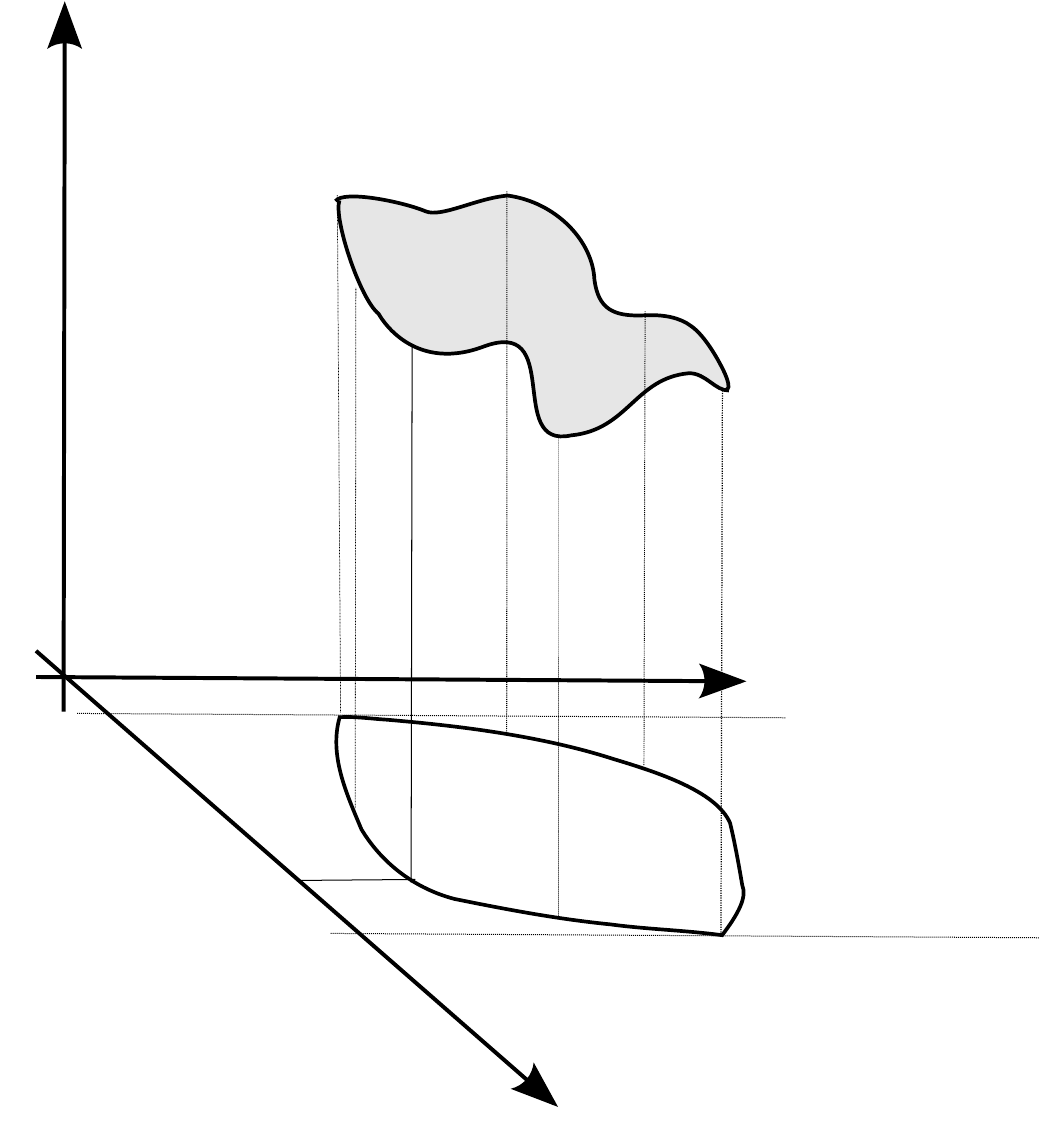}}%
    \put(0.44260097,0.27173934){\color[rgb]{0,0,0}\makebox(0,0)[lb]{\smash{$K$}}}%
    \put(0.48774735,0.76272637){\color[rgb]{0,0,0}\makebox(0,0)[lb]{\smash{$\immmapmin$}}}%
    \put(0.2396112,0.82554525){\color[rgb]{0,0,0}\makebox(0,0)[lb]{\smash{$\Gamma^-$}}}%
    \put(0.61384334,0.81785465){\color[rgb]{0,0,0}\makebox(0,0)[lb]{\smash{$\Gamma^+$}}}%
    \put(-0.00270633,1.04114598){\color[rgb]{0,0,0}\makebox(0,0)[lb]{\smash{$\eta$}}}%
    \put(0.71991761,0.44290142){\color[rgb]{0,0,0}\makebox(0,0)[lb]{\smash{$\xi$}}}%
    \put(0.05494419,0.35273123){\color[rgb]{0,0,0}\makebox(0,0)[lb]{\smash{$a$}}}%
    \put(0.297565,0.14119281){\color[rgb]{0,0,0}\makebox(0,0)[lb]{\smash{$b$}}}%
    \put(0.45026326,0.0067753){\color[rgb]{0,0,0}\makebox(0,0)[lb]{\smash{$t$}}}%
    \put(0.23435364,0.20243725){\color[rgb]{0,0,0}\makebox(0,0)[lb]{\smash{$\overline{t}$}}}%
    \put(0.26716226,0.25745068){\color[rgb]{0,0,0}\makebox(0,0)[lb]{\smash{\tiny{$\gamma_1(\overline{t})$}}}}%
    \put(0.41386677,0.58596935){\color[rgb]{0,0,0}\rotatebox{-89.33803781}{\makebox(0,0)[lb]{\smash{\tiny{$\gamma_2(\overline{t})$}}}}}%
  \end{picture}%
\endgroup%

}
\caption{\small{(a): the domain $\Om$, 
the arc-length parametrization of the jump of the map $\mappa$. Notice 
that the closure of the jump is contained in $\Om$.
(b): the Lipschitz curve $\Gamma$, union of the graphs on $\ab$ of the vector valued functions 
$\gamma^-$ and $\gamma^+$. $K$ is a closed convex set in $\R^2_{(t,\xi)}$, having non empty interior, 
and $\immmapmin$ is the area-minimizing surface spanning $\Gamma$. We observe that 
$\partial K$ is not differentiable at $(a,\gamma_1^+(a))$ and $(b,\gamma_1^+(b))$, and $\Gamma$ is not differentiable 
at $(a, \gamma^+(a))$, $(b,\gamma^+(b))$.
}}    
\end{figure}
In order to state our last assumption $({\rm u}4)$, 
we denote by $\Gamma^\pm = \Gamma^\pm[\mappa]$ 
the graphs of the maps $\gammapm$, 
$$
\begin{aligned}
\Gamma^-  = \Gamma^-[\mappa]& := \{(\ppa, 
\primacoordtarget, \secondacoordtarget) \in \ab \times 
\R^2 : 
(\primacoordtarget, \secondacoordtarget) = 
\gamma^- (\ppa)\},
\\
\Gamma^+ = \Gamma^+[\mappa]& := 
\{(\ppa, \primacoordtarget, \secondacoordtarget) \in \ab \times \R^2 : 
(\primacoordtarget, \secondacoordtarget) = 
\gamma^+(\ppa)\},
\end{aligned}
$$
and we set
\begin{equation}\label{eq:Gammau}
\Gamma = \Gamma[\mappa]:=\Gamma^- \cup\Gamma^+.
\end{equation}
In view of assumptions ({\rm u}2) and ({\rm u}3), $\Gamma\subset \R^3$
is a closed, simple, Lipschitz and piecewise $\C^2$ curve obtained as union
of two curves; 
moreover $(a,\gamma^+(a))$ and $(b,\gamma^+(b))$ (coinciding with  
$(a,\gamma^-(a))$ and $(b,\gamma^-(b))$ respectively) are {\it nondifferentiability 
points} of $\Gamma$.
The next assumption requires introducing the projection on a plane
spanned by $t$ and one of the two coordinates, say $\xi$,
 in the target space
$\target$.
We suppose that:
\begin{itemize}
 \item[({\rm u}4)] the orthogonal projection of $\Gamma$ on the plane 
 $\R^2_{(t,\primacoordtarget)}$ is the boundary of a closed convex set 
 $\K$ with non-empty interior. 
In particular, without loss of generality, 
$$
\gammamu(t)<\gammapu(t)\, , \quad t\in(a,b),
$$
and we assume that $\gammamu$ is convex and $\gammapu$ is concave. 
Moreover thanks to hypothesis ({\rm u}3), 
$$
\gammapm_1 \in {\rm Lip}(\ab)
$$
and therefore $(a,\gammamu(a))$ and $(b, \gammamu(b))$ 
are nondifferentiability points of $\partial \K$. 
\end{itemize} 

Summarizing, 
$\partial K = {\rm graph}(\gamma_1^-) \cup\,
{\rm graph}(\gamma_1^+)$ is of class $\mathcal C^1$ up 
to a finite set of points containing
 $(a,\gammamu(a))$ and  $(b, \gammamu(b))$. In particular,
$\partial K$ is {\it not} of class $\mathcal C^2$.

\medskip
\begin{Remark}\label{rem:angoli}\rm
The hypothesis that $\Gamma$ has corners
in 
$(a,\gamma^-(a))$ and $(b, \gamma^-(b))$
is related to the regularity assumptions made on
$\mappa$ in $({\rm u}2)$: requiring 
that $\Gamma$ is differentiable at 
$\left(a,\gamma^-(a)\right)$ and $\left(b, \gamma^-(b)\right)$ would {\it prevent} 
$\mappa$
to belong to $W^{1,\infty}\left(\Omega \setminus \osalto; \R^2\right)$. 
On the other hand, it is useful to require  
$\mappa \in W^{1,\infty}\left(\Omega \setminus \osalto; \R^2\right)$: 
indeed, in this case, we can infer (see the proof of 
Theorem \ref{teo:graph_main}, for example  {\tt step 8}) that the approximating maps
$\ue$ are Lipschitz and thus in particular that they can be used to estimate $\rel(\mappa,\Om)$.
In Section \ref{sec:general} we manage in weakening this requirement
(compare condition (\hp 2)).
\end{Remark}
%
\smallskip

Before stating our first result, we need the following definition 
(for further details, see Section \ref{sec:app}).

\begin{Definition}\label{defSigmamin}
\textup{We denote by $\immmapmin\subset \R^3=\R_\ppa\times\R^2_{(\xi,\eta)}$ 
an area-minimizing surface of disk-type spanning $\Gamma$, that is the image of the unit disk 
through a solution of the Plateau's problem \eqref{eq:plateau} for $\Gamma$.}
\end{Definition} 
Now we are in a position to state our first theorem.
\begin{Theorem}\label{teo:graph_main}
Suppose that $\mappa$ satisfies assumptions $({\rm u}1)$-$({\rm u}4)$.
Then there exists a sequence 
\begin{equation}\label{uelip}
(\ue)_\eps \subset \rm{Lip}(\Om;\R^2)
\end{equation}
converging to $\mappa$ in $L^1(\Omega; \R^2)$ as $\eps \to 0^+$ 
such that 
\begin{equation}\label{eq:lim_ue}
		\lim_{\eps \to 0^+} \rel(\ue,\Omega)=
\rel(\mappa,\Omega\setminus \osaltou)+
\H^2(\immmapmin)
= 
\int_{\Omega} \vert \mathcal M(\grad \mappa)\vert~dx\,dy + 
\H^2(\immmapmin).
\end{equation}  
In particular 
	\begin{equation}\label{limsup}
		\rel(\mappa,\Omega)
\leq 
\int_{\Omega} \vert \mathcal M(\grad \mappa)\vert~dx\,dy 
+\H^2(\immmapmin).
	\end{equation}  
\end{Theorem}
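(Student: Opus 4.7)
The plan is to build the recovery sequence $(\ue)$ by piecewise pasting three regimes: (i) outside a tubular neighbourhood of $\osaltou$ of width of order $\eps$, simply set $\ue := \mappa$, which already accounts for the absolutely continuous part $\int_\Omega |\M(\grad \mappa)|\,dx\,dy$; (ii) on the thin tube around $\osaltou$, define $\ue$ by \eqref{eq:ueintro} using a semicartesian parametrization $X$ of $\immmapmin$, which in this non-parametric case comes for free by writing $X(t,\xi)=(t,\xi,u(t,\xi))$ on $D=\K$, where $u$ solves the minimal surface equation on $\K$ with Dirichlet boundary data $\gammamd$ on ${\rm graph}(\gammamu)$ and $\gammapd$ on ${\rm graph}(\gammapu)$; (iii) an intermediate annular corridor of width $o(\eps)$ in which $\ue$ is a smooth convex combination that matches $\mappa$ with the tube construction. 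Since $\mappa\in W^{1,\infty}(\Om\setminus\osaltou;\R^2)$ by $({\rm u}2)$, the traces $\gammapm$ agree with the tube values on the sides of the tube, and the interpolation involves Lipschitz data, each $\ue$ belongs to $\mathrm{Lip}(\Om;\R^2)$ as required in \eqref{uelip}.

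For the tube, fix a small $\delta_0>0$ such that on the open set $\{0<\dist((x,y),\osaltou)<\delta_0\}\setminus\{\alpha(a),\alpha(b)\}$ the signed distance $s(x,y)$ and the arc-length parameter $t(x,y)$ of the nearest point on $\osaltou$ are smooth; by $({\rm u}1)$ this is possible away from the crack tips. Set
\[
\Te:=\bigl\{(x,y):(t(x,y),s(x,y)/\eps)\in \K\bigr\},
\]
and define $\ue$ on $\Te$ by \eqref{eq:ueintro}. Because $X_2(t,s)=s$ and $X_3(t,s)=u(t,s)$, on $\partial\Te\cap\{s>0\}$ one has $s/\eps=\gammapu(t)$, so $\ue=\gammap(t)=\mappa^+(\alpha(t))$, and similarly on the other side. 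Outside the slightly enlarged tube $T_{\eps}^{+}:=\{(x,y):(t,s/\eps)\in \K_{\eps}\}$ for a mildly enlarged smoothed convex domain $\K_\eps\supset \K$ (used to absorb the lack of $\C^2$ regularity of $\partial\K$ at its two corner points, so that $u$ and its derivatives extend continuously up to the sides), put $\ue:=\mappa$; on the intermediate annulus $T_\eps^+\setminus\Te$, whose width is $O(\eps)$, interpolate linearly in the $s$-direction between the two expressions. Standard estimates give $|\ue-\mappa|\to 0$ in $L^1(\Om;\R^2)$.

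The key area computation, which I expect to carry out via change of variables $(x,y)\mapsto(t,s)$ and then $s\mapsto\sigma:=s/\eps$, is as follows: on $\Te$, the chain rule gives $\grad \ue_i=(\partial_t X_{i+1})\grad t+\eps^{-1}(\partial_s X_{i+1})\grad s$, and since $|\grad s|=1$, $|\grad t|=1+o(1)$, and $\grad t\cdot\grad s=0$ modulo $o(1)$ as $\eps\to 0^+$, the leading behaviour is
\[
|\grad \ue_i|^2=\eps^{-2}|\partial_s X_{i+1}|^2+O(1),
\]
\[
\partial_x \ue_1\partial_y\ue_2-\partial_y\ue_1\partial_x\ue_2=\eps^{-1}\bigl(\partial_t X_2\partial_s X_3-\partial_s X_2\partial_t X_3\bigr)\bigl(\partial_x t\partial_y s-\partial_y t\partial_x s\bigr)+O(1),
\]
where the last Jacobian equals $\pm 1+o(1)$. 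Hence the area integrand satisfies
\[
|\M(\grad \ue)|=\eps^{-1}\sqrt{|\partial_s X_2|^2+|\partial_s X_3|^2+(\partial_t X_2\partial_s X_3-\partial_s X_2\partial_t X_3)^2}+O(1),
\]
and the change of variables contributes a factor $\eps$, cancelling $\eps^{-1}$; the $O(1)$ remainder integrates to $O(\eps)$ over a tube of area $O(\eps)$. Passing to the limit yields
\[
\lim_{\eps\to 0^+}\A(\ue,\Te)=\int_{\K}\sqrt{1+|\partial_t u|^2+|\partial_s u|^2}\,dt\,ds=\H^2(\immmapmin).
\]
The annular corridor contributes $O(\eps)$ because the interpolation has bounded slope in the "good" directions and gradients of order $1$ (not $1/\eps$), weighted by an $O(\eps)$-area. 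Adding the three contributions gives \eqref{eq:lim_ue}; \eqref{limsup} follows from the definition \eqref{eq:arearilassata} and the $L^1$-lower semicontinuity of $\rel$.

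The main obstacle will be step (iii), namely the construction of the intermediate corridor near the two crack tips $\alpha(a),\alpha(b)$: there the tubular coordinates $(t,s)$ degenerate, and the convex domain $\K$ is only $\C^1$-piecewise (not $\C^2$) at its two corners. The remedy is to approximate $\K$ from outside by a family of convex $\C^2$-smoothed domains $\K_\eps$ whose defects concentrate on a set of arc-length $o(1)$, solve the minimal surface equation on each $\K_\eps$ (existence and stability following from the classical Dirichlet theory and the convexity of $\K_\eps$), and invoke the continuous dependence of the minimal surface area on Lipschitz boundary data to control the resulting error uniformly. A separate local cut-off at small balls of radius $\sqrt\eps$ around $\alpha(a),\alpha(b)$ in which $\ue$ is defined as a Lipschitz extension of the boundary traces then ensures the global Lipschitz regularity in \eqref{uelip} while contributing only $O(\sqrt\eps)$ to the area.
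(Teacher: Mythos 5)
Your overall plan (outer region equals $\mappa$, thin tube driven by a rescaled non-parametric minimal graph, interpolation in between, smoothing of $\K$) follows the paper's strategy in spirit, and the area computation in the tube (leading-order cancellation of $\eps^{-1}$ against the change-of-variables Jacobian, and the identification of the surviving integrand with the area element of $\immmapmin$) is essentially right. However, there are two concrete gaps that would cause the construction as you have written it to fail.

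First, you take the parameter domain to be $\dommap=\K$ with $X(t,\xi)=(t,\xi,u(t,\xi))$, and then set $\ue$ on $\Te=\{(x,y):(t,s/\eps)\in\K\}$ where $s$ is the \emph{signed distance} to $\saltou$. For $\Te$ to be a (two-sided) tubular neighbourhood of $\saltou$, one needs the strip $\{s=0\}$, i.e.\ $(a,b)\times\{0\}$, to lie in the interior of the parameter domain. There is no reason for $\K$ to satisfy this: in general the boundary graphs $\gammamu$ and $\gammapu$ need not straddle the $t$-axis (e.g.\ both could be positive), in which case $\Te$ lies entirely on one side of $\saltou$, and on $\saltou$ itself $\ue$ is given by the outer/intermediate definition and is discontinuous. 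This is exactly why the paper, in {\tt step~4}, replaces $\K$ by the symmetrized domain $\dommap=\{(t,s):|s|\le\sigma(t)\}$ with $\sigma=(\gammapu-\gammamu)/2$, and then uses the shifted map $X_2(t,s)=s+(\gammapu(t)+\gammamu(t))/2$ to recover $\K$; this shift is not optional, it is what makes the inner region wrap around $\saltou$ so that both traces are matched on the right sides. Your proposal omits this step.

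Second, you propose to smooth $\K$ \emph{from the outside}: $\K_\eps\supset\K$. But the Dirichlet data $\gamma_2^\pm$ is only defined on $\partial\K$, and for an outward bump of $\partial\K$ you would have to invent boundary data, after which the restriction of the solution to $\K$ would no longer have the correct traces and the resulting surface would no longer span $\Gamma$. The paper instead smooths \emph{inward} ($\K_\mu\subset\K$, with a $\C^2$ compatible boundary datum $\phi_\mu$ on $\partial\K_\mu$ subject to the quantitative bound \eqref{eq:phimulip}), applies Theorem~\ref{teo:giusticonvesso} on $\K_\mu$ to get global Lipschitz regularity, and then extends the solution linearly from $\K_\mu$ to $\K$ producing $\widehat z_\mu$ with $\widehat z_\mu=z$ on $\partial\K$. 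This inward smoothing plus extension is what produces a Lipschitz semicartesian map on all of $\dommap$ (Remark~\ref{rem:llipbis}, Remark~\ref{rem:semi}), and hence $\ue\in{\rm Lip}(\Om;\R^2)$, without any extra cut-off at the crack tips. Your proposed $\sqrt\eps$-ball cut-off is a device the paper uses only in the weaker parametric setting of Theorem~\ref{teo:general_main}; it is not needed here once the smoothing is done correctly, and as stated it does not repair the two problems above.
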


\subsection{Proof of Theorem \ref{teo:graph_main}}\label{sec:graph_proof}
The proof of Theorem \ref{teo:graph_main} is rather long, and we split it
into several steps.

\smallskip
{\tt Step 1}. Definition of the function $z$ and representation of the surface $\immmapmin$.

Since $\Gamma$ in \eqref{eq:Gammau} admits a convex 
one-to-one
parallel projection, we can apply Theorem \ref{teo:convex_proj}.
In particular, there exists a scalar function $z\in \C(\K)\cap \mathcal C^\omega({\rm int}(\K))$ 
such that 
\begin{displaymath}
\immmapmin=
\big\{(t, \primacoordtarget , \secondacoordtarget) \in \R_t \times \target:
~  (t, \primacoordtarget) \in \K,\, \secondacoordtarget =z(t, \primacoordtarget) \big\}
= {\rm graph}(z),
\end{displaymath}
where $z$ solves 
\begin{equation}\label{system}
\begin{cases}
{\rm div}\left(
\displaystyle \frac{\grad z}{\sqrt{1+|\grad z|^2}}\right)=0 & {\rm in}~ {\rm int}(K),
\\
z=\phi & {\rm on}~ \partial K,
\end{cases}
\end{equation}
and
$$\phi=\gamma_2^\pm \quad \mbox{ on } {\rm graph}(\gamma_1^\pm). $$ 
\begin{Remark}\label{rem:chiarisce}\rm
It is worthwhile to stress  the different role played in \eqref{system} by the 
two components of the traces $\gammapm$
: the {\it first components} $\gamma_1^\pm$ determine the boundary of 
the domain $K$ where solving the non-parametric Plateau's problem, 
the Dirichlet condition of which is given by the {\it second components}
$\gamma_2^\pm$ (see Figure \ref{fig:Sigma}).
\end{Remark}

\begin{Remark}\label{rem:compatraidueapprocci}\rm
$\immmapmin$ is the unique area-minimizing surface
among all graph-like surfaces on ${\rm int}(K)$ 
satisfying  the Dirichlet condition in \eqref{system}
\end{Remark}

Due to the presence of corners in $\partial K$, we cannot
directly infer from Theorem \ref{teo:giusticonvesso} that
 $z \in {\rm Lip}(\overline K)$. Since 
the  Lipschitz regularity of $z$ is strictly related
to the Lipschitz regularity of $\ue$, in order to 
ensure inclusion \eqref{uelip}  a smoothing
argument is required (see Figure \ref{fig:graph_approximation}).

\smallskip

{\tt Step 2}. Smoothing of $\partial K$ and $\gamma_2^\pm$: 
definition of the function $z_\mu$ and of the surface $\immmapmin^\mu$.

Since $\partial K$ has only a finite number 
of nondifferentiability points, we  smoothen the corners of $K$
obtaining, for a suitable $\overline \mu>0$ small enough,
 a  sequence $(\Ke)_{\mu \in (0,\overline \mu)}$ 
of sets with the following properties:
\begin{itemize}
\item[-] each $\Ke$ is convex, closed,  with non-empty interior
and is  contained in $K$. Moreover, $\Ke$ coincides
with $K$ out of the disks  of radius $\mu$
 centered at the nondifferentiability
points of $\partial K$; 
\item [-] $\partial \Ke\in \mathcal C^2$;
\item [-] $\mu_1<\mu_2$ implies $K_{\mu_1} \supset K_{\mu_2}$,
\end{itemize}
see Figure \ref{fig:D_mu}.

In order to apply Theorem \ref{teo:giusticonvesso}, we need not only to smoothen the 
set $K$, but also the Dirichlet condition $\gamma_2^\pm$ at the same time.
Firstly we observe that since both $K$ and $K_\mu$ are convex sets and 
$K_\mu\subset K$, there exist a point $O\in K_\mu$ and a projection 
$\pi_\mu$ acting as follows:
\begin{displaymath}
\begin{split}
\pi_\mu: \partial K_\mu & \to \partial K \\
p & \to \pi_\mu(p),
\end{split}
\end{displaymath}  
where $\pi_\mu(p)$ is the unique point of $\partial K$ lying on the 
half-line rising from $O$ 
and passing through $p$.

Now using this projection and again the fact that $\gamma_2^\pm$ 
are Lipschitz and piecewise $\mathcal C^2$, for every $\mu\in(0,\overline{\mu})$ we can 
define a function $\phi_\mu$ with the following properties:
\begin{itemize}
\item[-]
$\phi_\mu : \dKe \to \R$ is of class $\mathcal C^{2}$; 
\item[-]
$\phi_\mu$ coincides 
with $\gamma^\pm_2$ on $\partial K_\mu \cap \partial K$ 
out of the balls of radius $\mu$ 
centered at the nondifferentiability points 
of $\gamma_2^\pm$;
\item[-] the Hausdorff distance between the graph of $\phi_\mu$ and 
$\Gamma$ is less than $\mu$;
\item[-] there holds
\begin{equation}\label{eq:phimulip}
\frac{ \left| \phi_\mu(p)-\gamma^\pm_2 (\pi_\mu(p)) \right|}{\left|p-\pi_\mu(p)\right|}
\leq C,\qquad p\in \partial K_\mu,
\end{equation}
where $C$ is a positive constant independent of $\mu$.
\end{itemize}

For any $\mu \in (0,\overline \mu)$ let us denote by $z_{\mu}$ the solution to 
\begin{equation*}\label{systemmu}
\begin{cases}
{\rm div}\left(
\displaystyle \frac{\grad z_{\mu}}{\sqrt{1+|\grad z_{\mu}|^2}}\right)=0 & {\rm in}~ {\rm int}(K_\mu),
\\
z_{\mu}=\phi_\mu & {\rm on}~ \partial K_\mu.
\end{cases}
\end{equation*}

Theorem \ref{teo:giusticonvesso} yields
$$
z_{\mu}\in {\rm Lip}(\Ke)\cap \C^\omega({\rm int}(\Ke)).
$$
We denote by
$\Sigma_{\rm min}^{\mu}$ the graph of $z_{\mu}$.
Applying \cite[\textsection 305]{Nit:89}
it follows\footnote{An argument
leading to an equality of the type \eqref{conve} in a nonsmooth situation 
was proved in \cite{BelPao:10}.}
\begin{equation}\label{conve}
\lim_{\mu \to 0^+}
\H^2(\Sigma_{\rm min}^{\mu})= 
\H^2(\immmapmin).
\end{equation}
%

In order to assert that the maps $\mappa_\eps$ in 
{\tt step 6} are Lipschitz
continuous, in particular close to the crack tips of $J_\mappa$,
we need to extend $z_\mu$ to $K$.

\begin{figure}[htbp]
\centering%
\subfigure [\label{fig:D_mu}]%
{
\def\svgwidth{6cm}
\begingroup%
  \makeatletter%
  \providecommand\color[2][]{%
    \errmessage{(Inkscape) Color is used for the text in Inkscape, but the package 'color.sty' is not loaded}%
    \renewcommand\color[2][]{}%
  }%
  \providecommand\transparent[1]{%
    \errmessage{(Inkscape) Transparency is used (non-zero) for the text in Inkscape, but the package 'transparent.sty' is not loaded}%
    \renewcommand\transparent[1]{}%
  }%
  \providecommand\rotatebox[2]{#2}%
  \ifx\svgwidth\undefined%
    \setlength{\unitlength}{366.33907375bp}%
    \ifx\svgscale\undefined%
      \relax%
    \else%
      \setlength{\unitlength}{\unitlength * \real{\svgscale}}%
    \fi%
  \else%
    \setlength{\unitlength}{\svgwidth}%
  \fi%
  \global\let\svgwidth\undefined%
  \global\let\svgscale\undefined%
  \makeatother%
  \begin{picture}(1,0.74268627)%
    \put(0,0){\includegraphics[width=\unitlength]{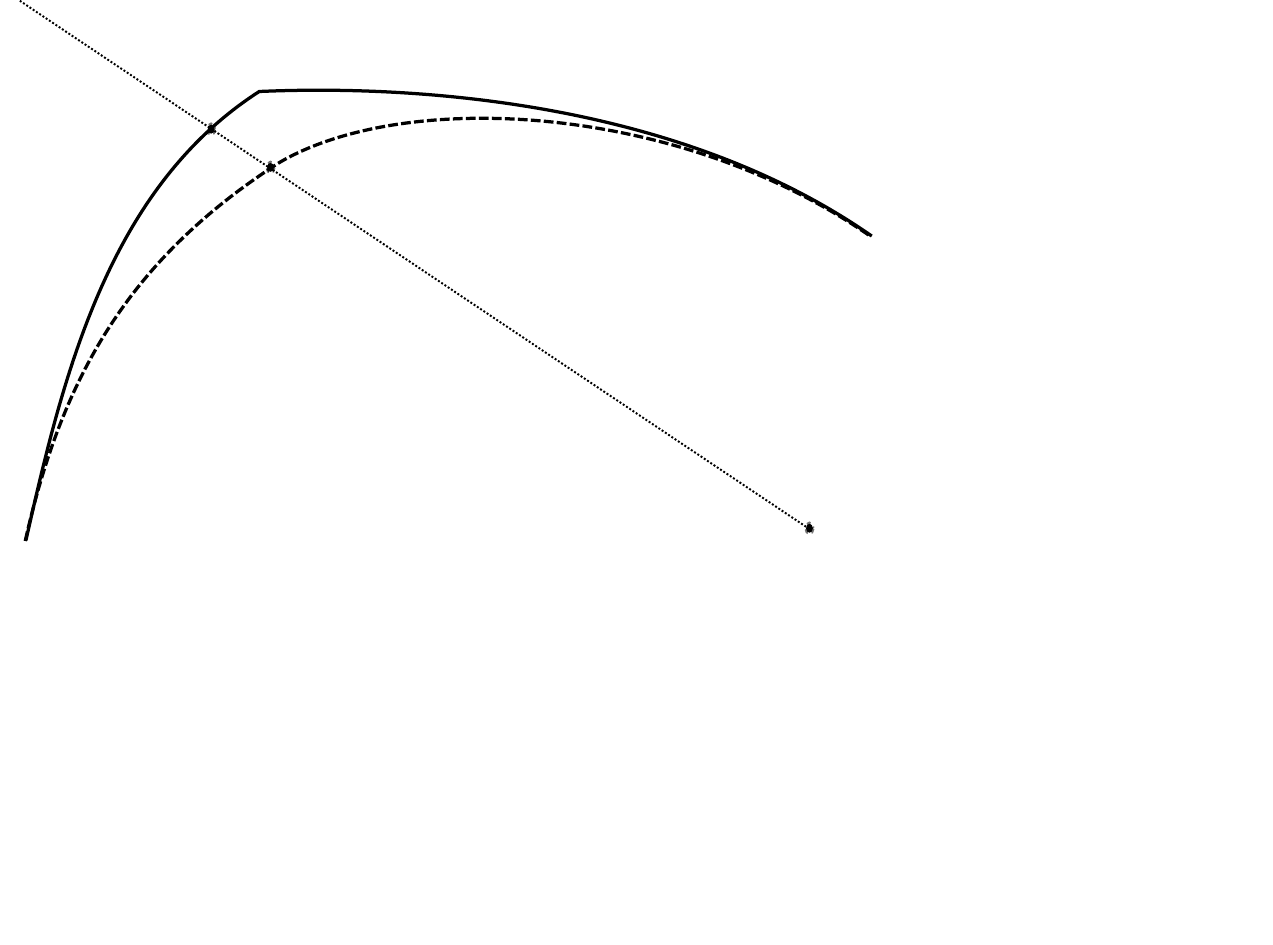}}%
    \put(0.65258451,0.3298646){\color[rgb]{0,0,0}\makebox(0,0)[lb]{\smash{$O$}}}%
    \put(0.19071726,0.55283647){\color[rgb]{0,0,0}\makebox(0,0)[lb]{\smash{$p$}}}%
    \put(-0.00162638,0.61973193){\color[rgb]{0,0,0}\makebox(0,0)[lb]{\smash{$\pi_\mu(p)$}}}%
    \put(0.24203588,0.33523657){\color[rgb]{0,0,0}\makebox(0,0)[lb]{\smash{\Large{$K_\mu$}}}}%
  \end{picture}%
\endgroup%

}\qquad\qquad
\subfigure[ \label{fig:Gamma_mu}]%
{
\def\svgwidth{7cm}
\begingroup%
  \makeatletter%
  \providecommand\color[2][]{%
    \errmessage{(Inkscape) Color is used for the text in Inkscape, but the package 'color.sty' is not loaded}%
    \renewcommand\color[2][]{}%
  }%
  \providecommand\transparent[1]{%
    \errmessage{(Inkscape) Transparency is used (non-zero) for the text in Inkscape, but the package 'transparent.sty' is not loaded}%
    \renewcommand\transparent[1]{}%
  }%
  \providecommand\rotatebox[2]{#2}%
  \ifx\svgwidth\undefined%
    \setlength{\unitlength}{279.92791891bp}%
    \ifx\svgscale\undefined%
      \relax%
    \else%
      \setlength{\unitlength}{\unitlength * \real{\svgscale}}%
    \fi%
  \else%
    \setlength{\unitlength}{\svgwidth}%
  \fi%
  \global\let\svgwidth\undefined%
  \global\let\svgscale\undefined%
  \makeatother%
  \begin{picture}(1,1.02714506)%
    \put(0,0){\includegraphics[width=\unitlength]{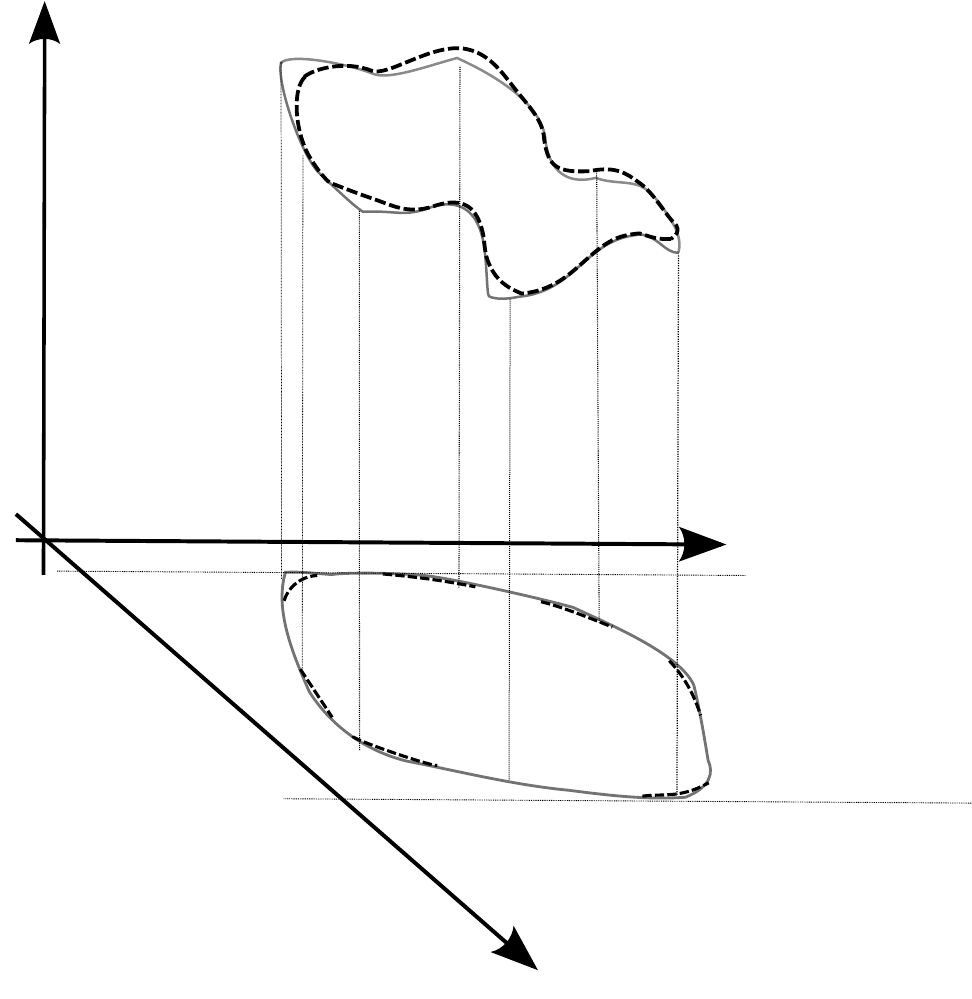}}%
    \put(-0.00289412,0.97587742){\color[rgb]{0,0,0}\makebox(0,0)[lb]{\smash{$\eta$}}}%
    \put(0.75491462,0.49006987){\color[rgb]{0,0,0}\makebox(0,0)[lb]{\smash{$\xi$}}}%
    \put(0.42148043,0.29109083){\color[rgb]{0,0,0}\makebox(0,0)[lb]{\smash{$K_\mu$}}}%
    \put(0.05523174,0.3936432){\color[rgb]{0,0,0}\makebox(0,0)[lb]{\smash{$a$}}}%
    \put(0.30887293,0.16128967){\color[rgb]{0,0,0}\makebox(0,0)[lb]{\smash{$b$}}}%
    \put(0.47035998,0.00724542){\color[rgb]{0,0,0}\makebox(0,0)[lb]{\smash{$t$}}}%
  \end{picture}%
\endgroup%

}
\caption{\small{(a): the approximation process near a nondifferentiability point
of $\partial K$ and the action of the projection map $\pi_\mu$. (b): 
the graph of the boundary value $\phi_\mu$, approximating the space curve $\Gamma$.}}  
\label{fig:graph_approximation}
\end{figure}

\smallskip

{\tt Step 3}. Extension  of
$z_{\mu}$ on $K$: definition of the extended surface $\Sigmamu$.

We consider again the projection $\pi_\mu$ defined in the previous step and
we observe that for every point $(\ppa,\xi) \in \K\setminus \Ke$ there exist a unique 
$p\in \dKe$ and $\rho\in (0,1] $ such that
$$
(\ppa, \xi) = \rho p + (1- \rho)\pie(p).
$$
Thus we extend $z_\mu$ to $K$ defining
\begin{displaymath}
\widehat z_{\e}(t,\xi):=
\begin{cases}
\rho \phi_\mu(p) + (1- \rho)\phi(\pie(p)), & (t,\xi) \in K\setminus K_\mu, \\
z_\mu(t,\xi), & (t,\xi) \in K_\mu.
\end{cases}
\end{displaymath}
Notice that 
\begin{equation}\label{eq:tracciazhatz}
\widehat z_{\e}=
z \qquad {\rm on}~ \partial K.
\end{equation}
We denote by $\Sigmamu$
the graph of $\widehat z_{\e}$ on $K$.
Property \eqref{eq:phimulip} gives a uniform control of the gradient 
of $\widehat{z}_\mu$ on $\K\setminus\K_\mu$, which implies that
\begin{equation*}\label{eq:nu}
\lim_{\mu\to 0^+}\H^2(\widehat z_{\mu} (K\setminus K_\e))=0.
\end{equation*}
Thus from \eqref{conve}
\begin{equation}\label{convemu}
\lim_{\mu \to 0^+}
\mathcal H^2(\Sigmamu)
= \mathcal H^2(\immmapmin).
\end{equation}
\begin{Remark}\label{rem:llipbis}\rm
By construction, we have that $\widehat z_\mu$ is Lipschitz continuous.
\end{Remark}
\smallskip

{\tt Step 4}. Definition of the  parameter space $D$.

For our goals, 
it is convenient to choose a parameter space $D$ different from $\K$, for 
parametrizing
$\immmapmin$ and 
$\Sigmamu$. 
Set
$$
\sigma(t) := \frac{\gammapu(t) - \gammamu(t)}{2}, \qquad t \in [a,b].
$$
Let $\dommap\subset\R^2_{(\ppa,\secondoparametroastratto)}$ be defined as follows:
\begin{equation*}\label{eq:dommap}
\dommap :=
\left\{(\ppa, \secondoparametroastratto) : ~ \ppa \in[a,b], |\secondoparametroastratto| 
\leq 
 \sigma(t) \right\},
\end{equation*}
which has the same qualitative properties of $K$, in particular
$\partial \dommap= {\rm graph}(\sigma) \cup {\rm graph}(-\sigma)$,
and 
$\dommap$ has two angles 
in correspondence of $t=a$ and $t=b$ (same angles as the 
corresponding ones
of $K$). We notice that the segment $(a,b)\times \{0\}$ 
is contained in ${\rm int}(\dommap)$, 
 see Figure \ref{fig:Lambda}.

\smallskip

{\tt Step 5}. Definition of the maps $X$ and $X_\mu$.

The construction of the function $\mappa_\eps$ in the statement of the
theorem is mainly based on the maps
$$
X: \dommap \to \R^3, \qquad
X_\mu: \dommap \to \R^3,
$$
defined as follows: for any $(\ppa,\spa)\in \dommap$
\begin{equation}\label{eq:par_graph}
\begin{aligned}
X(\ppa, \secondoparametroastratto) 
:= &  \left(\ppa, \secondoparametroastratto + 
\frac{\gamma^+_1(\ppa)+
\gamma^-_1(\ppa)}{2}, z\left(\ppa, \secondoparametroastratto + 
\frac{\gamma^+_1(\ppa)+
\gamma^-_1(\ppa)}{2}\right)\right)
\\ = &(t,X_2(t,s), X_3(t,s)),
\\
\\
X_\mu(\ppa, \secondoparametroastratto) 
:= & \left(\ppa, \secondoparametroastratto + 
\frac{\gamma^+_1(\ppa)+
\gamma^-_1(\ppa)}{2}, \widehat z_\e \left(\ppa, \secondoparametroastratto + 
\frac{\gamma^+_1(\ppa)+
\gamma^-_1(\ppa)}{2}\right)\right)
\\
= & (t,X_{\e 2}(t,s), X_{\e 3}(t,s)).
\end{aligned}
\end{equation}

\begin{Remark}\label{rem:semi}\rm
We stress that  the maps $X$ and $X_\mu$
are semicartesian.
In particular, where they are differentiable,
their gradient
never vanishes on $D$. Observe also that, from Remark \ref{rem:llipbis}, 
it follows
\begin{equation}\label{eq:Xmulip}
X_\mu \in {\rm Lip}(D; \R^3).
\end{equation}
\end{Remark}

\smallskip
{\tt Step 6}. Definition of the map $\mappa_\eps$.

For the definition of $\mappa_\eps$ we need some preparation.
Denote by 
$^{\perp}$ the counterclockwise rotation of $\pi/2$ in $\source$.
Hypothesis $({\rm u}1)$ implies that there exists
$\delta>0$ and a closed 
set contained in $\Omega$ and containing $\osaltou$ of the form 
$\Lambda(\rettangolo)$, where
$\rettangolo:= \ab \times [-\delta,\delta]$ and 
$$
\Lambda(\primoparametroastratto,\secondoparametroastratto) 
:= \alpha(\primoparametroastratto) + 
\secondoparametroastratto \dot \alpha(\primoparametroastratto)^\perp,
\qquad (\primoparametroastratto,\secondoparametroastratto) \in \rettangolo,
$$
is a diffeomorphism of class $\mathcal C^1(\rettangolo;\Lambda(R))$, see Figure \ref{fig:Lambda}.
If  $\Lambda^{-1}: \Lambda(R) \to \rettangolo$ 
is the inverse
of $\Lambda$, we have   
$$
\Lambda^{-1}(x,y) = (\primoparametroastratto(x,y), \secondoparametroastratto(x,y)),
$$
where
\begin{itemize}
\item[-]
$\secondoparametroastratto
(x,y) = d(x,y)$ is the 
distance 
of $(x,y)$ from $\saltou$ on the side of $\saltou$ corresponding to the trace $\mappa^+$,
and minus the distance of $(x,y)$ from $\saltou$ on the other side, 
\item[-]
$\primoparametroastratto(x,y)$ is so that 
$\alpha(\primoparametroastratto(x,y)) = (x,y) - d(x,y) \grad d(x,y)$ 
is the unique point on $\osaltou$ nearest to 
$(x,y)$.
\end{itemize}

\begin{figure}
\centering
\def\svgwidth{14cm}
\begingroup%
  \makeatletter%
  \providecommand\color[2][]{%
    \errmessage{(Inkscape) Color is used for the text in Inkscape, but the package 'color.sty' is not loaded}%
    \renewcommand\color[2][]{}%
  }%
  \providecommand\transparent[1]{%
    \errmessage{(Inkscape) Transparency is used (non-zero) for the text in Inkscape, but the package 'transparent.sty' is not loaded}%
    \renewcommand\transparent[1]{}%
  }%
  \providecommand\rotatebox[2]{#2}%
  \ifx\svgwidth\undefined%
    \setlength{\unitlength}{514.08510927bp}%
    \ifx\svgscale\undefined%
      \relax%
    \else%
      \setlength{\unitlength}{\unitlength * \real{\svgscale}}%
    \fi%
  \else%
    \setlength{\unitlength}{\svgwidth}%
  \fi%
  \global\let\svgwidth\undefined%
  \global\let\svgscale\undefined%
  \makeatother%
  \begin{picture}(1,0.4379813)%
    \put(0,0){\includegraphics[width=\unitlength]{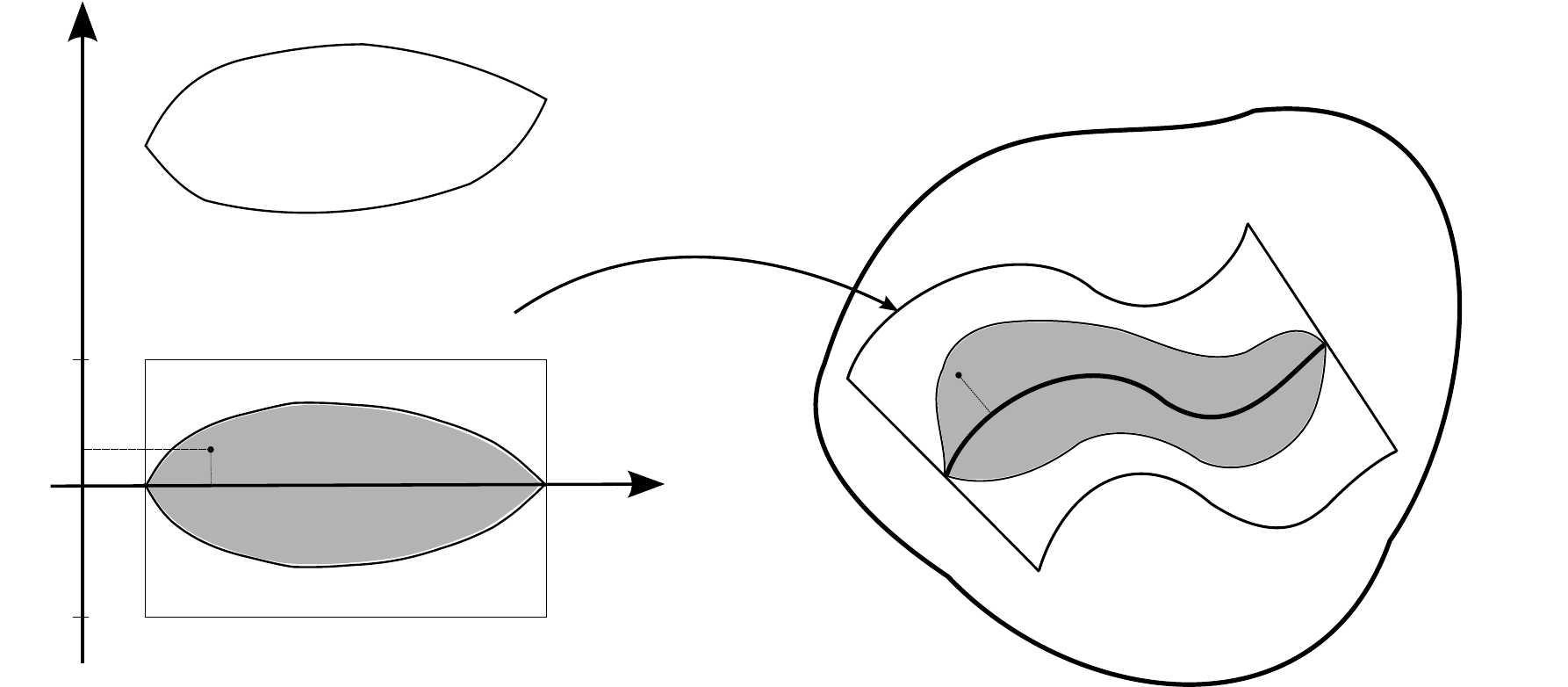}}%
    \put(0.23887047,0.35924657){\color[rgb]{0,0,0}\makebox(0,0)[lb]{\smash{$K$}}}%
    \put(0.15958227,0.14648093){\color[rgb]{0,0,0}\makebox(0,0)[lb]{\smash{$\dommap^+$}}}%
    \put(0.15903822,0.10090865){\color[rgb]{0,0,0}\makebox(0,0)[lb]{\smash{$\dommap^-$}}}%
    \put(0.37318419,0.18794456){\color[rgb]{0,0,0}\makebox(0,0)[lb]{\smash{$\rettangolo^+$}}}%
    \put(0.3723709,0.05133656){\color[rgb]{0,0,0}\makebox(0,0)[lb]{\smash{$\rettangolo^-$}}}%
    \put(0.07400233,0.13282477){\color[rgb]{0,0,0}\makebox(0,0)[lb]{\smash{$a$}}}%
    \put(0.35163641,0.13225329){\color[rgb]{0,0,0}\makebox(0,0)[lb]{\smash{$b$}}}%
    \put(0.01825545,0.21050695){\color[rgb]{0,0,0}\makebox(0,0)[lb]{\smash{$\delta$}}}%
    \put(-0.00140022,0.0467114){\color[rgb]{0,0,0}\makebox(0,0)[lb]{\smash{$-\delta$}}}%
    \put(0.43543072,0.28372786){\color[rgb]{0,0,0}\makebox(0,0)[lb]{\smash{\Large{$\Lambda$}}}}%
    \put(0.90539181,0.0736459){\color[rgb]{0,0,0}\makebox(0,0)[lb]{\smash{$\Om$}}}%
    \put(0.85092613,0.22148136){\color[rgb]{0,0,0}\makebox(0,0)[lb]{\smash{$J_\mappa$}}}%
    \put(0.62136117,0.20574659){\color[rgb]{0,0,0}\makebox(0,0)[lb]{\smash{\begin{small}$\Lambda(\ppa,\spa)$\end{small}}}}%
    \put(0.03207069,0.151806){\color[rgb]{0,0,0}\makebox(0,0)[lb]{\smash{\begin{small}$s$\end{small}}}}%
    \put(0.02603702,0.39791384){\color[rgb]{0,0,0}\makebox(0,0)[lb]{\smash{$\spa$}}}%
    \put(0.64087361,0.16489621){\color[rgb]{0,0,0}\makebox(0,0)[lb]{\smash{\begin{small}$\alpha(\ppa)$\end{small}}}}%
    \put(0.13364753,0.1104412){\color[rgb]{0,0,0}\makebox(0,0)[lb]{\smash{\begin{small}$t$\end{small}}}}%
    \put(0.41308405,0.102002){\color[rgb]{0,0,0}\makebox(0,0)[lb]{\smash{$t$}}}%
  \end{picture}%
\endgroup%

\caption{\small{ 
We display the domain $\dommap=\overline{\dommap^+\cup\dommap^-}$ obtained by 
symmetrizing $K$. It is contained in the rectangle
$R=\ab\times[-\delta,\delta]$ on which it is defined the diffeomorphism $\Lambda$; $\Lambda(\ab\times\{0\})$ is exactly
the closure $\osalto$ of the discontinuity curve.}}\label{fig:Lambda}
\end{figure}

Since $\osaltou$ is of class $\C^2$, we have that 
$d$ is of class $\C^2$ on $\Lambda(R)$\footnote{It is sufficient to slightly extend
$\osaltou$ and consider $d$ on a small enough tubolar neighborhood of the extension.}
 and 
$\primoparametroastratto$ is of class $\C^1$ on $\Lambda(R)$. 

We can always suppose 
\begin{equation}\label{eq:incD}
\dommap \setminus \left((a,0)\cup(b,0)\right) \subset {\rm int}(\rettangolo),
\end{equation}
since,  if not, we choose   $c \in(0,1)$
so that
$\dommap_c:=\{(\primoparametroastratto,
\secondoparametroastratto)\in\R^2:\,\,(\primoparametroastratto,
\secondoparametroastratto/ c )\in \dommap\} \subset \rettangolo$,
and we prove the result with $\dommap _c$ in place of $\dommap$
and $X_c(t,s):=X(t,s/ c)$ in place of $X(t,s)$.

Set  
$R^+ := \ab\times (0,\delta]$, $R^- := \ab \times [-\delta,0)$, and
$$
\dommap^+ := \dommap \cap R^+, \qquad 
\dommap^- := \dommap \cap R^-.
$$
For any $\eps \in (0,1)$
let 
\begin{displaymath}
	\Ce 
:=\{(\primoparametroastratto,
\secondoparametroastratto)\in\R^2:\,\,(\primoparametroastratto,
\secondoparametroastratto/\eps)\in \dommap\},
\end{displaymath}
and 
$$
\Ce^\pm 
:=\{(\primoparametroastratto,
\secondoparametroastratto)\in\R^2:\,\,(\primoparametroastratto,
\secondoparametroastratto/\eps)\in \dommap^\pm\},
$$
so that 
$$
{\rm int}(\Ce) \supset (a,b) \times \{0\}.
$$
We set  $R_\eps := \ab \times (-\eps \delta, \eps \delta)$ and
$R_\eps^+ := \ab \times (0,\eps \delta]$, 
$R_\eps^- := \ab \times [-\eps\delta, 0)$. 
{}From \eqref{eq:incD}, we have
$\Ce \subset \rettangolo_\eps$.

\smallskip
We are now in a position to 
define the sequence $(\ue)\subset {\rm Lip}(\Omega; \R^2)$. We do this  
in three steps as follows:
\begin{itemize}
\item[-] \textbf{outer region.} If $(x,y)\in \Om \setminus \Lambda(\rettangolo_\eps)$
\begin{equation}
\label{eq:ue1}
\ue(x,y) := \mappa(x,y);
\end{equation}
\item[-] \textbf{opening the fracture: intermediate region.}
If $(x,y)\in \Lambda(\rettangolo_\eps^\pm \setminus \Ce^\pm)$
\begin{equation}
\label{eq:ue2}
\ue(x,y):=\mappa(T_\eps^\pm(x,y)),
\end{equation}
where $\Te^\pm:= \Lambda \circ\Phi_\eps^\pm \circ \Lambda^{-1}$ with
\begin{equation*}
\begin{aligned}
\Phi_\eps^+
: \rettangolo_\eps^+ \setminus \Ce^+ &\to \rettangolo_\eps^+, \qquad  
\Phi_\eps^+
(t,s):= \left(t, \frac{s- \eps \sigma(t)}{\delta - \sigma(t)}\delta\right),
\\
\Phi_\eps^-
: \rettangolo_\eps^- \setminus \Ce^- &\to\rettangolo_\eps^-,
\qquad
\Phi_\eps^-
(t,s):= \left(t, \frac{s + \eps \sigma(t)}{\delta - \sigma(t)}\delta\right).
\end{aligned}
\end{equation*}
Notice that $T^\pm_\eps$ is the 
identity on $\partial \rettangolo^\pm_\eps 
\setminus (\ab \times \{0\})$, see Figure \ref{fig:mappaT}.
\item[-] \textbf{opening the fracture: inner region.} If $(x,y)\in \Lambda(\Ce)$
\begin{equation}
\label{eq:ue3}
\ue(x,y):=\left(X_{\e_\eps 2}\left(t(x,y), \frac{d(x,y)}{\eps}\right), X_{\e_\eps 3}\left(t(x,y), \frac{d(x,y)}{\eps}\right)\right),
\end{equation}
for a suitable choice of  the sequence $(\e_\eps)_\eps$ 
converging to $0$ as $\eps\to 0^+$, that 
will be selected later\footnote{See the conclusion 
of {\tt step 9}.}.
\end{itemize}

\begin{figure}[h!]
\centering
\def\svgwidth{9cm}
\begingroup%
  \makeatletter%
  \providecommand\color[2][]{%
    \errmessage{(Inkscape) Color is used for the text in Inkscape, but the package 'color.sty' is not loaded}%
    \renewcommand\color[2][]{}%
  }%
  \providecommand\transparent[1]{%
    \errmessage{(Inkscape) Transparency is used (non-zero) for the text in Inkscape, but the package 'transparent.sty' is not loaded}%
    \renewcommand\transparent[1]{}%
  }%
  \providecommand\rotatebox[2]{#2}%
  \ifx\svgwidth\undefined%
    \setlength{\unitlength}{538.75276133bp}%
    \ifx\svgscale\undefined%
      \relax%
    \else%
      \setlength{\unitlength}{\unitlength * \real{\svgscale}}%
    \fi%
  \else%
    \setlength{\unitlength}{\svgwidth}%
  \fi%
  \global\let\svgwidth\undefined%
  \global\let\svgscale\undefined%
  \makeatother%
  \begin{picture}(1,0.62760699)%
    \put(0,0){\includegraphics[width=\unitlength]{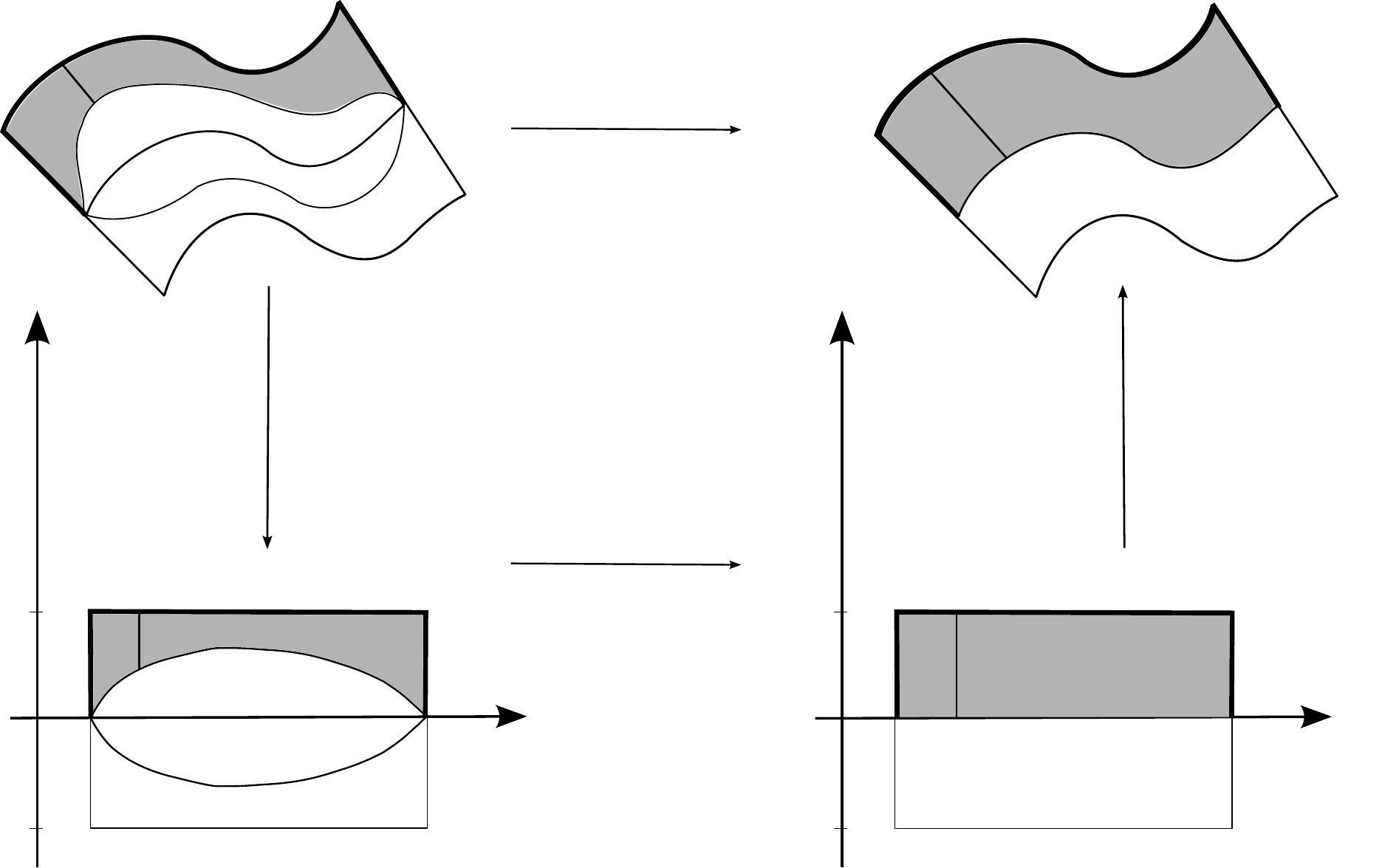}}%
    \put(0.20202561,0.34912234){\color[rgb]{0,0,0}\makebox(0,0)[lb]{\smash{$\Lambda^{-1}$}}}%
    \put(0.83119354,0.35039297){\color[rgb]{0,0,0}\makebox(0,0)[lb]{\smash{$\Lambda$}}}%
    \put(0.43805702,0.54792592){\color[rgb]{0,0,0}\makebox(0,0)[lb]{\smash{$T^+_\eps$}}}%
    \put(0.31810726,0.15091947){\color[rgb]{0,0,0}\makebox(0,0)[lb]{\smash{$\rettangolo^+_\eps$}}}%
    \put(0.31834701,0.03770614){\color[rgb]{0,0,0}\makebox(0,0)[lb]{\smash{$\rettangolo^-\eps$}}}%
    \put(0.12149015,0.0699504){\color[rgb]{0,0,0}\makebox(0,0)[lb]{\smash{$\dommap^-_\eps$}}}%
    \put(0.12076737,0.12287708){\color[rgb]{0,0,0}\makebox(0,0)[lb]{\smash{$\dommap^+_\eps$}}}%
    \put(0.9005046,0.15122218){\color[rgb]{0,0,0}\makebox(0,0)[lb]{\smash{$\rettangolo^+_\eps$}}}%
    \put(0.90074445,0.03800886){\color[rgb]{0,0,0}\makebox(0,0)[lb]{\smash{$\rettangolo^-_\eps$}}}%
    \put(0.43781497,0.23309896){\color[rgb]{0,0,0}\makebox(0,0)[lb]{\smash{$\Phi^+_\eps$}}}%
  \end{picture}%
\endgroup%

\caption{\small{The action of the map $T^+_\eps$. Any oblique small segment on the top left
is mapped in the parallel longer segment reaching the fracture, on the top right.
}}
\label{fig:mappaT}
\end{figure}

\begin{Remark}\label{rem:lipue}\rm
We have
\begin{equation}\label{ueuelip}
\ue \in \rm{Lip} 
(\Omega;\R^2). 
\end{equation}
Indeed 
\begin{itemize}
\item[-] by  assumption $({\rm u}2)$ it follows  $\mappa \in 
W^{1,\infty}(\Omega\setminus \Lambda(\Reps);\R^2)$, hence
$\mappa_\eps 
\in W^{1,\infty}(\Omega\setminus \Lambda(\Reps);\R^2)$;
\item[-] in $\Lambda(\Ce)$
 the regularity of $\ue$ 
is the same as the Lipschitz regularity of $X_{\e_\eps}$, see \eqref{eq:Xmulip};
\item[-]
in $\Lambda(\rettangolo_\eps\setminus \Ce)$, $\ue$ 
is defined as the composition of $\mappa \in W^{1,\infty}\left(\Om\setminus
\osaltou; \R^2\right)$
and a Lipschitz  deformation.
\end{itemize}
Since by construction $\mappa_\eps$ is continuous (remember \eqref{eq:tracciazhatz}), inclusion 
\eqref{ueuelip} follows.
\end{Remark}

\begin{Remark}\label{rem:luc}\rm
We have $$\sup_{\eps \in (0,1]} \Vert \ue\Vert_{L^\infty(\Om; \R^2)} < +\infty,$$
since $\mappa \in L^\infty(\Omega; \R^2)$ by assumption $(\rm u 1)$ and, for some $\overline \mu>0$,
$\sup_{\mu \in (0,\overline \mu)} 
\Vert X_\mu\Vert_{L^\infty(D)} < +\infty$.
Therefore
$\ue \to \mappa$ in $L^1(\Om ; \R^2)$. Indeed
\begin{displaymath}
\int_\Om |\ue - \mappa | \,dx\,dy = \int_{\Lambda(\rettangolo_\eps)} |\ue -\mappa | \,dx\,dy 
\to 0
\end{displaymath} 
as $\eps \to 0^+$, because the Lebesgue measure of $ \Lambda(R_\eps)$ tends to $0$.
\end{Remark}
 
\smallskip

{\tt Step 7}. We have
\begin{equation*}\label{eq:out_limit}
\lim_{\eps \to 0^+}
\rel(\ue, \Omega \setminus \Lambda(\rettangolo_\eps))
= \rel(\mappa, \Omega \setminus \saltou).
\end{equation*}
Indeed by \eqref{eq:ue1},
$$
\rel(\ue, \Om\setminus\Lambda(\rettangolo_\eps))=\rel(\mappa, \Om\setminus\Lambda(\rettangolo_\eps)).
$$ 
\medskip

Let us show that the 
contribution to the area in  the intermediate region
$\Lambda(\rettangolo_\eps \setminus \Ce)$ (definition \eqref{eq:ue2}) is 
negligible as $\eps \to 0^+$.

{\tt Step 8}. We have
$$
\lim_{\eps \to 0^+}
\rel(\ue, \Lambda(\rettangolo_\eps^\pm \setminus \Ce^\pm))
 = 0.
$$
We make the computation in 
$\Lambda(R_\eps^+ \setminus D_\eps^+)$,
the case in 
$\Lambda(R_\eps^- \setminus D_\eps^-)$ being similar.
To simplify notation,
 we write $\Te$ instead of $\Te^+$, and set $\Te = (\Teu,\Ted)$.

Take a constant $C>0$ so that
\begin{equation}\label{eq:estimate_area}
\begin{aligned}
& \A (\ue, \Lambda(\rettangolo_\eps^+\setminus \Ce^+))
= \int_{\Lambda(\rettangolo_\eps^+\setminus \Ce^+)}|\M(\grad \ue)|\,dx\,dy
\\ & \leq  C \int_{\Lambda(\rettangolo_\eps^+\setminus \Ce^+)}\big[ 1 + |\dx \ueu | +|\dx \ued | + |\dy \ueu | + | \dy \ued | + |\dx \ueu \dy \ued - \dy \ueu \dx \ued | \big]dx\,dy
\end{aligned}
\end{equation}
where for $i=1,2$ 
\begin{equation*}
\dx \uei = \dx \mappai \dx \Teu + \dy \mappai \dx \Ted, \qquad
\dy \uei = \dx \mappai \dy \Teu + \dy \mappai \dy \Ted.
\end{equation*}

From the definition of $\Te$ 
\begin{displaymath}
\grad\Te(x,y) = 
\grad \Lambda\left(\Phi_\eps(t(x,y),s(x,y))\right)^T  \grad \Phi_\eps^+ 
(\primoparametroastratto (x,y), \secondoparametroastratto (x,y)) \cdot \grad \Lambda^{-1}(x,y).
\end{displaymath}
$\Lambda$ is a $\C^1$ diffeomorphism, thus all 
components of its Jacobian are bounded; 
on the other hand the Jacobian of the transformation $\Phi_\eps^+$ is
\begin{displaymath}
\grad \Phi_\eps^+(\primoparametroastratto, \secondoparametroastratto) = 
\begin{bmatrix} \displaystyle{1} & \displaystyle{0} 
\\ \displaystyle{\frac{-\delta \dot \sigma(t)[\delta \eps - s]}{[\delta - \sigma(t)]^2}} & \displaystyle{\frac{\delta}{\delta- \sigma(t)}}
\end{bmatrix}.
\end{displaymath}
The denominator $\left(\delta - \sigma(t)\right)$ is strictly positive; 
moreover $\sigma \in {\rm Lip}(\ab)$ and thus all
terms of $\grad \Phi_\eps^+$ are 
uniformly bounded with respect to $\eps$. 

Then, since both $\grad \Te$ and $\grad \mappa$ are bounded, we obtain 
that also the integrand on 
the right hand side of  \eqref{eq:estimate_area} can be controlled by a constant independent of $\eps$ and
\begin{equation*}
\lim_{\eps \to 0^+} \A (\ue, \Lambda(\rettangolo_\eps^+\setminus \Ce^+))= 0 .
\end{equation*}

The main point is to show that the definition given in \eqref{eq:ue3} 
in the relevant region $\Lambda(\Ce)$ is such that the 
corresponding area gives origin to the term $\H^2(\immmapmin)$
in the limit $\eps \to 0^+$, and it is done in the next step.

\smallskip

{\tt Step 9}. We have
\begin{equation}\label{eq:cruc}
\lim_{\eps \to 0^+}
\rel(\ue, \Lambda(\Ce)) = 
\H^2(\immmapmin).
\end{equation}
Let us fix $\e>0$; we denote by $\ue^\e$ the function defined on $\Lambda(\Ce)$ as
$$
\ue^\e(x,y):=(\Xed(\primoparametroastratto(x,y), d(x,y)/\eps), \Xet(\primoparametroastratto(x,y), d(x,y)/\eps))
= (\ueu^\e(x,y), \ued^\e(x,y)).
$$
In $\Lambda({\rm int}(\Ce))$ we have
$$
\grad \ueu^\e = 
\begin{pmatrix} 
\displaystyle
\partial_\primoparametroastratto \Xed ~ \partial_x \primoparametroastratto + \frac{1}{\eps} ~\partial_\secondoparametroastratto
\Xed~ \partial_x d
\\
\\ 
\displaystyle
\partial_\primoparametroastratto \Xed~ \partial_y \primoparametroastratto + \frac{1}{\eps} ~\partial_\secondoparametroastratto
\Xed~ \partial_y d
\end{pmatrix},
\qquad
\grad \ued^\e = 
\begin{pmatrix}  
\displaystyle
\partial_\primoparametroastratto \Xet ~\partial_x \primoparametroastratto + \frac{1}{\eps}~\partial_\secondoparametroastratto
\Xet ~\partial_x d
\\
\\
\displaystyle
\partial_\primoparametroastratto \Xet~ \partial_y \primoparametroastratto + \frac{1}{\eps}~ \partial_\secondoparametroastratto
\Xet ~\partial_y d
\end{pmatrix},
$$
where the left hand sides and $t$ and $d$ are evaluated at $(x,y)$, while
$\Xed$ and $\Xet$  are evaluated at $(\primoparametroastratto(x,y), d(x,y)/\eps)$.

Therefore 
\begin{equation}\label{eq:gradienti}
 \big\vert \grad \ueu^\e\big\vert^2 + 
\big\vert \grad \ued^\e\big\vert^2  
\\
= \frac{1}{\eps^2}
G_1  
 + \frac{2}{\eps} G_2 + G_3 \qquad {\rm in~} \Lambda({\rm int}(\Ce)), 
\end{equation}
with
$$
\left\{
\begin{aligned}
G_1   :=& 
\Big(
(\partial_\secondoparametroastratto \Xed)^2 
+(\partial_\secondoparametroastratto \Xet)^2 \Big) 
~\vert \grad d\vert^2 = (\partial_\secondoparametroastratto \Xed)^2 
+(\partial_\secondoparametroastratto \Xet)^2,
\\
 G_2 :=& 
\Big(
\partial_\primoparametroastratto \Xed 
\partial_\secondoparametroastratto
\Xed 
+\partial_\primoparametroastratto \Xet
\partial_\secondoparametroastratto \Xet \Big)
~\grad \primoparametroastratto \cdot \grad d,
\\
G_3 :=&  
\Big(
(\partial_\primoparametroastratto \Xed)^2
+(\partial_\primoparametroastratto \Xet)^2\Big)
~\vert \grad\primoparametroastratto\vert^2,
\end{aligned}
\right.
$$
where we have used the eikonal equation for the signed distance function
$$
\vert \grad d\vert^2=1 \qquad {\rm in}~ {\rm int}(\Lambda(R)).
$$
Notice that $\vert\grad \primoparametroastratto\vert^2$
is uniformly bounded 
with respect to $\eps$ on $\Ce$, by the 
assumption that $\osaltou$ is of class $\C^2$.

A direct computation shows that 
\begin{equation}\label{eq:direct}
\partial_x \ueu^\e \partial_y \ued^\e - \partial_x \ued^\e
\partial_y \ueu^\e = \frac{1}{\eps^2} E_1 + \frac{1}{\eps} \widetilde E_2
+E_3,
\end{equation}
with
\begin{equation}\label{eq:Di}
\left\{
\begin{aligned}
E_1 := &
\partial_\secondoparametroastratto \Xed \partial_x d 
\partial_\secondoparametroastratto \Xet \partial_y d - 
\partial_\secondoparametroastratto \Xed \partial_y d 
\partial_\secondoparametroastratto \Xet \partial_x d =0, 
\\
\widetilde E_2 := &
\partial_\primoparametroastratto \Xed 
\partial_\secondoparametroastratto \Xet 
\Big(
\partial_x \primoparametroastratto 
\partial_y d  - 
\partial_y \primoparametroastratto 
\partial_x d \Big)
+
\partial_\primoparametroastratto \Xet 
\partial_\secondoparametroastratto \Xed 
\Big(
\partial_x d 
\partial_y \primoparametroastratto 
-
\partial_y d 
\partial_x \primoparametroastratto \Big) 
\\
=&  
\Big(\partial_\primoparametroastratto \Xed 
\partial_\secondoparametroastratto \Xet 
-
\partial_\primoparametroastratto \Xet
\partial_\secondoparametroastratto \Xed\Big)
 \grad \primoparametroastratto \cdot \grad d^\perp,
\\
E_3:=& 
\partial_\primoparametroastratto \Xed 
\partial_x \primoparametroastratto 
\partial_\primoparametroastratto \Xet
\partial_y \primoparametroastratto 
-
\partial_\primoparametroastratto \Xed 
\partial_y \primoparametroastratto 
\partial_\primoparametroastratto \Xet
\partial_x \primoparametroastratto =0,
\end{aligned}
\right.
\end{equation}
Set
\begin{equation}\label{eq:pa}
E_2 := 
\partial_\primoparametroastratto \Xed 
\partial_\secondoparametroastratto \Xet 
-
\partial_\primoparametroastratto \Xet
\partial_\secondoparametroastratto \Xed.
\end{equation}
{}From \eqref{eq:direct}, \eqref{eq:Di} and \eqref{eq:pa}
we have
\begin{equation}\label{eq:determinante}
\big(\partial_x \ueu^\e \partial_y \ued^\e - \partial_x \ued^\e
\partial_y \ueu^\e\big)^2 = \frac{1}{\eps^2} (E_2)^2 ~\vert \grad \primoparametroastratto\cdot \grad
d^\perp\vert^2,
\end{equation}
where again 
$\Xed$ and $\Xet$  are evaluated at $(\primoparametroastratto(x,y), d(x,y)/\eps)$.

Notice that if $(x,y) \in \Lambda(\Ce)$ then the 
 vector $\grad d^\perp(x,y) = \grad d^\perp (\pi(x,y))$ is tangent
to $\saltou$ at $\pi(x,y)$, 
and has unit length. In addition, $\primoparametroastratto$ is constant
along the normal direction to $\saltou$, so that 
if $(x,y) \in \Lambda(\Ce)$ then 
$\grad \primoparametroastratto(x,y) = \grad \primoparametroastratto(\pi(x,y)) + \mathcal O(\eps)$, 
and $\grad \primoparametroastratto(\pi(x,y))$ is also tangent 
to $\saltou$, where
$$
\vert \mathcal O(\eps)\vert \leq 
c \Vert\kappa \Vert_{L^\infty(\saltou)} \max_{t\in \ab} (\sigma^+(t)-\sigma^-(t)),
$$
$\kappa$ being the curvature of $\saltou$,
for a positive constant $c$ independent of $\eps$. 

 Since $\alpha$ is an arc-length 
parametrization of $\saltou$, it follows that 
$\vert \grad \primoparametroastratto\vert=1$ on $\saltou$. 
Therefore
\begin{equation}\label{eq:lucia}
\vert \grad \primoparametroastratto \cdot \grad d^\perp\vert =1 + \mathcal
O(\eps)\qquad {\rm on}~ \Lambda(\Ce),
\end{equation}
and hence from \eqref{eq:determinante}
$$
\big(\partial_x \ueu^\e \partial_y \ued^\e - \partial_x \ued^\e
\partial_y \ueu^\e\big)^2 = \frac{1}{\eps^2} (E_2)^2 (1+\mathcal O(\eps)).
$$
Whence, from \eqref{eq:gradienti} and \eqref{eq:determinante},
$$
\begin{aligned}
& \rel(\ue^\e, 
\Lambda(\Ce))
\\
\\
=& \int_{\Lambda(\Ce)}
\sqrt{1 + \vert \grad \ueu^\e \vert^2 + \vert \grad \ued^\e
\vert^2 + 
\left(\partial_x \ueu^\e \partial_y \ued^\e - \partial_x \ued^\e
\partial_y \ueu^\e \right)^2
} ~dx dy
\\
\\
= &
\int_{\Lambda(\Ce)}
\sqrt{1 + G_3 + \frac{2}{\eps}~ G_2+ \frac{1}{\eps^2} \Big[
G_1+ (E_2)^2(1+\mathcal O(\eps))\Big]} ~dxdy.
\end{aligned}
$$
The area formula implies that 
$$
\rel(\ue^\e, \Lambda(\Ce)) = \int_{\Ce} \sqrt{1 + \widehat G_3  +
\frac{2}{\eps} \widehat G_2 + \frac{1}{\eps^2} \left[\widehat G_1 + 
(\widehat E_2)^2(1+\mathcal O(\eps))\right]
} ~ \vert{\rm det}(\Lambda)\vert ~d\primoparametroastratto d\secondoparametroastratto.
$$
Here, for $i=1,2,3$,  
$\widehat G_i$ (respectively $\widehat E_2$) equals
$G_i$ (respectively $E_2$) with 
$(x,y)$ replaced by $\Lambda^{-1}(x,y) = (\primoparametroastratto,
\secondoparametroastratto)$, 
where we have $\alpha(\primoparametroastratto) = \pi(x,y)$ and $\secondoparametroastratto = d(x,y)$;
in particular $\Xed$ and $\Xet$ are evaluated at $(\primoparametroastratto, \secondoparametroastratto/\eps)$.
Remember also that $\vert {\rm det}(\Lambda)\vert =
\vert 1 - \kappa \secondoparametroastratto\vert$, 
$\kappa$ being the curvature of $\saltou$ at
$\alpha(\primoparametroastratto)$.  
Making the change of variables $\secondoparametroastratto/\eps \to \secondoparametroastratto$ we get
$$
\rel(\ue^\e, \Lambda(\Ce)) = \int_{\dommap} \sqrt{\eps^2 + \eps^2 \widehat G_3  +
2\eps \widehat G_2 +  \left[\widehat G_1 + (\widehat E_2)^2(1+\mathcal 
O(\eps))\right]
} ~ \vert 1 - \eps \kappa \secondoparametroastratto\vert 
~d\primoparametroastratto d\secondoparametroastratto, 
$$
where now $\Xed$ and $\Xet$ are evaluated at $(\primoparametroastratto,\secondoparametroastratto)$,
and we notice that the term $\mathcal O(\eps)$ is unaffected
by the variable change.

Hence, by our regularity assumption on $\osaltou$ and \eqref{eq:lucia},
 we deduce
\begin{equation}\label{eq:fonda}
\lim_{\eps \to 0^+}
\rel(\ue^\e, \Lambda(\Ce)) = \int_{\dommap} \sqrt{
\widehat G_1 + (\widehat E_2)^2
} ~ 
~d\primoparametroastratto d\secondoparametroastratto.
\end{equation}
{}From \eqref{eq:par_graph} it follows
\begin{displaymath}
D\mapmin =
\begin{bmatrix} 
		1 					 &  0                 
\\ 
		\partial_\primoparametroastratto \Xed & \partial_\secondoparametroastratto \Xed 
\\ 
		\partial_\primoparametroastratto \Xet & \partial_\secondoparametroastratto \Xet
	\end{bmatrix},
\end{displaymath}
so that using the area formula 
\begin{displaymath}
\begin{aligned}
\H^2(\widehat \Sigma^{\e})
=&\int_D \sqrt{{\rm det}(DX_\e ^T  DX_\e)}~d\primoparametroastratto\,d\secondoparametroastratto
\\
=& \int_D \sqrt{(\partial_\secondoparametroastratto \Xed)^2 +(\partial_\secondoparametroastratto \Xet)^2
+(\partial_t \Xed\partial_\secondoparametroastratto \Xet - \partial_t \Xet \partial_\secondoparametroastratto \Xed)^2}
~dt\,d\secondoparametroastratto,
\end{aligned}
\end{displaymath}
which coincides with the right hand side of \eqref{eq:fonda}. 
This shows that for any $\e \in (0,\overline \mu)$
$$
\lim_{\eps \to 0^+}\rel(\ue^\e, \Lambda(\Ce))=\H^2(\widehat \Sigma^\e).
$$
Recalling \eqref{convemu}, by a diagonalization process we can choose $(\mu_\eps)_\eps$
such that, defining $$\ue(x,y):=\ue^{\mu_\eps}(x,y), \quad \quad  (x,y)\in  \Lambda(\Ce),$$
we get
$$
\lim_{\eps \to 0^+}\rel(\ue, \Lambda(\Ce))=\H^2(\immmapmin).
$$
This concludes the proof of \eqref{eq:cruc} and hence of \eqref{eq:lim_ue}.
Inequality \eqref{limsup} follows by observing that ${\rm Lip}(\Om;\R^2)\subset\DM$ 
(see \eqref{eq:space_D}) and applying 
Lemma \ref{lem:relaxation}.
\qed

\section{Parametric case}\label{sec:general}
In this section we relax the hypotheses of Theorem \ref{teo:graph_main}, 
in order to allow area-minimizing surfaces 
not of graph-type and possibly self-intersecting. 

\subsection{Hypotheses on $\mappa$ and statement for the parametric case}
We consider a map $\mappa=(\mappauno, \mappadue)$ satisfying condition $({\rm u}1)$ and:

\begin{itemize}
\item[(\hp 2)]  \begin{itemize}
  \item[-] $\mappa \in \C^1(\Omega \setminus \osaltou; \R^2)$;  
  \item[-] $\M(\grad \mappa) \in L^1(\Om\setminus \osaltou; \R^6 )$;
  \item[-] for $\zeta>0$ small enough, denoting by $B^a_\zeta$ (respectively $B^b_\zeta$) the open disk 
centered at $\alpha(a)$ (respectively $\alpha(b)$) with radius $\zeta$, we have
 $\mappauno \in W^{1,2}\left(\Om\setminus \overline{J_\mappa \cup B^a_\zeta \cup B^b_\zeta}\right)$;
  \item[-] $\mappadue \in W^{1,2}\left(\Om\setminus \osaltou\right)$.
  \end{itemize}
\item[(\hp 3)] The two traces 
$\gammapm$ belong to $\C\left(\ab;\R^2\right) \cap BV\left(\ab; \R^2\right)$ and \eqref{eq:estremi} and \eqref{eq:interni} hold.
\item[(\hp 4)] $\Gamma=\Gamma[\map]$, defined as in \eqref{eq:Gammau}, 
is such that the image $\immmapmin$ of an area-minimizing 
disk-type solution of the Plateau's problem admits a semicartesian parametrization
with domain $D$ (Definition \ref{def:semicart_par}) satisfying the two following conditions:
\begin{itemize}
\item[-]$(a,b)\times\{0\}\subset {\rm int}(\dommap)$,
\item[-]if $\sigma^+ \notin {\rm Lip}([a,b))$, near the point $(a,0)$
the graph of $\sigma^+$ is of the form $\{(\tau(\spa), \spa)\}$, for $\vert
\spa\vert$ 
small enough,
where
\begin{equation}\label{eq:taus}
\tau(\spa)= a+\alpha_2 \spa^2 + o(\spa^2)
\end{equation}
with $\alpha_2 > 0$. The analogue holds near the point $(b,0)$,
 and similar conditions also for $\sigma^-$.
\end{itemize}

\end{itemize}

Conditions on $\Gamma $ ensuring that $\immmapmin$ admits 
a semicartesian parametrization are given in Section \ref{sec:semicart}.

\begin{Remark}
\textup{Theorem \ref{teo:general_main} remains valid if we 
exchange the hypotheses on the two components of $\mappa$,
that is if we ask $\mappauno \in W^{1,2}\left(\Om\setminus\osaltou\right)$ and 
$\mappadue\in W^{1,2}\left(\Om\setminus \overline{J_\mappa \cup B^a_\zeta \cup B^b_\zeta}\right)$ }.
\end{Remark}

An example of map satisfying $({\rm u}1)$, (\hp 2)-(\hp 4) is given in 
Example \ref{exa:cerchio} below.

\begin{Remark}\label{rem:W12}\textup{
The first two items of 	hypothesis (\hp 2) guarantee that $\mappa \in {\rm D}\left(\Om\setminus\osaltou;\R^2\right)$. 
We observe that any 
$\mappav \in W^{1,2}(\Om \setminus J_{\mappav};\R^2)$ satisfies (\hp 2);
 the converse conclusion is false,
as shown by the map described in Example \ref{exa:cerchio}.  
We also observe that assuming in (\hp 2) the weaker condition
$u_1 \in W^{1,2}_{\rm loc}\left(\Omega \setminus \osaltou\right)$ is not 
enough for our proof to work, since we need $\mappa^\mu_\eps$ to be, 
in the intermediate region, of class $W^{1,2}\left(\Lambda(R_\eps\setminus
D_\eps^\mu); \R^2\right)$, see
the expression in step 2 below.
}
\end{Remark}

\begin{Theorem}\label{teo:general_main}
Suppose that $\mappa$ satisfies assumptions \textup{$({\rm u}1)$, (\hp 2)-(\hp 4)}.
Then there exists a sequence 
\begin{equation*}\label{ueW}
(\ue)_\eps \subset W^{1,2}(\Om;\R^2)
\end{equation*}
converging to $\mappa$ in $L^1(\Omega; \R^2)$ as $\eps \to 0^+$ 
satisfying \eqref{eq:lim_ue}. Moreover \eqref{limsup} holds.
\end{Theorem}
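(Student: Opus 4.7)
The plan is to adapt the three-region construction of Theorem \ref{teo:graph_main} (outer, intermediate, inner), replacing the graph-parametrization of $\immmapmin$ by the abstract semicartesian parametrization $X(t,s)=(t,X_2(t,s),X_3(t,s))$ provided by hypothesis (\hp 4). The sequence $(\ue)$ will be built by the same template: $\ue=\mappa$ outside a tubular neighbourhood $\Lambda(\rettangolo_\eps)$ of $\osaltou$; a linear stretching $T_\eps^\pm$ that ``opens the fracture'' in an intermediate annular layer $\Lambda(\rettangolo_\eps^\pm\setminus\Ce^\pm)$; and in the inner region $\Lambda(\Ce)$ the formula $\ue(x,y)=(X_{\mu_\eps,2}(t(x,y),d(x,y)/\eps),X_{\mu_\eps,3}(t(x,y),d(x,y)/\eps))$ of \eqref{eq:ueintro}, where $(X_\mu)$ is a suitable regularization of $X$. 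The area computation in the inner region can be copied almost verbatim from {\tt step 9} of the proof of Theorem \ref{teo:graph_main}, since it uses only that $X$ is semicartesian and $W^{1,2}$, and it delivers the term $\H^2(\immmapmin)$ in the limit via the area formula and \eqref{eq:fonda}. The outer contribution tends to $\rel(\mappa,\Om\setminus\osaltou)$ by definition, and the intermediate contribution still vanishes because $\grad \Phi_\eps^\pm$ remains uniformly bounded in $\eps$ while the area of $\Lambda(\rettangolo_\eps^\pm\setminus\Ce^\pm)$ is $O(\eps)$; here we use $\M(\grad\mappa)\in L^1$ from (\hp 2) in place of the $L^\infty$ bound used before.

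The key modification, and in my view the main obstacle, is ensuring $\ue\in W^{1,2}(\Om;\R^2)$ rather than merely $\rm{Lip}$. Two sources of trouble appear, both concentrated near the crack tips $\alpha(a),\alpha(b)$. First, since the boundary functions $\sigma^\pm$ of the parameter domain $\dommap$ need not be Lipschitz up to $t=a,b$, the composite map $\ue$ defined by $X_{\mu_\eps}$ would blow up at the tips; one must therefore enlarge (or modify) $\dommap$ and correspondingly redefine $X$ in a small region near $(a,0)$ and $(b,0)$, exploiting the nondegeneracy \eqref{eq:taus}. The parabolic behaviour $\tau(s)=a+\alpha_2 s^2+o(s^2)$ is what forces the inverse $\sigma^+(t)\sim\sqrt{(t-a)/\alpha_2}$, whose gradient is square-integrable even though unbounded; this is precisely the regularity that yields $W^{1,2}$ and not $W^{1,\infty}$. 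This step corresponds to {\tt step 1} mentioned in the introduction: a surgery of $D$ near its two corner points followed by a smoothing, producing a domain with locally Lipschitz boundary everywhere except at the tips, on which $X$ extends as a $W^{1,2}$ map.

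Second, on the source side, hypothesis (\hp 2) only gives $\mappauno\in W^{1,2}$ away from small disks $B^a_\zeta\cup B^b_\zeta$ around the tips. A further localized regularization of $\ue$ inside $\Lambda(B^a_\zeta\cup B^b_\zeta)$ is needed (the {\tt step 2} of the introduction), which can be done by interpolating $\ue$ toward a bounded smooth function on a thin shell and showing that the resulting extra area contribution vanishes as $\eps,\zeta\to 0^+$. The compatibility of this surgery with the global $W^{1,2}$ requirement is the delicate point, because the two surgeries (in parameter space and in source space) must match along the common boundary of the inner and intermediate regions in such a way that $\ue$ has no jumps; this is guaranteed by the fact that $X$ parametrizes $\immmapmin$ and therefore its values on $\partial D$ agree with the traces $\gamma^\pm$ of $\mappa$ lifted by $\Lambda$.

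Once $\ue\in W^{1,2}(\Om;\R^2)$ is established, Remark \ref{rem:luc} together with the fact that $X\in W^{1,2}(D;\R^3)$ is $L^\infty$ after the surgery yields $\ue\to\mappa$ in $L^1$. Summing the three contributions, passing to the limit $\eps\to 0^+$ and using \eqref{convemu} in a diagonal argument through the parameter $\mu=\mu_\eps$, we obtain \eqref{eq:lim_ue}. Finally, \eqref{limsup} follows because $W^{1,2}(\Om;\R^2)\subset\DM$ and the relaxation of $\area$ from $\DM$ coincides with $\rel$ (Lemma \ref{lem:relaxation}).
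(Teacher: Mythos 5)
Your overall plan matches the paper's: adapt the three-region construction, perform a surgery of $D$ and $X$ near the parameter-space corners $(a,0),(b,0)$, do a further localized regularization in small source-space disks around the crack tips, diagonalize in $\mu=\mu_\eps$, and conclude with Lemma \ref{lem:relaxation}. However, there is a concrete error in your justification of the parameter-space surgery. You claim that $\sigma^+(t)\sim\sqrt{(t-a)/\alpha_2}$ has square-integrable derivative, and that this is "precisely the regularity that yields $W^{1,2}$." This is false: $\dot\sigma^+(t)\sim \tfrac{1}{2\sqrt{\alpha_2(t-a)}}$, so $\int_a^{a+\delta}|\dot\sigma^+|^2\,dt\sim\int_a^{a+\delta}\frac{dt}{t-a}=+\infty$. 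The derivative of $\sigma^+$ is $L^1$ but \emph{not} $L^2$ near the tips, and since the Jacobian of the stretching map $T^\pm_\eps$ contains $\dot\sigma$, leaving the tips untouched would prevent the intermediate-region estimate from closing. Moreover you describe the outcome of the surgery as "a domain with locally Lipschitz boundary everywhere except at the tips," but that is exactly what the original $D$ already has (by definition of semicartesian parametrization), so the surgery as you describe it achieves nothing.

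The paper's surgery does more: it replaces $\sigma^+$ by a linear function $\ell_\mu(t)=(t-a)/\mu$ on $[a,t_\mu)$, producing $\sigma^+_\mu\in{\rm Lip}([a,b])$ \emph{globally Lipschitz including at $t=a$}. The role of the nondegeneracy \eqref{eq:taus} is not to supply $L^2$ regularity of $\dot\sigma^+$ at the tips, but to guarantee that the line $\ell_\mu$ meets $\partial D$ at some $t_\mu>a$, so the surgery is well defined, and that $\H^2(\Sigma_\mu)\to\H^2(\immmapmin)$ as $\mu\to0^+$. Only \emph{after} this globalization of the Lipschitz bound does $|\grad T_\eps^\mu|$ become uniformly bounded for fixed $\mu$, and only then does the separate source-space regularization $\psi_\eps^\mu$ inside the disks $B^a_{\delta\eps/2}, B^b_{\delta\eps/2}$ (with the estimate $|\grad\psi^\mu_\eps|\leq C/\eps$) give an asymptotically negligible contribution. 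The $W^{1,2}$-rather-than-$W^{1,\infty}$ feature of the resulting $\ue$ comes from $\mappa$ itself being only $W^{1,2}$ via (\hp 2), not from any surviving singularity of $\partial D$. If you correct this misconception, the rest of your sketch aligns with the paper's proof.
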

\subsection{Proof of Theorem \ref{teo:general_main}}

As we have already remarked, hypothesis (\hp 2) guarantees that 
$\mappa \in \DMjump$
and hence  the expression $\rel(\mappa, \Om\setminus\osaltou)$ 
in (\ref{eq:lim_ue}) is meaningful.

\smallskip
To prove the theorem, we follow the line of 
reasoning of the proof of
 Theorem \ref{teo:graph_main}. However
 we now have to overcome two different problems. 
 More precisely, 
\begin{itemize}
\item[-] the derivative of $\sigma^\pm$, whose graphs form the boundary of the 
domain $\dommap$,
could be unbounded at $\ppa=a$ and $\ppa=b$; this implies that
also $\vert\grad T_\eps\vert$ could be unbounded;
\item[-] near the crack tips the map $\mappa$ is not regular enough
to guarantee straightforwardly that $\ue$ is sufficiently regular.
\end{itemize}

Also in this case, we split the proof into various steps.
In the first step 
 we construct a family of surfaces $\Sigma_\mu$ approximating $\immmapmin$ and
parametrized on suitable domains $\dommap^\e\subseteq \dommap$
bounded by the graphs of $\sigma_\mu^\pm\in {\rm Lip}([a,b])$.

\medskip 
{\tt step 1}. 
If $\sigma^\pm \in {\rm Lip}(\ab)$, we do not need to approximate $\Sigma$, thus we can pass directly
to the next step  with $X_\mu=X$ and $\dommap^\mu=\dommap$.
Hence we can assume $\sigma^\pm\in {\rm Lip}_{\rm loc}(\ab)\setminus{\rm Lip}(\ab)$.
 
Let us suppose for example that the graph of $\sigma^+$ near the point $(a,0)$ is in the form \eqref{eq:taus} 
(while the derivative of $\sigma^+$ near $b$ is bounded and $\sigma^- \in {\rm Lip}(\ab)$)
\footnote{If also the other derivatives blow up, the construction 
of $X_\mu$ and $\dommap^\mu$ is similar.}, see Figure \ref{fig:domain_mu}. 
Then we  modify the domain $\dommap$
and the map $X$ near the point $(a,0)$ as follows.

For a small positive constant $c$ we adopt the following notation:
\begin{itemize}
 \item[-] $\ell_c$ is the portion of the line 
  over $\ab$
passing through $(a,0)$ with angular coefficient $c^{-1}$,
that is:
  $$
  \ell_c(t)=\frac{t-a}{c}, \quad \ppa \in \ab;
  $$   
 \item[-] $P_c$ is the first intersection point of $\ell_c$ with $\partial \dommap$ and its 
coordinates are denoted by $(t_c, \sigma^+(t_c))$ ($t_c>a$ thanks to our assumption about the graph of $\sigma^+$). 
\end{itemize}

\begin{figure}[h!]
\centering
\def\svgwidth{10cm}
\begingroup%
  \makeatletter%
  \providecommand\color[2][]{%
    \errmessage{(Inkscape) Color is used for the text in Inkscape, but the package 'color.sty' is not loaded}%
    \renewcommand\color[2][]{}%
  }%
  \providecommand\transparent[1]{%
    \errmessage{(Inkscape) Transparency is used (non-zero) for the text in Inkscape, but the package 'transparent.sty' is not loaded}%
    \renewcommand\transparent[1]{}%
  }%
  \providecommand\rotatebox[2]{#2}%
  \ifx\svgwidth\undefined%
    \setlength{\unitlength}{245.70092773bp}%
    \ifx\svgscale\undefined%
      \relax%
    \else%
      \setlength{\unitlength}{\unitlength * \real{\svgscale}}%
    \fi%
  \else%
    \setlength{\unitlength}{\svgwidth}%
  \fi%
  \global\let\svgwidth\undefined%
  \global\let\svgscale\undefined%
  \makeatother%
  \begin{picture}(1,0.56473802)%
    \put(0,0){\includegraphics[width=\unitlength]{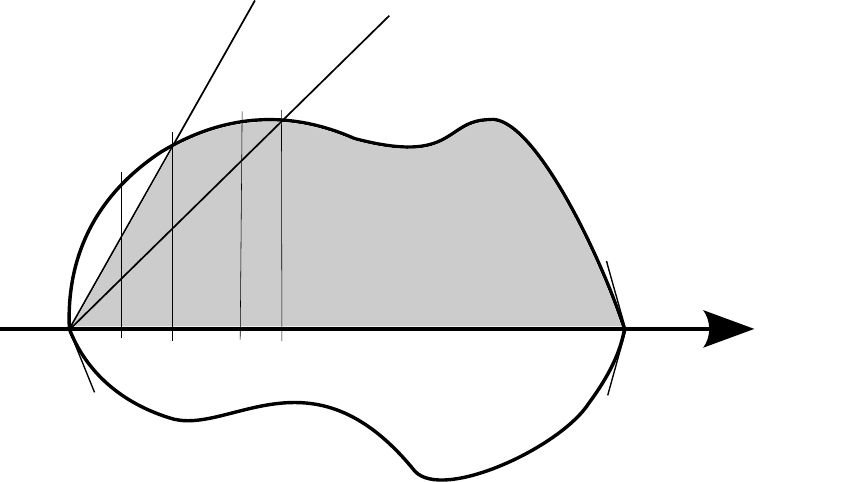}}%
    \put(0.05606413,0.1420096){\color[rgb]{0,0,0}\makebox(0,0)[lb]{\smash{$a$}}}%
    \put(0.20231223,0.14952821){\color[rgb]{0,0,0}\makebox(0,0)[lb]{\smash{$t_\mu$}}}%
    \put(0.79389323,0.18846483){\color[rgb]{0,0,0}\makebox(0,0)[lb]{\smash{$t$}}}%
    \put(0.14639303,0.15555207){\color[rgb]{0,0,0}\makebox(0,0)[lb]{\smash{\begin{footnotesize}$t$
\end{footnotesize}}}}%
    \put(0.33220342,0.14957488){\color[rgb]{0,0,0}\makebox(0,0)[lb]{\smash{$t_{2\mu}$}}}%
    \put(0.28913352,0.5159337){\color[rgb]{0,0,0}\makebox(0,0)[lb]{\smash{$\ell_{\mu}$}}}%
    \put(0.45111622,0.51634082){\color[rgb]{0,0,0}\makebox(0,0)[lb]{\smash{$\ell_{2\mu}$}}}%
    \put(0.46273709,0.25452155){\color[rgb]{0,0,0}\makebox(0,0)[lb]{\smash{$D^\mu$}}}%
    \put(0.13728338,0.23227001){\color[rgb]{0,0,0}\makebox(0,0)[lb]{\smash{\begin{tiny}
$p(t)$
\end{tiny}}}}%
    \put(0.13913345,0.28625229){\color[rgb]{0,0,0}\makebox(0,0)[lb]{\smash{\begin{tiny}
$r(t)$
\end{tiny}}}}%
    \put(0.13986897,0.34685579){\color[rgb]{0,0,0}\makebox(0,0)[lb]{\smash{\begin{tiny}
$q(t)$
\end{tiny}}}}%
    \put(0.28958205,0.15521012){\color[rgb]{0,0,0}\makebox(0,0)[lb]{\smash{\begin{footnotesize}$\tau$
\end{footnotesize}}}}%
    \put(0.27664316,0.36452208){\color[rgb]{0,0,0}\makebox(0,0)[lb]{\smash{\begin{tiny}
$p(\tau)$
\end{tiny}}}}%
    \put(0.2235298,0.44205534){\color[rgb]{0,0,0}\makebox(0,0)[lb]{\smash{\begin{tiny}
$r(\tau)=q(\tau)$
\end{tiny}}}}%
    \put(0.73976882,0.14252446){\color[rgb]{0,0,0}\makebox(0,0)[lb]{\smash{$b$}}}%
  \end{picture}%
\endgroup%

\caption{\small{
Modification of the domain $\dommap$ when the gradient 
of $\sigma^+$ blows up (in this case only near $\ppa=a$).}
}
\label{fig:domain_mu}
\end{figure}

For every  $\mu>0$ small enough, we define 
\begin{displaymath}
\sigma^+_\e:=
\begin{cases}
\ell_\e\, & \mbox{ in } [a, t_\e), \\
\sigma^+ \, & \mbox{ in }  [t_\e, b],
\end{cases}
\end{displaymath}
and
$$
\dommap^\e:=\{(t,s): \, t \in \ab, \, \sigma^-(t)\leq s\leq \sigma^+_\e(t)\}.
$$
In order to define the map $X_\mu$ on $\dommap^\mu$, we need to consider also the line $\ell_{2\mu}$
and the corrisponding intersection point $P_{2\mu}$.
For any $t\in [a,t_{2\mu}]$, we denote by $p(t)$, $r(t)$ and $q(t)$ the points with first coordinate $t$
on the segment bounded by $(a,0)$ and $P_{2\mu}$, on $\partial \dommap^\mu$ and on $\partial\dommap$ respectively (of course, $r(t)=q(t)$
for $t\in [t_\mu,t_{2\mu}]$), see Figure \ref{fig:domain_mu}. 

Thus we define 
$$X_\e:\dommap^\e\to\R^3$$ 
as follows: 
\begin{displaymath}
X_\e(t,s):=
\begin{cases}
\phi_\e(t,s)\, &\mbox{ if }t\in[a,t_{2\e}),\,  s\geq\ell_{2\e}(t), \\
X(t,s)\, &\mbox{ otherwise}, 
\end{cases}
\end{displaymath}
where $\phi_\e$ is linear on the vertical lines, $\phi_\e(p(t))=X(p(t))$ and $\phi_\e(r(t))=X(q(t))$.    
We observe that $X_\mu$ is still a semicartesian parametrization.

Denoting by $\Sigma_\mu$ the image of $\dommap^\mu$ through $X_\mu$,
we have $\H^2(\Sigma_\mu) \to \H^2(\Sigma) $ as $\mu \to 0^+$ (\cite[\textsection 305]{Nit:89}). 

\medskip
{\tt step 2}.

For every $\mu>0$ small enough, let us define the sequence $(\mappa^\mu_\eps)$ as follows:

\begin{displaymath}
u_{\eps 1}^\mu :=
\begin{cases}
\mappauno & \mbox{ in } \Om \setminus (\Lambda(\rettangolo_\eps)\cup 
B^a_{\delta\eps/2}\cup B^b_{\delta\eps/2} ),
\\
\mappa(T^\mu_\eps)_1 & \mbox{ in } \Lambda(\rettangolo_\eps \setminus \dommap^\mu_\eps) \setminus (B^a_{\delta\eps/2}\cup B^b_{\delta\eps/2} ),\\
X_{\mu 2}(t,s/\eps) & \mbox{ in }  \Lambda(\dommap_\eps^\mu)\setminus (B^a_{\delta\eps/2}\cup B^b_{\delta\eps/2} ),\\
\psi_\eps^\mu & \mbox{ in }  B^a_{\delta\eps/2}\cup B^b_{\delta\eps/2} ,\\
\end{cases}
\end{displaymath}
and
\begin{displaymath}
u_{\eps 2}^\mu :=
\begin{cases}
\mappadue & \mbox{ in } \Om \setminus \Lambda(\rettangolo_\eps),\\
\mappa(T^\mu_\eps)_2 & \mbox{ in } \Lambda(\rettangolo_\eps \setminus \dommap^\mu_\eps) ,\\
X_{\mu 3}(t,s/\eps) & \mbox{ in }  \Lambda(\dommap_\eps^\mu),\\
\end{cases}
\end{displaymath}
where:
\begin{itemize} 
\item[-] $\Lambda$, $\rettangolo$, $\ppa=\ppa(x,y)$, $\spa=\spa(x,y)$ are defined 
as in {\tt step 6} of the proof of Theorem \ref{teo:graph_main};
\item[-] $T^\mu_\eps$ is defined as at (\ref{eq:ue2}), with $\dommap^\mu$
in place of $\dommap$ and $\sigma_\mu^\pm$ instead of $\pm \sigma$;
\item[-] $B^a_{\delta\eps/2}$ (respectively $B^b_{\delta\eps/2}$) is
the disk centered at $\alpha(a)$ (respectively $\alpha(b)$) with radius $\delta\eps/2$;

\item[-] the function $\psi^\mu_\eps$ is linear along the radii and is equal to $\mappauno$ in $\alpha(a)$ and
to $u_{\eps 1}^\mu$ on $\partial B^a_{\delta\eps/2}$
(similarly in the other end point of the jump).
\end{itemize}
By construction and thanks to 
hypotheses ({\rm u}1), (\hp 2)-(\hp 4), the sequence $(\ue^\mu)$ is 
in $W^{1,2}(\Om;\R^2)$.

We have
\begin{itemize}
\item[-] 
$\rel\left(\ue^\mu,\Om\setminus\left(\Lambda(\rettangolo_\eps)
 \cup B^a_{\delta\eps/2 \cup B^a_{\delta\eps/2}}\right)\right) 
=  \rel\left(\mappa,\Om\setminus\left(
\Lambda(\rettangolo_\eps)
 \cup B^a_{\delta\eps/2} \cup B^a_{\delta\eps/2}\right)\right) $ by definition;
\item[-]
$\rel\left(\ue^\mu,\Lambda(\rettangolo_\eps
\setminus\Ce^\mu)\setminus
(B^a_{\delta\eps/2} \cup B^a_{\delta\eps/2})\right)\to 0$ as $\eps\to 0^+$
because of the estimates done in {\tt step 8}
 of the proof of Theorem \ref{teo:graph_main} since $|\grad T^\mu_\eps|$ is bounded;
\item[-] $\rel\left(\ue^\mu,\Lambda(\Ce^\mu)\setminus
(B^a_{\delta\eps/2} \cup B^a_{\delta\eps/2})\right)\to \H^2(\Sigma_\mu)$ as $\eps\to 0^+$: indeed 
the computations done in {\tt step 9} of Theorem \ref{teo:graph_main}
work, since they depend only
on the fact that the parametrization is in semicartesian form.
\end{itemize}
Thus 
\begin{displaymath}
\lim_{\eps \to 0^+}
\rel\left(
\ue^\mu , \Om \setminus (\saltou \cup  
((B^a_{\delta\eps/2}\cup B^b_{\delta\eps/2})\setminus \Lambda(\rettangolo_\eps))
\right)= \rel(\mappa, \Om \setminus \osaltou)+\H^2(\Sigma_\mu).
\end{displaymath}

In order to compute $\rel (\ue^\mu, B^a_{\delta\eps/2})$ we observe that 
\begin{equation}\label{eq:grad_psi}
|\grad \psi_\eps^\mu|\leq \frac{C}{\eps}
\end{equation}
for some constant $C>0$ independent of $\eps$ and $\mu$.
Thus for a possibly different value of the constant $C$ 
(still independent of $\eps$ and $\mu$),
\begin{displaymath}
\begin{split}
&\rel(\ue^\mu, B^a_\eps \setminus\osaltou)\\
&\leq C\int_{B^a_{\delta\eps/2}\setminus \osaltou} \left[ 1+ |\grad \psi^\mu_\eps| + |\grad u_{\eps 2}^\mu | + 
| \partial_x\psi_\eps^\mu ~\partial_yu_{\eps 2}^\mu-  
\partial_y\psi_\eps^\mu ~\partial_x u_{\eps 2}^\mu| \right]~dx\, dy\\
&\leq C \left[ \mathcal{O}(\eps^2) + \mathcal{O}(\eps)\right]
+ (1+C)\int_{B^a_{\delta\eps/2}\setminus \osaltou}|\grad u_{\eps 2}^\mu| 
~dx\, dy,
\end{split}
\end{displaymath}
where we have used \eqref{eq:grad_psi}.
Recalling that on $\Lambda(\dommap^\mu_\eps)$ we have $u_{\eps 2}(x,y)= X_{\mu 3}(t,s/\eps)$, the term
$$
\int_{\left(B^a_{\delta \eps/2}\cap \Lambda(\dommap^\mu_\eps)\right)\setminus \osaltou}|\grad u_{\eps 2}^\mu| 
~dx\, dy
$$
is negligible as $\eps \to 0^+$. 
On the other hand on $B^a_{\delta\eps/2}\setminus \Lambda(\rettangolo_\eps)$ we have $u_{\eps 2}^\mu=\mappadue$,
and thus
$$
\int_{B^a_{\delta\eps/2}\setminus \Lambda(\rettangolo_\eps)}|\grad u_{\eps 2}^\mu| dx\, dy=\mathcal{O}(\eps^2).
$$
{}Finally we get an analogous result also on $B^a_{\delta\eps/2}\cap\Lambda(\rettangolo_\eps \setminus \dommap^\mu_\eps)$
since here $u^\mu_{\eps 2}$ is defined as $(\mappa (T^\mu_\eps))_2$ and $T^\mu_\eps$ has bounded gradient and tends to the identity.  

Thus the area contribute on $B^a_{\delta\eps/2}$
is asymptotically negligible (and similarly on $B^b_{\delta\eps/2}$).

Finally, since $\H^2(\Sigma_\mu)$ tends to $\H^2(\immmapmin)$ as $\mu \to 0^+$, we can choose  
$\ue$ as $\ue^{\mu_\eps}$ for a suitable sequence $(\mu_\eps)$ 
converging to zero, so that we get
\eqref{eq:lim_ue}. Recalling that $W^{1,2}(\Om;\R^3)\subset\DM$ and applying Lemma \ref{lem:relaxation}
we obtain \eqref{limsup}.

\begin{Remark}
\textup{
If $\mappa$ satisfies $({\rm u}1)$, (\hp 2), (\hp 3)
and  $\Gamma[\mappa]$, defined as in \eqref{eq:Gammau}, 
 is  contained in a plane $\Pi$, then 
$$
\rel(\mappa, \Omega)= \rel(\mappa, \Omega \setminus \osaltou) + \H^2(\immmapmin).
$$
Indeed $\immmapmin$ is of course the portion of $\Pi$ bounded by $\Gamma$; 
moreover, thanks to the definition of $\Gamma$, the plane $\Pi$  
cannot be orthogonal to the versor $(1,0,0)$. Thus either the projection of $\immmapmin$
on the plane $\R^2_{(\ppa,\xi)}$ or its projection on $\R^2_{(\ppa,\eta)}$ 
is a domain with non-empty interior. On the symmetrization of this domain we can
define a semicartesian parametrization of $\immmapmin$ and, 
applying Theorem \ref{teo:general_main}, we find
$$
\rel(\mappa, \Omega)\leq \rel(\mappa, \Omega \setminus \osaltou) + \H^2(\immmapmin).
$$
On the other hand, in this case $\H^2(\immmapmin)=|D^s\mappa|(\Om)$ thus,
using relation (\ref{eq:altra_diseq}), we have also 
$$
\rel(\mappa, \Omega)\geq \rel(\mappa, \Omega \setminus \osaltou) + \H^2(\immmapmin).
$$
}
\end{Remark}

\section{On the existence of semicartesian parametrizations}\label{sec:semicart}
Now our goal is to state some conditions on $\Gamma$ 
which allow to construct a semicartesian parametrization
for the corresponding area-minimizing surface, in order 
to apply Theorem \ref{teo:general_main} for suitable maps $\mappa$.
Theorem \ref{prop:analytic} provides some sufficient conditions:
roughly, we shall assume that $\Gamma$ is the union of the graphs of two analytic
curves, joining in an analytic way and satisfying a further assumption
of non degeneracy. 
We stress that
 the analiticity forces the 
gradient of $\mappa$ to blow up near the crack tips.

The proof of Theorem \ref{prop:analytic} is quite involved and it is postponed to
section \ref{sec:par}.

\medskip

We start with the following definition.
\begin{Definition}[\textbf{Condition (A)}]\label{def:condA}
\textup{
We say that a curve $\Gamma$ union of two graphs satisfies condition (A) if
there exists an injective 
{\it analytic}  map 
$$
\parabordo  = (\parabordo_1,\parabordo_2,\parabordo_3)
: \partial\disco \to \R_t \times \target
$$
such 
 that
$$
\Gamma = 
\parabordo(\partial \disk)
$$
where, 
still denoting for simplicity  by $\parabordo$ the composition
$\parabordo \circ \parabordopalla$ (see \eqref{eq:bordo_palla}), and using the  
prime for differentiation with respect to 
$\theta$, the following properties are satisfied:
\begin{equation}\label{eq:nondege}
\begin{aligned}
& |g'(\theta)|\neq 0, \qquad \theta \in [0,2\pi),
\\ 
& \parabordo_1' < 0 \mbox{ in } (\theta_{\rm n},\theta_{\rm s}), 
\\
& \parabordo_1' > 0 \mbox{ in } (\theta_{\rm s},\theta_{\rm n}),
\\
& \parabordo_1''(\theta_{\rm s})>0, \qquad \parabordo_1''(\theta_{\rm n})<0.
\end{aligned}
\end{equation}
}
\end{Definition}

Note carefully that the last three conditions involve the first component
of $g$ only.

Our  result is the following.

\begin{Theorem}[\textbf{Existence of semicartesian parametrizations}]\label{prop:analytic}
Let $\Gamma\subset \R^3$ be a curve union of the two graphs 
of $\gamma^\pm$ and satisfying condition \textup{(A)}. 
Then there exist
an analytic, connected, simply connected, and bounded set
$\dommap$ and a disk-type area-minimizing
solution  $X\in\C^\omega(\overline{\dommap}; \R^3)$
of the Plateau's problem
for the contour $\Gamma$, 
satisfying Definition \ref{def:semicart_par}, 
with  
$X$  free of 
interior branch points and of
boundary branch points.
Moreover,
\begin{itemize}
\item[(i)]
near the point $(a,0)$, the curve $\partial D$ 
is of the form $\{(\tau(\spa), \spa)\}$, for $|\spa| $ small enough,
with $\tau$ as in \ref{eq:taus}
and $\alpha_2 > 0$, and similarly near the point $(b,0)$;
\item[(ii)] 
the Lipschitz constant of $\sigma^\pm$ on a relatively compact
subinterval of $(a,b)$ is bounded
by the Lipschitz constant of the restriction of $\gamma^\pm$ on the 
same subinterval.
\end{itemize}
\end{Theorem}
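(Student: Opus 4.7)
The plan is to deduce the semicartesian parametrization from a Morse-theoretic study of the first component of a conformal disk-type parametrization, following the strategy outlined in the introduction. I would start from a conformal, disk-type, area-minimizing solution $Y:\overline{\disk}\to\R^3$ of the Plateau problem for $\Gamma$ (whose existence comes from classical results quoted in Section \ref{sec:app}). By condition (A) the boundary map $\parabordo$ is analytic and $|\parabordo'|\neq 0$, so the boundary regularity theory for minimal surfaces extends $Y$ analytically across $\partial \disk$ to a map $\extmapmin$ defined on an open set $\extdisk\supset\overline{\disk}$, giving an analytic minimal surface $\extimmmapmin=\extmapmin(\extdisk)\supset \immmapmin$. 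The nondegeneracy $|\parabordo'|\neq 0$ together with analyticity rules out boundary branch points (Gulliver-Lesley type theorem), while interior branch points of a minimizer are excluded by the Osserman-Gulliver argument combined with analyticity.

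I then introduce the height function
\begin{displaymath}
\height(\phu,\phd):=\extmapmin_1(\phu,\phd),\qquad (\phu,\phd)\in \extdisk,
\end{displaymath}
which is harmonic since $\extmapmin$ is a conformal minimal immersion. The key point, to be furnished by Theorem \ref{theo:trasv}, is that a local semicartesian parametrization exists around $(\phu,\phd)\in\overline{\disk}$ if and only if the tangent plane to $\extimmmapmin$ at $\extmapmin(\phu,\phd)$ is transverse to the $t$-axis, equivalently $\nabla\height(\phu,\phd)\neq 0$. Hence the whole problem reduces to proving that the only critical points of $\height|_{\overline{\disk}}$ are a unique minimum at $\parabordopalla(\theta_{\rm s})$ and a unique maximum at $\parabordopalla(\theta_{\rm n})$, both on $\partial\disk$.

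This Morse analysis is the principal obstacle. Interior critical points are excluded by a geometric/maximum-principle argument applied to the harmonic function $\height$ combined with the absence of branch points: zeros of $\nabla\height$ in $\disk$ correspond to zeros of the holomorphic function $\partial_\phu\extmapmin_1-i\partial_\phd\extmapmin_1$, and interior maxima or minima would contradict harmonicity. Boundary critical points are located by the strict monotonicity $\parabordo_1'<0$ on $(\theta_{\rm n},\theta_{\rm s})$ and $\parabordo_1'>0$ on $(\theta_{\rm s},\theta_{\rm n})$, which force the unique boundary minimum and maximum at $\theta_{\rm s}$ and $\theta_{\rm n}$ respectively, while $\parabordo_1''(\theta_{\rm s})>0$ and $\parabordo_1''(\theta_{\rm n})<0$ make them nondegenerate in the sense of Morse functions on manifolds with boundary. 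Interior saddles are then ruled out by applying the Morse relation for compact surfaces with boundary (Theorem \ref{teo:mors} in Appendix \ref{sec:appb}): with $\chi(\overline{\disk})=1$, the presence of one boundary minimum and one boundary maximum and no interior extrema leaves no room for interior saddles. This is precisely the step where the full strength of analyticity and condition (A) is used.

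Granted this transversality, Theorem \ref{theo:trasv} provides local analytic semicartesian parametrizations near every point of $\overline{\disk}$; these local inverses of $(\phu,\phd)\mapsto(\extmapmin_1,\text{second coordinate})$ are unique up to the choice of the $s$-variable, and simple connectedness of $\overline{\disk}$ allows one to glue them into a global analytic diffeomorphism $\varphi:\overline{\dommap}\to\overline{\disk}$ onto an analytic, connected, simply connected domain $\dommap\subset\R^2_{(t,s)}$ (this is the content of Section \ref{sec:global}). The composition $X:=\extmapmin\circ\varphi\in\C^\omega(\overline{\dommap};\R^3)$ is the required global semicartesian parametrization, free of interior and boundary branch points. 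Finally, properties (i) and (ii) are local consequences of the construction: near $\theta_{\rm s}$ the expansion $\parabordo_1(\theta)-a=\frac{1}{2}\parabordo_1''(\theta_{\rm s})(\theta-\theta_{\rm s})^2+o((\theta-\theta_{\rm s})^2)$ together with the analytic inversion afforded by Morse's Lemma transports into $\tau(s)=a+\alpha_2 s^2+o(s^2)$ with $\alpha_2>0$ (and analogously at $\theta_{\rm n}$); while on compact subintervals of $(a,b)$ the chain rule applied to the identity $(t,\sigma^\pm(t))=\varphi^{-1}(\parabordopalla(\theta^\pm(t)))$, combined with the fact that $\varphi$ is analytic and biLipschitz away from the two corners, yields the bound on the local Lipschitz constant of $\sigma^\pm$ in terms of that of $\gamma^\pm$.
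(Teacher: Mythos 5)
Your proposal tracks the paper's strategy: pass to the analytic extension $\extmapmin$ on $\extdisk\supset\overline\disk$, study the height function $\height=\extmapmin_1$, show $\grad\height$ never vanishes on $\overline\disk$, apply Theorem~\ref{theo:trasv} to get local semicartesian patches, and glue using simple connectedness (Section~\ref{sec:global}). Your use of the maximum principle for the harmonic function $\height$ to rule out interior extrema is in fact a genuine simplification of the paper's {\tt step~3}, which instead builds a competitor surface and invokes area-minimality.

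However, there are two substantive problems. First, a conceptual error: you write that ``the whole problem reduces to proving that the only critical points of $\height|_{\overline\disk}$ are a unique minimum at $\parabordopalla(\theta_{\rm s})$ and a unique maximum at $\parabordopalla(\theta_{\rm n})$.'' But if $\grad\height(0,\pm1)=0$, Theorem~\ref{theo:trasv} would \emph{fail} precisely at those points and no semicartesian patch would exist there. The correct target is stronger: $\grad\height\neq 0$ on \emph{all} of $\overline\disk$, including at $(0,\pm1)$. The points $(0,\pm1)$ are boundary extrema of $\height$ but must \emph{not} be critical points of $\height$ (only of $\height|_{\partial\disk}$). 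The paper establishes this in {\tt step~2} by a convex-hull/wedge argument at the two poles, and this is also what makes the hypothesis of Theorem~\ref{teo:mors} (that $\disk$ contains all critical points of $\height$) hold. Your sketch does not address this.

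Second, a gap: you invoke Theorem~\ref{teo:mors} to rule out interior saddles, but that theorem requires $\height$ to be a Morse function, i.e.\ all interior critical points nondegenerate. Harmonicity does not give this: a harmonic function can have degenerate ``monkey-saddle'' critical points (zeros of the holomorphic derivative $\partial_\phu\height - i\partial_\phd\height$ of order $\geq 2$). The paper devotes {\tt step~1} (Sard applied to the Gauss map, showing degenerate critical points of $\height$ are critical points of $\gauss$) and the entire {\tt step~6} (using that umbilical points of a non-planar minimal surface are isolated) to handle this. Without some version of this argument — or an alternative such as the argument principle applied to $\partial_\phu\height - i\partial_\phd\height$ with a careful computation of its winding number along $\partial\disk$ — the Morse-theoretic count is not justified. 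Finally, for part (i) your Morse-Lemma argument diverges from the paper's variational competitor construction in Proposition~5.3; it can probably be made to work (it hinges on the conformality of $X$ and on $\grad\height(0,\pm1)\neq 0$), but as written it is too terse to tell whether you have noticed these prerequisites.
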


\begin{Remark}\label{rk:simmetrization}
\textup{
The semicartesian parametrization provided by Theore \ref{prop:analytic} 
could not satisfy the condition
$$
(a,b)\times \{0\} \subset {\rm int}(\dommap).
$$
We can obtain a semicartesian parametrization
fulfilling condition (\hp 4) of Theorem \ref{teo:general_main} 
by symmetrizing the domain,  
as in 
{\tt step 4} of the proof of Theorem \ref{teo:graph_main}
}
\end{Remark}

{}From Remark \ref{rk:simmetrization} and Theorems \ref{teo:general_main} 
and \ref{prop:analytic} we get the following result.

\begin{Corollary}\label{cor:analytic}
Suppose that
 $\mappa$ satisfies $({\rm u}1)$, (\hp \textup{2}), (\hp \textup{3}) and
that  $\Gamma[\mappa]$
satisfies condition \textup{(A)}. Then there exists a sequence 
\begin{equation*}
(\ue)_\eps \subset W^{1,2}(\Om;\R^2)
\end{equation*}
converging to $\mappa$ in $L^1(\Omega; \R^2)$ as $\eps \to 0^+$ 
satisfying \eqref{eq:lim_ue}. Hence \eqref{limsup} holds.
\end{Corollary}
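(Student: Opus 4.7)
The plan is to combine Theorem \ref{prop:analytic} with Theorem \ref{teo:general_main}, the only nontrivial point being to show that the semicartesian parametrization supplied by Theorem \ref{prop:analytic} can be adjusted so as to fulfill hypothesis (\hp 4). By Theorem \ref{prop:analytic}, the area-minimizing disk-type surface $\immmapmin$ spanning $\Gamma[\mappa]$ admits an analytic semicartesian parametrization $X: \overline{D} \to \R^3$ with $D = \{(t,s) : t \in [a,b],\, \sigma^-(t) \leq s \leq \sigma^+(t)\}$, where $\sigma^\pm \in {\rm Lip}_{\rm loc}((a,b))$ satisfy \eqref{eq:sigmapm}, together with the quadratic crack-tip behavior (i) and the local Lipschitz control (ii).

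The first step is to symmetrize the parameter domain, as suggested in Remark \ref{rk:simmetrization} and in analogy with {\tt step 4} of the proof of Theorem \ref{teo:graph_main}. Precisely, I set
$$
\sigma(t) := \frac{\sigma^+(t) - \sigma^-(t)}{2}, \qquad m(t) := \frac{\sigma^+(t) + \sigma^-(t)}{2}, \qquad t \in [a,b],
$$
so that $\sigma > 0$ on $(a,b)$ by \eqref{eq:sigmapm}, and I introduce the symmetric domain
$$
\widetilde D := \{(t,s) : t \in [a,b],\, |s| \leq \sigma(t)\}
$$
together with the translated parametrization
$$
\widetilde X(t,s) := X(t, s + m(t)), \qquad (t,s) \in \widetilde D.
$$
Since the first component of $X$ is $t$, the same holds for $\widetilde X$, so $\widetilde X$ is semicartesian in the sense of Definition \ref{def:semicart_par}; its image is still $\immmapmin$. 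By construction $(a,b) \times \{0\} \subset {\rm int}(\widetilde D)$, so the first item of (\hp 4) is satisfied.

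The second step is to verify the quadratic boundary behavior \eqref{eq:taus} for $\widetilde D$. By property (i) of Theorem \ref{prop:analytic}, near $(a,0)$ both graphs of $\sigma^\pm$ are of the form $t = a + \alpha_2^\pm s^2 + o(s^2)$ with $\alpha_2^\pm > 0$, which means $\sigma^\pm(t) \sim \pm\sqrt{(t-a)/\alpha_2^\pm}$ for $t \to a^+$, and hence
$$
\sigma(t) = \frac{1}{2}\left(\sqrt{(t-a)/\alpha_2^+} + \sqrt{(t-a)/\alpha_2^-}\right) + o(\sqrt{t-a}),
$$
so that the graph of $\pm\sigma$ has the required form $\tau(s) = a + \widetilde\alpha_2 s^2 + o(s^2)$ with $\widetilde\alpha_2 > 0$; the analogous conclusion holds near $(b,0)$. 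The local Lipschitz regularity of $\pm\sigma$ on compact subintervals of $(a,b)$ follows from property (ii) of Theorem \ref{prop:analytic}, so (\hp 4) is fulfilled.

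Having verified $({\rm u}1)$, (\hp 2), (\hp 3), and (\hp 4) for $\mappa$ and $\widetilde X$, I invoke Theorem \ref{teo:general_main} to obtain a sequence $(\ue) \subset W^{1,2}(\Om;\R^2)$ converging to $\mappa$ in $L^1(\Om;\R^2)$ and satisfying \eqref{eq:lim_ue}; the upper bound \eqref{limsup} follows. The main (and only) substantive step is the symmetrization argument above, which is essentially a repackaging of the construction already used in {\tt step 4} of Theorem \ref{teo:graph_main}; no new estimates are required, and I expect no serious obstacle beyond the careful bookkeeping of the asymptotic behavior of $\sigma^\pm$ near the crack tips.
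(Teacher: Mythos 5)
Your proposal is correct and follows exactly the paper's route: apply Theorem \ref{prop:analytic} to obtain a semicartesian parametrization of $\immmapmin$, symmetrize the parameter domain as indicated in Remark \ref{rk:simmetrization} (so that $(a,b)\times\{0\}$ lies in the interior and the quadratic crack-tip behavior \eqref{eq:taus} persists, verifying hypothesis (\hp 4)), and then invoke Theorem \ref{teo:general_main}. A minor cosmetic point: by Theorem \ref{prop:analytic}(i) the two branches of $\partial D$ near each crack tip come from a single analytic graph $\{(\tau(s),s)\}$ and therefore share one coefficient $\alpha_2$, so your superscripts $\alpha_2^\pm$ are unnecessary; this does not affect the validity of your symmetrization computation.
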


\begin{Remark}\label{rem:order}\rm
Before proving Theorem \ref{prop:analytic}, the following comments are in order.
\begin{itemize}
\item[-] Note the special  structure of the curve 
$\Gamma[\mappa]$ in Corollary \ref{cor:analytic}:  it is not the graph of 
an $\R^2$-valued  function over $\osaltou$,
but it is instead the union of 
two analytic graphs, {\it joining together in an
analytic way}, of the two $\R^2$-valued functions $\mappa^\pm$.
Since globally the map $\parabordo$ is required to be 
analytic, it results that $\mappa^\pm$ are not independent. 
\item[-] The nondegeneracy
requirement \eqref{eq:nondege} of $\parabordo$ 
at the south and north poles are necesssary
in order the proof 
of Theorem \ref{lem:piani} to work. In particular, 
it is needed to ensure that the restriction of the height function
$h$ to $\partial B$ is a Morse function ({\tt step 4} of the proof of Theorem \ref{lem:piani}). 
\item[-] As we shall see, the analiticity requirement 
 in hypothesis (\hp 4) is needeed in order to prevent 
the existence of boundary branch points in a disk-type
solution of the Plateau's problem with boundary $\Gammau$.
\end{itemize}
\end{Remark}

\begin{Example}[\textbf{Maps satisfying the hypotheses of Theorem
\ref{prop:analytic}
}]\label{exa:cerchio}\rm
In this example we present a map $\mappa$ satisfying $({\rm u} 1)$, (\hp 2) and  (\hp 3)
and whose $\Gamma=\Gamma[\mappa]$ satisfies condition 
(A) and hence, from Theorem \ref{prop:analytic}, also condition (\hp 4). 
The map $\mappa$ is defined so that $\Gamma$ is a perturbation of the circle: indeed $\Gamma$ is exactly 
the boundary of the unit disk contained in the plane $\R^2_{(t,\xi)}$ if $\mappadue$ is identically zero.
Consequently, the nondegeneracy
conditions expressed in \eqref{eq:nondege} hold. It is clear that,
starting from $\mappa$, several other maps satisfying the same conditions can be constructed.

\smallskip

Let $\Om$ be an open connected subset of $\R_{(\ppa, \spa)}^2$ containing the 
square $[-1,1]^2$  and let us consider the map $\mappa=(\mappauno, \mappadue): \Om \to \R^2$
defined by 
\begin{displaymath}
  \mappauno(\ppa,\spa):=
    \begin{cases}
      \sqrt{1-\ppa^2+\spa^2}, &\mbox{ if }|\ppa |<1,\,s>0\\
      - \sqrt{1-\ppa^2+\spa^2}, &\mbox{ if }|\ppa |<1,\,s<0\\
      \spa, &\mbox{ otherwise }
    \end{cases}
\end{displaymath}
and $\mappadue\in \C^1(\Om)\cap W^{1,\infty}(\Om)$ such that condition (A) holds 
(the simpler example is of course $\mappadue\equiv 0$).
We notice that $\mappauno \in {\rm Lip}_{\rm loc}\left(\Om\setminus\osaltou\right)\cap \C^1\left(\Om\setminus \osaltou\right)$ 
(with $\osaltou=[-1,1]\times\{0\}$);
it is in $W^{1,1}(\Om\setminus \osaltou)$ but it fails to be in $W^{1,2}(\Om\setminus\osaltou)$, 
as it can be checked directly (see Remark \ref{rem:W12}).
Consequently, since the gradient of $\mappadue$ is supposed to be bounded,
we get also that $\M(\grad \mappa)$ is in $L^1(\Omega\setminus\osaltou;\R^6)$.
In this particular case $\Gamma$ is a close simple analytic curve lying 
on the cylinder with base the unit disk, 
thus the existence of a semicartesian parametrization is obvious since 
the area-minimizing surface spanning $\Gamma$ can be described 
as a graph on the disk (Theorem \ref{teo:convex_proj}). 
We stress that $\Gamma$ satisfies (A) and thus we could apply 
the argument in the proof of Theorem \ref{prop:analytic}. It is easy to modify this example
keeping the same behaviour near the poles but losing the convexity of the projection of $\Gamma$.
\end{Example}

\begin{Example}\label{exa:manu}\rm
Two other interesting examples of curves $\Gamma$ satisfying condition (A)
have already been discussed in the introduction and plotted in 
Figure \ref{fig:intro}.
\end{Example}

\section{Proof of Theorem \ref{prop:analytic}}\label{sec:par}
In this section 
we prove Theorem \ref{prop:analytic}. The proof is involved,
and we split it into various points.  

Let $\immmapmin$ be an area-minimizing surface spanning
$\Gamma$ and having 
the topology of the disk. Let
\begin{equation}\label{eq:mappaY}
	\begin{split}
		\paramap: (
\parahilduno
,\parahilddue)\in \overline B\subset \R^2_{(\parahilduno,\parahilddue)} &\to 
(\paramap_1(\parahilduno,\parahilddue),\paramap_2(\parahilduno,\parahilddue),\paramap_3(\parahilduno,\parahilddue)) \in 
\R^3 = \R_\primoparametroastratto
\times \target
	\end{split}
\end{equation}
be a conformal parametrization of $\immmapmin$ (see Theorem 
\ref{teo:existence}).

Since we can assume the three points condition (Remark 
\ref{rk:3points}), we suppose that
\begin{equation}\label{eq:duepunti}
\begin{split}
\paramap(0,-1)&
=(a,\gp(a))
=(a,\gm(a))=:\ps
\\ 
\paramap(0,1)&=
(b,\gp(b))=(b,\gm(b))=:\pn,
\end{split}
\end{equation} 
and we fix a third condition as we wish (respecting
the monotonicity on the boundary parametrization), for definitiveness
$$
\paramap(1,0)=((a+b)/2,\gp((a+b)/2)).
$$

\medskip

In Section \ref{sec:trans} we show a transversality property. 
We will make use of Morse relations for manifolds with boundary,
in order to exclude, for a suitable
Morse function,  the presence of critical
points of index one. The absence of 
boundary branch points for $Y$ will be used 
in the proof. 

In Section \ref{sec:global} we explain how this transversality property ensures the
existence of a local semicartesian parametrization and, using some compactness argument
and the simply connectedness of $\immmapmin$, also of a global semicartesian
parametrization.

Finally in Section \ref{sec:domshape} we provide the regularity and the
shape of the domain of this semicartesian parametrization.

\subsection{A transversality result}\label{sec:trans}

Let $\mathcal{P}$ be the family of parallel planes orthogonal to the unit vector $e_\ppa=(1,0,0)$, 
that is the planes in the form
$$
 \Big\{(\primoparametroastratto,\xi,\eta) \in \R_t \times\target: \ppa 
= {\rm const}\Big\}. 
$$
Notice that
each point of $\immmapmin$ is met by some $\Pi \in \mathcal P$.

The next result is one of the most delicate parts of the proof of Theorem \ref{prop:analytic}.

\begin{Theorem}[\textbf{Transversality}]\label{lem:piani}
In the same hypotheses of Theorem \ref{prop:analytic}, none of the planes of $\mathcal P$ 
is tangent to $\immmapmin$.
\end{Theorem}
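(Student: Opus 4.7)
The plan is to translate the tangency condition into a critical point condition for the first coordinate of an analytic extension of the conformal parametrization $Y$, and then exclude critical points by combining the harmonic maximum principle with Morse relations for manifolds with boundary.

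First I would use the analyticity of $\Gamma$ (condition (A)) together with the classical reflection/regularity theorem for conformal minimal surfaces whose boundary lies on an analytic curve to extend $Y$ to an analytic map $\extmapmin=(\extmapmin_1,\extmapmin_2,\extmapmin_3)$ on an open set $\discoest\supset\overline{\disco}$, whose image $\extimmmapmin$ is a minimal surface strictly containing $\immmapmin$. The same analyticity, together with $|g'(\theta)|\neq 0$ from \eqref{eq:nondege}, rules out boundary branch points on $\partial \disco$ (interior branch points are absent by standard regularity of the Plateau solution). Next I would define the height function $h(\parahilduno,\parahilddue):=\extmapmin_1(\parahilduno,\parahilddue)$ on $\discoest$, which is harmonic since it is a coordinate of a conformal minimal immersion.

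The key equivalence I would record is that, at each $(\parahilduno_0,\parahilddue_0)\in\overline{\disco}$, the tangent plane to $\extimmmapmin$ at $\extmapmin(\parahilduno_0,\parahilddue_0)$ belongs to $\mathcal P$ if and only if $\nabla h(\parahilduno_0,\parahilddue_0)=0$. This uses that $\extmapmin$ is an immersion in a neighborhood of $\overline{\disco}$, so the tangent plane is spanned by $\partial_\parahilduno \extmapmin$ and $\partial_\parahilddue \extmapmin$; orthogonality of both to $e_\primoparametroastratto$ is exactly $\partial_\parahilduno h=\partial_\parahilddue h=0$. Thus the theorem reduces to showing $h$ has no critical point in $\overline{\disco}$ (the two boundary points $\parabordopalla(\theta_{\rm s}),\,\parabordopalla(\theta_{\rm n})$ corresponding to $S,N$ will have to be examined separately, but there the conditions $\parabordo_1''(\theta_{\rm s})>0$, $\parabordo_1''(\theta_{\rm n})<0$ together with the conformality relations force $\partial_\parahilddue h\neq 0$, so the tangent plane is not in $\mathcal P$).

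Interior extrema of $h$ are excluded at once by the strong maximum principle: $h|_{\partial \disco}=\parabordo_1$ is non-constant by \eqref{eq:nondege}, so $h$ attains its extrema only on $\partial \disco$. Interior saddles are the crux. I would verify that $h$ is a Morse function on $\overline{\disco}$ in the sense of Appendix \ref{sec:appb}: the nondegeneracy conditions \eqref{eq:nondege} guarantee that $h|_{\partial \disco}$ has exactly two critical points, a nondegenerate minimum at $\parabordopalla(\theta_{\rm s})$ and a nondegenerate maximum at $\parabordopalla(\theta_{\rm n})$ (from $\parabordo_1''>0$ and $\parabordo_1''<0$ respectively, together with $\parabordo_1'<0$ on $(\theta_{\rm n},\theta_{\rm s})$ and $\parabordo_1'>0$ on $(\theta_{\rm s},\theta_{\rm n})$); any interior critical point is nondegenerate because $h$ is a nonconstant real-analytic harmonic function, hence locally of the form $\Re(c z^n)$, and being harmonic it must have index $1$ (saddle). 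I would then apply the Morse relation of Theorem \ref{teo:mors} on $\overline{\disco}$, whose Euler characteristic is $1$: with one boundary minimum (index $0$ contribution), one boundary maximum (index $2$ contribution) and no other boundary critical points, the relation forces the number of interior saddles to be zero. Combining this with the exclusion of interior extrema gives that $h$ has no critical point in ${\rm int}(\disco)$, completing the proof.

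The main obstacle is the Morse theoretic step: one must verify that the hypotheses of Theorem \ref{teo:mors} are all in force, i.e.\ that $h$ is honestly Morse on the closed disk, that the two boundary critical points contribute correctly (the inward/outward character of $\nabla h$ at $\parabordopalla(\theta_{\rm s}),\parabordopalla(\theta_{\rm n})$ must be identified), and that the analyticity of $\parabordo$ combined with the absence of branch points is exactly what makes all critical points nondegenerate in the required sense. This is precisely the point where the analyticity hypothesis of condition (A) is indispensable; a merely smooth $\Gamma$ would leave open the possibility of degenerate saddles or of boundary branch points that would spoil the Morse count.
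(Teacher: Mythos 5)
You take essentially the same strategy as the paper --- analytic extension $\extmapmin$ of $Y$ across $\Gamma$, the height function $h=\extmapmin_1$, the equivalence between tangency of a plane in $\mathcal P$ and criticality of $h$, exclusion of interior extrema, and a Morse relation for the closed disk --- but there is a genuine gap in the interior Morse step. You assert that any interior critical point of $h$ is automatically nondegenerate because $h$ is a nonconstant real-analytic harmonic function, ``locally of the form $\Re(cz^n)$, and being harmonic it must have index $1$.'' This is false as stated: for $n\geq 3$ the harmonic function $\Re(cz^n)$ has a critical point at the origin whose full Hessian vanishes, so it is degenerate, has no Morse index, and falls outside the hypotheses of Theorem \ref{teo:mors}. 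Harmonicity (via the maximum principle) excludes local extrema, i.e.\ it forces $n\geq 2$, but it does not force $n=2$, and analyticity does not help. Nondegeneracy is precisely the nontrivial point the paper spends two of its six steps on.

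The paper handles it as follows. Step 1 observes that a degenerate critical point of $h$ is a critical \emph{value} preimage of the Gauss map of $\extimmmapmin$, and invokes Sard's lemma: an arbitrarily small rotation of $\extimmmapmin$ about a line orthogonal to $(1,0,0)$ makes the vertical direction a regular value of the Gauss map, hence makes every interior critical point of the (rotated) $h$ nondegenerate, while condition (A) (being an open condition) survives. Steps 2--5 then run the Morse count for the rotated surface. Step 6 removes the rotation by contradiction: a degenerate interior critical point of $h$ would be an umbilical point of the minimal surface (mean curvature zero plus Gauss curvature zero forces $\kappa_1=\kappa_2=0$), umbilical points on a non-planar minimal surface are isolated, and this yields a nearby regular direction for the Gauss map that is normal to $\extimmmapmin$ near that point, contradicting what was proved for the rotated surface. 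Without some version of this Sard/umbilical argument your Morse count is not justified.

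A secondary, smaller issue: you claim that $g_1''(\theta_{\rm s})>0$ and $g_1''(\theta_{\rm n})<0$, ``together with the conformality relations, force $\partial_\parahilddue h\neq 0$'' at $S$ and $N$. This is not immediate, because the boundary parametrization induced by $Y$ is $g\circ\psi$ for an a priori unknown reparametrization $\psi$. The paper instead rules out criticality of $h$ at $S,N$ by a convex-hull/wedge argument (condition (A) places $\immmapmin$ inside a wedge whose ridge is the tangent line to $\Gamma$ at its lowest point, and the two faces have strictly increasing slope), and uses the $g_1''$ sign conditions only to show that $h_{|\partial B}$ is a Morse function, via the identity $h_{|\partial B}''=g_1''(\psi)\,(\psi')^2+g_1'(\psi)\,\psi''$ in which the second term vanishes at the poles.
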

\begin{proof}
We have to show that
the normal direction to $\immmapmin$ at a point of $\immmapmin$ 
is never parallel to $(1,0,0)$; at self-intersection points
of $\immmapmin$, the statement 
refers to all normal directions.

Our strategy is to introduce a height function 
having the planes of the family $\mathcal P$ as level sets, namely 
the function 
given by the first coordinate $\primoparametroastratto$
in $\R^3 = \R_t \times \target$, restricted to an extension of $\immmapmin$. 
The proof consists then in 
proving that the only critical points of the height function are the minimum
and the maximum corresponding to points $\ps$ and $\pn$ (see \eqref{eq:duepunti}). 

\smallskip

Since $\partial \immmapmin = \Gamma$ is non-empty, 
in order to deal with boundary critical points 
first of all it is convenient
to extend $\immmapmin$ across $\Gamma$. 

By condition (A)  the curve
 $\Gamma$ is analytic; therefore (Theorem \ref{teo:ext})
we can extend $\immmapmin$ to an analytic  minimal 
surface $\extimmmapmin$ 
across $\Gamma$; $\extimmmapmin$ 
can be parametrized on a bounded smooth simply connected  open set $\extdisk
\supset \disk$
through an analytic map $\extmapmin =(\extmapmin_1, \extmapmin_2, \extmapmin_3)$ 
which coincides with $Y$ on $\overline B$, is harmonic, i.e. $\Delta \extmapmin=0$ in $\extdisk$,
and satisfies the conformality relations
$
\vert \extmapmin_\phu\vert^2 = 
\vert \extmapmin_\phd \vert^2$, 
$\extmapmin_\phu\cdot \extmapmin_\phd  =0$ in $\extdisk$.
In addition, 
from Theorems \ref{teo:nointbrapo} and \ref{teo:noboundarybp}, $\extmapmin$ has 
no interior (i.e., in $\disk$) and no
boundary (i.e., on $\partial \disk$) branch points. Hence, possibly reducing $\extdisk$, we can 
suppose that $\extmapmin$ has no branch points in $\extdisk$.

Therefore, the Gauss map 
\begin{equation}\label{eq:gauss}
	\gauss: (\phu,\phd) \in  \extdisk \to \gauss(\phu,\phd):= 
		\frac{\extmapmin_\phu(\phu, \phd) \wedge \extmapmin_\phd(\phu, \phd)}
			{|\extmapmin_\phu(\phu, \phd)\wedge \extmapmin_\phd(\phu, \phd) |}
\end{equation}
is well-defined in $\extdisk$\footnote{$\gauss$ is also harmonic and satisfies the conformality relations, see \cite[Chapter 1.2]{DiHiSa:10}.}. 

Let us define 
\begin{displaymath}
		\height: (\phu,\phd) \in \extdisk  \to 
\height(\phu,\phd) :=  \extmapmin_1(\phu,\phd) \in 
\R_\primoparametroastratto.
\end{displaymath}  

Observe that 
$(\phu_0, \phd_0)
\in \extdisk$ is a critical point for $h$ if and only if 
the plane 
$\{(\primoparametroastratto,\xi,\eta)\in \R^3: \primoparametroastratto = \extmapmin_1(\phu_0,\phd_0)\}$ 
is tangent to $\extimmmapmin$ at $\extmapmin(\phu_0,\phd_0)$.
Indeed, criticality implies  
$\partial_\phu \extmapmin_1 (\phu_0 ,\phd_0)
=\partial_\phd \extmapmin_1 (\phu_0 ,\phd_0)=0$,
and one checks from \eqref{eq:gauss} that  
\begin{equation}\label{eq:N}
\gauss(\phu_0,\phd_0) =(1,\,0,\,0).
\end{equation}
On the other hand, if $\gauss(\phu_0,\phd_0)=(1,0,0)$
the image of any vector through the differential of $Y$ at $(\phu_0,\phd_0)$
is orthogonal to $(1,0,0)$. If in particular we consider the image of $e_\phu=(1,0)$
and $e_\phd=(0,1)$ we obtain $Y_{1\phu}(\phu_0,\phd_0)=0=Y_{1\phd}(\phu_0,\phd_0)$.

{}From the above observation, it follows that the thesis of the theorem
reduces to show that the function $h$ has no critical points in $\overline
B$, except for $(0,\pm1)$, for which we shall prove separately that $\gauss(0,\pm 1)\neq (1,0,0)$.

\medskip

At first, we shall show that the thesis of the theorem holds true up to
a small rotation of $\extimmmapmin$ around a line in the orthogonal space to $(1,0,0)$ 
that takes a direction in a suitable set to become $(1,0,0)$; moreover this set of 
directions is dense in a small neighborhood of $(1,0,0)$.

In the last step we will show that the statements holds true \textit{without applying this rotation}.

\medskip

{\tt step 1}. Up to a suitable rotation in $\R^3$, the function $\height$ has no degenerate critical points.

\medskip

We notice that
any  degenerate critical point of $\height$
is  a critical point also for the Gauss map.
Indeed let $(\phu_0, \phd_0)\in \extdisk$ be  critical:
using \eqref{eq:N} we have for the coefficients
of the second fundamental form
$$
\extmapmin_{\phu \phu} \cdot \gauss
=\extmapmin_{1 \phu \phu}
=
h_{\phu \phu}, 
\qquad
\extmapmin_{\phu \phd} \cdot \gauss
=\extmapmin_{1 \phu\phd}=h_{\phu\phd},  \qquad
\extmapmin_{\phd \phd} \cdot \gauss
=\extmapmin_{1 \phd\phd} = 
h_{\phd\phd}.
$$
If in addition
$(\phu_0, \phd_0)$ is degenerate, then
the determinant of the 
Hessian of $\height$ at $(\phu_0, \phd_0)$ vanishes,
and this implies that also the determinant of the second fundamental form is zero.
That is $(\phu_0, \phd_0)$ is a critical point for the
Gauss map.

{}From Sard's lemma, it follows that we can find a rotation around a 
line in the orthogonal space  to $(1,0,0)$,
as close as we want to the identity,  so that
the $t$-direction does not belong to the set of critical values of the Gauss map.
Moreover such a rotation can be freely chosen in a set that is dense in a neighborhood
of the identity.
We also remark that for a sufficiently small rotation condition (A) 
remains valid although the values $\theta_{\rm n}$ and $\theta_{\rm s}$ of the
parameter leading to maximal and minimal value of the
$\primoparametroastratto$-component are perturbed of a small amount.

Therefore, from now on we assume that
$$
{\rm  all~
critical~ points~ of~ } \height  {\rm ~in} ~\extdisk {\rm  ~are~ nondegenerate}.
$$
{\tt step 2}. The height function $h$ has no critical points on $\partial B$.

\medskip

Suppose by contradiction that there exists $(\parahilduno,\parahilddue)\in \partial B\setminus\{(0,\pm 1)\}$
such that $\grad h (\parahilduno,\parahilddue)=0$, namely 
$(\parahilduno,\parahilddue)$ is a critical point of $h$ different from $(0,\pm 1)$.
We claim that if $\tandisco\in \R^2$, $|\tandisco|=1$, $\tandisco$ tangent to $\partial B$ at $(\parahilduno,\parahilddue)$ ,
then for some $\lambda\neq 0$
\begin{displaymath}
Y_{\tandisco}(\parahilduno,\parahilddue)=\lambda \tau_\Gamma(\parahilduno,\parahilddue),
\end{displaymath}
where $\tau_\Gamma(\parahilduno,\parahilddue)$ is a tangent unit vector to $\Gamma$ at $Y(\parahilduno,\parahilddue)$ 
and $Y_{\tau_{\partial \disk}}$ is the derivative of $Y$ along $\tau_{\partial \disk}$.
Indeed, since $Y$ is smooth up to $\partial B$, it follows that $Y_{\tandisco}(\parahilduno,\parahilddue)$
is tangent to $\Gamma$ at $Y(\parahilduno,\parahilddue)$. 
Now write $\tandisco=\alpha e_\parahilduno+\beta e_\parahilddue$, $\alpha ^2 +\beta ^2 =1$ and $e_\parahilduno=(1,0)$, 
$e_\parahilddue=(0,1)$. 
Since
\begin{displaymath}
Y_{\tandisco}(\parahilduno, \parahilddue)=\alpha Y_\parahilduno(\parahilduno, \parahilddue)+\beta Y_\parahilddue(\parahilduno, \parahilddue),
\end{displaymath}
the conformality relations imply
\begin{displaymath}
\vert Y_{\tandisco} (\parahilduno, \parahilddue) \vert ^2=(\alpha ^2 +\beta ^2)\vert Y_\parahilduno (\parahilduno, \parahilddue) \vert ^2.
\end{displaymath}
Then the absence of boundary branch points guarantees that
$\vert Y_{\tandisco} (\parahilduno, \parahilddue) \vert ^2 \neq 0$.
Hence $Y_{\tandisco}(\parahilduno, \parahilddue)$
is a non-zero vector parallel to $\tau_\Gamma (\parahilduno, \parahilddue)$
and the claim follows.
Observe now that, by assumption, $\tau_\Gamma(\phu, \phd)$ has non-zero
$t$-component, so that
\begin{equation}\label{eq:no_crit_point}
\alpha Y^1_\phu(\phu, \phd) + \beta Y^1_\phd(\phu, \phd)\neq 0,
\end{equation}
which contradicts the criticality of $(\phu,\phd)$ for $\height$.
Thus \eqref{eq:no_crit_point} shows that $h$ has no critical points
on $\partial B \setminus \{(0,\pm 1)\}$.
In order to exclude that $\ps$ (and similarly $\pn$) is a critical point for $h$,
we observe that condition (A) implies that the convex hull
of $\Gamma$, and hence the convex hull of $\immmapmin$\footnote{
Any connected minimal surface $X$ with a parameter domain $D$ is contained 
in the convex hull of $X_{\vert\partial D}$. 
See \cite[Theorem 1, chapter 4.1]{DiHiTr:10a}. },
is contained in a wedge having the tangent to $\Gamma$ at its 
lowest point as ridge and the two slopes are strictly increasing starting
from the ridge. Thus the normal vector to $\extimmmapmin$ in $\ps$ cannot 
be parallel to $(1,0,0)$. 

\medskip

As a consequence of {\tt step 2} we can suppose that all 
critical points of $h$ are contained in $B$. 

\medskip

{\tt Step 3}. 
The function $\height$ has neither local maxima nor local 
minima in $\disk$.

\medskip

Indeed, assume by contradiction that  $p = Y(\phu_0,\phd_0)
\in \immmapmin$, where
$(\phu_0,\phd_0)
\in \disk$ is a local minimum point for $\height$.  
Then locally the surface $\immmapmin$ is contained in a half-space delimited by the 
tangent plane $\{(\primoparametroastratto, \primacoordtarget, \secondacoordtarget): 
\primoparametroastratto= Y_1(\phu_0,\phd_0)\}$,
the intersection with this tangent plane being locally only the point
$Y(p)$.
We now construct a competitor surface $\Sigma'$ as follows: 
we remove from $\immmapmin$ a small portion locally around
$p$, obtained 
by cutting $\immmapmin$ locally  
with a plane at a level slightly 
higher than the minimal value. We fill the removed portion
with a portion of plane, and this givs $\Sigma'$\footnote{If the cut level is close 
enough to the critical level,
$\Sigma'$ is the image of a map in 
$\C(\Gamma)$ (see Appendix \ref{sec:app}).}. 
Then the area of $\Sigma'$ is strictly
smaller than the area of $\immmapmin$,
a contradiction.

A similar argument holds for a local maximum point and therefore
the proof of {\tt step 3} is concluded.

\medskip

Employing the notation of Section \ref{sec:appb} ,
 we have therefore
$$
m_0(h,\disk) = m_2(h,\disk) =0.
$$

The next  step is a consequence of the monotonicity and nondegeneracy assumptions 
expressed in (\ref{eq:nondege}), and of the conformality and analiticity of $\immmapmin$.

\medskip

{\tt Step 4}. The restriction $h_{\vert \partial \disk}$ of $h$ to $\partial \disk$ is a Morse function;
moreover $m_0^-(h_{\vert \partial_h^- \disk}) =1$ and 
$m_1^-(h_{\vert \partial_h^- \disk}) =0$ (Section \ref{sec:appb}).
%
%

\medskip

We observe that condition (A) implies that there exists a parametrization of $\Gamma$ 
on $\partial \disk$ whose first components is a Morse function. We have to show that also
the parametrization induced by the area-minimizing minimal surface $Y$
has the same property. 

As already done for the function $\parabordo$, we denote by $\extmapmin_{|\partial \disk}$
and by $h_{|\partial \disk}$ the composition $\extmapmin \circ \parabordopalla$ and $h \circ \parabordopalla$ respectively
(see \eqref{eq:bordo_palla}) and we use the prime for differentiation with respect to $\theta$.
At first, we observe that out of branch points, all the directional derivatives of $\extmapmin$ are non zero. 
Thus in particular, from the absence of boundary branch points on $\partial \disk$, we deduce that 
$$
|(\extmapmin_{|\partial \disk}S)'(\theta)|\neq 0,\quad \theta\in[0,2\pi).
$$ 
On the other hand since $g$ is analytic with differentiable inverse, there exists a $\C^1$ function $\psi$ from $[0,2\pi]$
in itself such that $\psi(2\pi)=\psi(0)+2\pi$ and
$$
\extmapmin_{|\partial \disk}(\theta)=g(\psi(\theta)),\quad \theta \in [0,2\pi).
$$ 
Differentiating the last expression and remembering from \eqref{eq:nondege} that $|g'|\neq 0$, 
we get that also $\psi '$ never vanishes, indeed:
$$
0\neq |(\extmapmin_{|\partial \disk})'(\theta)|=|g'(\psi(\theta))||\psi'(\theta)|.
$$
{}From the semicartesian form of $\Gamma$, $h_{|\partial \disk}$ has just a minimum and a maximum
in corrispondence of $N=(0,1)$ and $S=(0,-1)$. From the properties of $\parabordo$ we infer
that $\psi(\theta_{\rm s})$ is the value of $\theta$ corresponding to $S$ and similarly for $N$.
Since
$$
h_{|\partial \disk}''(\theta)= \parabordo_1''(\psi(\theta))(\psi(\theta))^2 + g_1'(\psi(\theta))\psi''(\theta),
$$ 
computing for the values corresponding to $S$ and $N$ we get that the first addend is non-zero while the
second vanishes. We have thus proven that $h_{|\partial \disk}$ is a Morse function, with a maximum in $(0,1)$
and a minimum in $(0,-1)$.

\medskip

Following once more Section \ref{sec:appb} (see \eqref{eq:bordomeno}), we now set
$$
\partial^-_h \disk := 
\{(\phu,\phd) \in \partial \disk : \grad h(\phu,\phd)\cdot \nu_\disk(\phu,\phd) <0\},
$$
where $\nu_B(\phu,\phd)$ denotes the outward unit normal to $\partial \disk$
at $(\phu,\phd) \in \partial \disk$.

We prove that 
$$
(0,-1) 
\in  \partial_h^- \disk \qquad {\rm and} \qquad
(0,1) 
\notin  \partial_h^- \disk.
$$
Indeed if
$\grad h(0,-1)\cdot \nu_\disk(0,-1) \geq0$, we get a contradiction
from the same argument used in {\tt step 2} to prove that  $(0,-1)$
is not critical for $h$. Similarly $(0,1) 
\notin  \partial_h^- \disk$.

\medskip

We have thus obtained that 
$$
m_0^-(h_{\vert \partial_h^- \disk}) =1, \qquad 
m_1^-(h_{\vert \partial_h^- \disk}) =0.
$$

\medskip

{\tt Step 5}. The function $h$ has no saddle points in $\disk$.

\medskip

The Morse function $h$ ({\tt step 1}) has 
no points of index zero (minima) in $\disk$ 
and no points of index two (maxima) in $\disk$ by 
{\tt step 3}: again following the notation of 
Section \ref{sec:appb} (see \eqref{def:Mi}), we have 
$$
M_0(h, \disk \cup\partial \disk) = 1, \qquad M_2(h, \disk \cup \partial \disk) =0.
$$
In addition, using {\tt steps 2} and {\tt 4} we can apply Theorem \ref{teo:mors},
and obtain, being $\chi(\disk)=1$, 
\begin{equation*}
		M_1(h)  = M_0(h, \disk \cup\partial \disk) + M_2(h, \disk \cup\partial \disk) - \chi(\disk) =0.
\end{equation*}

\medskip

{\tt step 6.} It is not necessary to apply any rotation.

\medskip
It is sufficient to show that the direction given by
$(1,0,0)$ is actually not critical for the Gauss map.
At first we can assume that $\Gamma$ is not contained in a plane. Indeed if it 
were planar, necessarily 
$$\gauss(\phu,\phd)=\nu_0,\quad (\phu, \phd) \in \disk$$ 
for some constant unit vector $\nu_0\neq (1,0,0)$, since $\Gamma$ is union of two graphs.
 
Assuming that $\Gamma$ is non planar, we reason by contradiction and suppose that there is a degenerate critical
point $p=Y(\phu_0,\phd_0)$ for the height function $h$ in the inside of $\immmapmin$.
This means that $(\phu_0,\phd_0)$ is a critical point for the Gauss map, that is the 
product $\kappa_1 \kappa_2$ of the two principal curvatures is $0$; 
because of the minimality of $\immmapmin$
we get that $p$ is an umbilical point, with $\kappa_1=0=\kappa_2$. 
Recalling that in a non-planar minimal surface the umbilical points are isolated
(see for example \cite[Remark 2, chapter 5.2]{DiHiSa:10}), we can find a direction
in a small neighborhood of $(1,0,0)$ that is normal to $\immmapmin$ in
a neighborhood of the degenerate critical point $p$ and is not a critical
value for the Gauss map.
If we rotate $\immmapmin$ taking this direction to become vertical,
we have a nondegenerate critical point for the height function which is a
contradiction in view of the previous steps.
\end{proof}

\subsection{The semicartesian parametrization}\label{sec:global}
We can apply Theorem \ref{theo:trasv}
 to $\extimmmapmin$ with the family of planes of Theorem \ref{lem:piani},
obtaining a \textit{local} semicartesian parametrization. More precisely, for any point $p\in \extimmmapmin$
there exists an open domain $\dommap_p\subset\R^2_{(\ppa,\spa)}$  and an analytic\footnote{{}From
the proof of Theorem \ref{theo:trasv} one infers 
that the regularity of the local semicartesian map is the same as the surface.}, 
conformal semicartesian map 
$X_p$ parametrizing an open neighbourhood of $p$ on $\extimmmapmin$:
\begin{equation}\label{eq:locpar}
	\begin{split}
		X_p :\quad  D_p & \to \extimmmapmin,\\
			(t, \secondoparametroastratto_p) & \to 
			(t, X_{p2}(t,\secondoparametroastratto_p), X_{p3}(t,\secondoparametroastratto_p)).
	\end{split}
\end{equation}
\begin{Proposition}[\textbf{Global semicartesian parametrization}]\label{prop:globalpar}
In the hypotheses of Theorem \ref{prop:analytic}, $\immmapmin=X(\dommap)$ 
admits an analytic parametrization of the form \eqref{eq:good_par}.
\end{Proposition}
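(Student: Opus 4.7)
The plan is to patch together the local analytic conformal semicartesian charts $X_p$ given by \eqref{eq:locpar}, available around every point $p \in \extimmmapmin$ by virtue of the transversality result (Theorem \ref{lem:piani}) combined with Theorem \ref{theo:trasv}, into one global analytic semicartesian map $X : \dommap \to \R^3$ with $X(\dommap) = \immmapmin$. The key ingredient will be that $\immmapmin$ is of disk type, hence simply connected, so that the monodromy of the gluing is trivial.

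First, I would verify that any two such local charts differ on their overlap only by a pure $s$-translation (once orientations are fixed). If $X_p$ and $X_q$ are local conformal semicartesian parametrizations whose images intersect in a connected open region, the transition $\phi = X_q^{-1} \circ X_p = (\phi_1, \phi_2)$ satisfies $\phi_1(t,s) = t$ by the semicartesian form, so the first row of its Jacobian is $(1,0)$. Since $X_p$ and $X_q$ are both conformal immersions inducing the same metric on the overlap, $\phi$ itself is conformal, and this together with the constraint on its first row forces $\partial_t \phi_2 = 0$ and $\partial_s \phi_2 = \pm 1$, whence $\phi(t,s) = (t, \pm s + c)$ for some real constant $c$. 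A global orientation, inherited from the conformal parametrization $Y$ of $\immmapmin$, rules out the minus sign, so all transitions reduce to pure $s$-translations.

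Next, by compactness of $\overline{B}$ I would extract a finite subcover of $\immmapmin$ by images of such charts. Using the simple connectedness of $\immmapmin$, I would then apply the standard monodromy argument: fix a base chart and determine the translation constant of every other chart in the cover by propagation along a chain of overlapping charts; the difference between any two such chains corresponds to a loop on $\immmapmin$, and since that loop is null-homotopic, it bounds a composition of elementary overlaps each of which contributes a zero net shift in $s$. The resulting cohomological obstruction thus vanishes, and one obtains a well-defined analytic map $X$ on an open connected set $\dommap \subset \R^2_{(t,s)}$ having exactly the form $(t, X_2(t,s), X_3(t,s))$.

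The main obstacle I foresee is the treatment of the two poles $S = (0,-1)$ and $N = (0,1)$ of $\overline{B}$, where transversality of $\extimmmapmin$ to the planes of $\mathcal{P}$ fails and Theorem \ref{theo:trasv} does not directly supply a local semicartesian chart. Here I would exploit the nondegeneracy conditions $\parabordo_1''(\theta_{\rm s}) > 0$ and $\parabordo_1''(\theta_{\rm n}) < 0$ from \eqref{eq:nondege}, together with the analyticity of $\extmapmin$ and the absence of boundary branch points, to write an analytic local normal form for $\extmapmin$ near each pole and verify that the globally glued $X$ extends analytically up to the two corresponding endpoints $(a,0)$ and $(b,0)$ of $\overline{\dommap}$, completing the parametrization.
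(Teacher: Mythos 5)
Your proposal follows essentially the same strategy as the paper: glue the local conformal semicartesian charts supplied by Theorem \ref{theo:trasv}, show that transitions are pure $s$-translations (once an orientation is fixed), extract a finite cover, and use simple connectedness to make the monodromy trivial. Your way of seeing that the transition map is a translation — via conformality of $X_q^{-1}\circ X_p$ together with the constraint $\phi_1(t,s)=t$ — is a clean and correct alternative to the paper's geometric argument (which determines $\partial_s X_p$ directly as the unit conformal vector in $T_p\extimmmapmin\cap\{t=t_p\}$, then recovers $\partial_t X_p$ and its norm). Both give the same conclusion.

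The one substantive issue is your final paragraph: you treat the poles $S=(0,-1)$ and $N=(0,1)$ as points where transversality "fails" and announce you would need a separate normal-form analysis there. This is a misreading of Theorem \ref{lem:piani}. That theorem asserts that \emph{no} plane of $\mathcal{P}$ is tangent to $\immmapmin$, including the planes $\{t=a\}$ and $\{t=b\}$ at the poles; indeed {\tt step 2} of its proof is devoted precisely to showing $\gauss(0,\pm 1)\neq(1,0,0)$, using the wedge/convex-hull argument and the nondegeneracy conditions in \eqref{eq:nondege}. Consequently Theorem \ref{theo:trasv} applies on a neighbourhood of the poles of $\extimmmapmin$ just as elsewhere, and there are local semicartesian charts there too. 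What actually happens at $(a,0)$ and $(b,0)$ in the parameter domain is not a failure of transversality but a pinching of $\dommap:=X^{-1}(\immmapmin\cup\Gamma)$ (since $\sigma^-(a)=\sigma^+(a)$); that is an issue of the \emph{shape} of the domain, addressed separately in Section \ref{sec:domshape}, not an obstruction to constructing the global $X$. So the extra machinery you foresee is unnecessary; the rest of your argument is sound and matches the paper.
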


\begin{proof}

The local parametrization in \eqref{eq:locpar} is unique 
up to an additive constant: $s_p \mapsto \secondoparametroastratto_p + \rho$.
 Indeed,
if $\ppa_p$ is the $\ppa$-coordinate of $p$,
 the direction of $\partial_{\spa_p} X_p$ 
is given by the intersection of the tangent plane to $\extimmmapmin$
and the plane $\{t=t_p\}$,
since its $t$-component is zero.
The vector $\partial_t X_p$ is then uniquely determined
 by being in the tangent plane to
$\extimmmapmin$, orthogonal to $\partial_{\secondoparametroastratto_p} X_p$ and 
having $1$ as $t$-component. This in turn determines
the norm of $\partial_{\secondoparametroastratto_p} X_p$ and 
hence $\partial_{\secondoparametroastratto_p} X_p$ itself (up to a choice of the
orientation of $\extimmmapmin$)\footnote{Incidentally 
we note here that $|\partial_{\secondoparametroastratto_p} X_p| 
= |\partial_t X_p| \geq 1$ (which excludes branch points).}.
Functions $X_{p2}(t,\secondoparametroastratto_p)$ and 
$X_{p3}(t,\secondoparametroastratto_p)$ can now be 
obtained by integrating the vector
field $\partial_{\secondoparametroastratto_p} X_p$ 
along the curve $\{\primoparametroastratto = t_p\}\cap \extimmmapmin$ 
and transported as constant along the curves
$\{\spa = \text{const}\}$.
Now we can cover $\immmapmin \cup \Gamma$ with  a finite
 number of such neighbourhoods (local charts) having
connected pairwise intersection, and we can choose 
the constant in such a way that on the intersection of two
neighborhoods the different parametrizations coincide.
In this way we can ``transport'' the parametrization 
from a fixed chart along a chain of pairwise intersecting charts.
This definition is wellposed if we can prove that the 
transported parametrization is independent
of the actual chain, or equivalently that transporting 
the parametrization along a closed chain of
charts produces the original parametrization.
This is a consequence of the simple connectedness 
of the surface\footnote{This is one of the points where it is important
to consider disk-type area-minimizing surfaces.}, 
indeed we can take a closed curve
that traverses the original chain of charts and 
let it shrink until it is contained in a single
chart.

Thus we can construct a \textit{global} semicartesian parametrization $X$ 
defined on a open domain $\extdommap \subset \R^2$ as required in 
hypothesis (\hp 4). 
Eventually 
\begin{equation}\label{eq:domainD}
\dommap:= X^{-1}(\immmapmin\cup\Gamma)
\end{equation} 
is a closed bounded (connected and simply connected) set 
such that the intersection  with the line
 $\{t=k\}$, for $k \in (a,b)$,  is an interval (not reduced to a point); 
indeed if the intersection were composed 
by two (or more) connected components, there would be at least 
$4$ points on the intersection of $\Gamma$ with the plane 
$\{t=k\}$, and this is impossible 
since $\Gamma$ is union of two graphs on $t$.

\end{proof}

Before proving that 
the domain $D$ satisfies the local Lipschitz conditions required by
Definition \ref{def:semicart_par}, we need the following regularity result.

\begin{Lemma}\label{lemma:regularity_boundary}
The domain $\dommap$ defined in \eqref{eq:domainD} has analytic boundary.
\end{Lemma}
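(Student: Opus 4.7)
The plan is to show that $X$ is an analytic immersion from $\extdommap$ into $\R^3$, that $\partial\dommap$ equals $X^{-1}(\Gamma)$, and that near each of its points the latter is the image of an analytic arc under an analytic retraction coming from a tubular neighborhood of the surface; analyticity of $\partial\dommap$ will follow from analyticity of $\Gamma$ via this retraction.

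First, I would verify that $X$ is analytic with injective differential on all of $\extdommap$. Analyticity is built into the local semicartesian charts of Proposition \ref{prop:globalpar}, which are obtained by analytic integration on the analytic extension $\extimmmapmin$ (Theorem \ref{teo:ext}). The immersion property is intrinsic to the semicartesian form: the first row of $DX$ is $(1,0)$, while the identity $|\partial_s X| = |\partial_t X| \geq 1$ noted in the proof of Proposition \ref{prop:globalpar} forces $\partial_s X \neq 0$, so $DX$ has rank two everywhere. From this and the definition $\dommap = X^{-1}(\immmapmin\cup\Gamma)$, one obtains $\partial\dommap = X^{-1}(\Gamma)$: if $X(t_0,s_0)\in \immmapmin\setminus\Gamma$ then local openness of $X$ places $(t_0,s_0)$ in $\mathrm{int}(\dommap)$, while if $X(t_0,s_0)\in\Gamma$ then, since $\Gamma$ locally separates $\immmapmin$ from $\extimmmapmin\setminus\immmapmin$, every neighborhood of $(t_0,s_0)$ meets $\dommap^c$.

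Fix then $(t_0,s_0)\in\partial\dommap$ and set $p:=X(t_0,s_0)\in\Gamma$. Choose a neighborhood $U$ of $(t_0,s_0)$ on which $X|_U$ is an analytic embedding, and an analytic unit normal $\gauss(t,s)$ to $X(U)$. The tubular map
\begin{equation*}
F(t,s,\lambda):= X(t,s) + \lambda\,\gauss(t,s)
\end{equation*}
is an analytic diffeomorphism of a neighborhood of $(t_0,s_0,0)$ onto an open set $V\subset\R^3$, and composing $F^{-1}$ with the projection onto the $(t,s)$-factor yields an analytic retraction $\Psi:V\to U$ satisfying $\Psi\circ X=\mathrm{id}$ on $U$. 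By condition~(A), $\Gamma\cap V$ is (after shrinking $V$) an analytic regular arc, parametrized by a restriction of $\parabordo\circ\parabordopalla$ with $|\parabordo'|\neq 0$. Consequently
\begin{equation*}
\partial\dommap\cap U \;=\; \Psi(\Gamma\cap V),
\end{equation*}
which is the image of an analytic regular arc under $\Psi$ and is itself an analytic regular arc, since $\Psi$ restricted to $\extimmmapmin$ is a local diffeomorphism.

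The main obstacle is that the argument must survive at the preimages of the poles $\ps$ and $\pn$, where condition~(A) only gives $\parabordo_1'(\theta_{\rm s}) = \parabordo_1'(\theta_{\rm n}) = 0$ (with $|\parabordo'|\neq 0$), so that the naive picture of $\partial\dommap$ as the graph of a function of $t$ breaks down. However, the argument above uses only that $X$ is an immersion — which holds uniformly thanks to the absence of interior and boundary branch points of $\extmapmin$ (Theorems \ref{teo:nointbrapo} and \ref{teo:noboundarybp}) — and that $\Gamma$ is an analytic regular curve. Hence $\partial\dommap$ is analytic at the pole-preimages too; geometrically, these are simply the points at which its tangent is vertical in $(t,s)$-coordinates, which is the origin of the quadratic form $\tau(s)=a+\alpha_2 s^2+o(s^2)$ asserted in part~(i) of Theorem \ref{prop:analytic} and proved separately.
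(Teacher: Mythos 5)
Your proof is correct, and it takes a somewhat different route from the paper's. The paper's argument is shorter and more direct: it writes $\partial\dommap$ locally as the image of $\partial\disk$ under the composition $X^{-1}\circ \paramap|_{\partial\disk}$, where $\paramap$ is analytic up to $\partial\disk$ by the Gulliver--Lesley/Lewy boundary regularity (Theorem \ref{teo:noboundarybp}) and $X^{-1}$ is locally analytic because $X$ is an analytic immersion; self-intersections of $\immmapmin$ are handled by remarking that preimages of a self-intersection point in $\disk$ or $\dommap$ are well separated, so one may reason on small patches. You instead bypass the boundary parametrization $\paramap|_{\partial\disk}$ entirely, building an analytic tubular-neighborhood retraction $\Psi$ of a small embedded patch $X(U)\subset\extimmmapmin$ and transporting the analytic arc $\Gamma\cap V$ back to $\partial\dommap\cap U$; the ingredients are the same ($X$ an analytic immersion via absence of branch points and the semicartesian normalization $|\partial_s X|\geq 1$, plus analyticity of $\Gamma$ from condition (A)), but your argument is more explicitly geometric and makes visible why the pole preimages cause no trouble. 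One small point to tighten: the identity $\partial\dommap\cap U = \Psi(\Gamma\cap V)$ requires $V$ to be shrunk not only so that $\Gamma\cap V$ is a single regular arc, but also so that $\Gamma\cap V\subset X(U)$ --- otherwise a distant piece of $\Gamma$ riding a different sheet of $\extimmmapmin$ through $V$ would contribute extraneous points under $\Psi$. Since $\Gamma\setminus X(U)$ is compact and avoids $p$, this is a routine shrinkage, and it is exactly the ``well separated preimages'' remark the paper makes; still, it should be spelled out as the purpose of the shrinking.
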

\begin{proof}
The boundary of $\dommap$
is the image of an analytic map defined on
$\partial \disk$.
This latter fact follows directly from 
the analiticity of the map $Y : \disk \to \immmapmin$ (see \eqref{eq:mappaY}) 
and
of the map $X : \dommap \to \immmapmin$.
The fact that $\immmapmin$ can have self-intersections 
is not a problem here because the preimages of
points (in either $\disk$ or $\dommap$) 
in a self-intersection 
are well separated, so that we can restrict to
small patches of the surface and reason locally.
\end{proof}

We are now in a position to 
specify a further property of $\partial \dommap$\footnote{
The analiticity of $\partial \dommap$ 
in particular implies that we cannot have a global
Lipschitz constant for $\sigma^\pm$, so that the result in Proposition \ref{prop:domain_glob_par} is optimal.}.

\begin{Proposition}\label{prop:domain_glob_par}

In the hypotheses and with the notation of Proposition \ref{prop:globalpar},
$\dommap$ has the form in \eqref{eq:formadom}, where
the two functions $\sigma^\pm : [a,b] \to \R$ satisfy \eqref{eq:sigmapm} and
condition (ii) of Theorem \ref{prop:analytic}.
\end{Proposition}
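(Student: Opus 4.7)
The strategy is to read off the shape of $\dommap$ from three facts already established: (P1) every vertical slice $\dommap\cap\{t=k\}$ with $k\in(a,b)$ is a nondegenerate interval (from the proof of Proposition \ref{prop:globalpar}); (P2) $\partial\dommap$ is an analytic Jordan curve (Lemma \ref{lemma:regularity_boundary}); (P3) $X\in\C^\omega(\overline{\dommap};\R^3)$ is conformal and free of branch points, so that $|\partial_t X|^2=|\partial_s X|^2=:E\ge 1$, $\partial_t X\cdot\partial_s X=0$ and $X(\partial\dommap)=\Gamma=\Gamma^-\cup\Gamma^+$. Combining (P1) and (P2), $\partial\dommap$ decomposes into two analytic arcs joining the extremal points of $\dommap$ in the $t$-direction, and these arcs are graphs of two continuous functions $\sigma^-\le\sigma^+$ on $[a,b]$. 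The restriction of $X$ to each arc is a continuous map into $\Gamma$ which, by connectedness, lands in exactly one of $\Gamma^\pm$; after possibly relabelling, the upper arc parametrizes $\Gamma^+$ and the lower arc parametrizes $\Gamma^-$.

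I would next verify the initial conditions in \eqref{eq:sigmapm}. Theorem \ref{lem:piani} excludes tangent planes orthogonal to $e_t$, and the three-point condition \eqref{eq:duepunti} forces $(\immmapmin\cup\Gamma)\cap\{t=a\}=\{S\}$; hence $X$ is identically equal to $S$ on the slice $\dommap\cap\{t=a\}$. If this slice were a nondegenerate interval, $\partial_s X$ would vanish on it, contradicting $|\partial_s X|\ge 1$ from (P3). Thus the slice reduces to one point, which after an inessential translation in $s$ may be taken to be $(a,0)$, so that $\sigma^-(a)=\sigma^+(a)=0$. The identical argument at $t=b$ gives $\sigma^-(b)=\sigma^+(b)$, while $\sigma^-<\sigma^+$ on $(a,b)$ is just (P1).

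The last and most delicate step is the Lipschitz estimate (ii), which I expect to be the main obstacle. By condition (A) the tangent to $\Gamma$ has vanishing $t$-component only at the two poles $S,N$; since $dX$ sends tangent vectors of $\partial\dommap$ to tangent vectors of $\Gamma$, the tangent to $\partial\dommap$ can be vertical only at the vertices $(a,0)$ and $(b,\sigma^\pm(b))$. Consequently $\sigma^\pm$ are analytic on $(a,b)$. Fix a compact $I\Subset(a,b)$ and differentiate the identity $X(t,\sigma^+(t))=(t,\gamma^+(t))$ in $t$ to obtain, at $(t,\sigma^+(t))$,
\[
\partial_t X+\dot\sigma^+(t)\,\partial_s X=(1,\dot\gamma^+_1(t),\dot\gamma^+_2(t)).
\]
Taking squared norms and using the conformality relations $|\partial_t X|^2=|\partial_s X|^2=E$, $\partial_t X\cdot\partial_s X=0$ produces
\[
E\bigl(1+\dot\sigma^+(t)^2\bigr)=1+|\dot\gamma^+(t)|^2,
\]
and since $E\ge 1$ we deduce $|\dot\sigma^+(t)|\le|\dot\gamma^+(t)|$ on $I$. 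The analogous identity for the lower arc yields the companion estimate for $\sigma^-$, establishing (ii).

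The principal technical subtlety, beyond the algebra of the last computation, is the a priori exclusion of interior points of $\partial\dommap$ with vertical tangent: without such an exclusion $\sigma^\pm$ could develop cusps in $(a,b)$ and fail to be locally Lipschitz, no matter how regular $\gamma^\pm$ were. This is precisely where the nondegeneracy part of condition (A) and the absence of branch points asserted in Proposition \ref{prop:globalpar} combine, via the $dX$-tangency argument above, to rule out such pathological behaviour.
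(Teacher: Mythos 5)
Your proof is correct and reaches the same conclusions, but the key step (ii) is handled by a genuinely different argument than the one in the paper. The paper's proof compares two angles: $\vartheta$, the angle between the tangent $\Gamma'$ and the vector $X_t$, and $\Theta$, the angle between $\Gamma'$ and $e_t$; it shows $\mathrm{tg}\,\vartheta = \sigma'$ and $\mathrm{tg}\,\Theta = |\gamma'|$, and then establishes $\vartheta\le\Theta$ by a geometric comparison of latitudes and geodesic distances on $\mathbb S^2$, viewing $\Gamma'/|\Gamma'|$ as a point on the sphere. You instead differentiate the boundary identity $X(t,\sigma^+(t))=(t,\gamma^+(t))$ along $t$, apply the conformality relations $|\partial_t X|^2=|\partial_s X|^2=E$, $\partial_t X\cdot\partial_s X=0$, and obtain the explicit identity $E\bigl(1+(\dot\sigma^+)^2\bigr)=1+|\dot\gamma^+|^2$, from which $E\ge 1$ gives the Lipschitz bound immediately. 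Your computation is more elementary and, arguably, more informative, since it yields an exact relation rather than only an inequality; it also makes clear that the absence of branch points ($E\ge 1$) is the single analytic fact that drives the estimate. The preliminary parts of your argument (using the absence of branch points to show the slice at $t=a$ degenerates to a point, and using $dX$-tangency to exclude vertical tangents of $\partial\dommap$ in $(a,b)$, so that $\sigma^\pm$ are analytic there) are also correct and are either implicit in, or spelled out more fully than, the paper's treatment.
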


\begin{proof}
Since, as noticed in 
Lemma 
\ref{lemma:regularity_boundary}, $D\cap\{t=k\}$ is a interval, not reduced to a point, 
for any $k\in (a,b)$, $\dommap$ is in the form \eqref{eq:formadom} with $\sigma^-<\sigma^+$ in $(a,b)$;
up to traslation we can suppose also $\sigma^+(a)=0=\sigma^-(a)$. 

Let $(t,s) \in \partial D$ and let
$p = X(t,s) \in \Gamma$. Let us suppose that $s=\sigma^-(t)$ (the case $s=\sigma^+(t)$ being similar) and 
let us write $\sigma$ in place of $\sigma^-$ for simplicity. 
We have to show that
\begin{equation}\label{eq:sigmaprimo}
|\sigma'(t)|\leq|\gamma'(t)|.
\end{equation}
Let
$\vartheta(t,s) 
\in [-\pi/2,\pi/2]$ be the angle between 
the tangent line 
to $\Gamma$ at $p$ (spanned by $\frac{\Gamma'(t)}{\vert
\Gamma'(t)\vert}$) and the direction of 
$X_t(t,s)$.
Note that if  $\vartheta(t,s) 
\in (-\pi/2,\pi/2)$ we have
\begin{equation}\label{eq:vartheta}
{\rm tg}(\vartheta(t,s))= \sigma'(t).
\end{equation}
Indeed, take a vector 
$\ell$ generating the tangent line to $\partial D$ at $(t,s)$,
for instance
$\ell = (\sigma'(t),1)$.
Using also the conformality of $X$, the derivative $X_\ell$ 
of $X$ along the direction of $\ell$ is given by 
$X_\ell(t,s) = \sigma'(t) X_s(t,s) + X_t(t,s)$, and 
is a vector generating the tangent line to $\Gamma$ at $p$,
 and \eqref{eq:vartheta} follows.

Let now 
$\Theta(t,s)\in [0,\pi/2]$ be the angle between 
the tangent line to $\Gamma$ at $p$ and the line generated by $e_t=(1,0,0)$.
If $\Theta(t,s)\in [0,\pi/2)$ 
we have, writing $\gamma$ in place of $\gamma^-$,
$$
{\rm tg}(\Theta(t,s))= \vert \gamma'(t)\vert.
$$
Hence, to show \eqref{eq:sigmaprimo}, 
it is sufficient to show that 
$\vartheta(t,s) \leq \Theta(t,s)$, or
equivalently
\begin{equation}\label{eq:thetaTheta}
\frac{\pi}{2} - \vartheta(t,s) 
\geq \frac{\pi}{2} - \Theta(t,s).
\end{equation}
Consider 
$\frac{\Gamma'(t)}{\vert \Gamma'(t)\vert}$ 
as a point on 
$\mathbb S^2 \subset \R^3$ and think of $e_t$ as
the vertical direction (Figure \ref{fig:vett_b}). 
We have that 
$\frac{\pi}{2} - \Theta(t,s)$ is the latitude of 
$\frac{\Gamma'(t)}{\vert \Gamma'(t)\vert}$. On the other
hand, remembering that $X_s(t,s)$ 
is orthogonal to $e_t$,
 we have that $\frac{\pi}{2} - \vartheta(t,s)$ 
(the angle between $\frac{\Gamma'(t)}{\vert \Gamma'(t)\vert}$
and $X_s(t,s)$ by conformality)
is the geodesic distance (on $\mathbb S^2$)
between 
$\frac{\Gamma'(t)}{\vert \Gamma'(t)\vert}$ and the point obtained as 
the intersection between $T_p(\Sigma_{\rm min})$ and the equatorial plane.
Hence inequality \eqref{eq:thetaTheta} holds true.
\end{proof}

\begin{figure}[htbp]
\centering%
\subfigure [\label{fig:vett_a}]%
{
\def\svgwidth{6.5cm}
\begingroup%
  \makeatletter%
  \providecommand\color[2][]{%
    \errmessage{(Inkscape) Color is used for the text in Inkscape, but the package 'color.sty' is not loaded}%
    \renewcommand\color[2][]{}%
  }%
  \providecommand\transparent[1]{%
    \errmessage{(Inkscape) Transparency is used (non-zero) for the text in Inkscape, but the package 'transparent.sty' is not loaded}%
    \renewcommand\transparent[1]{}%
  }%
  \providecommand\rotatebox[2]{#2}%
  \ifx\svgwidth\undefined%
    \setlength{\unitlength}{330.85bp}%
    \ifx\svgscale\undefined%
      \relax%
    \else%
      \setlength{\unitlength}{\unitlength * \real{\svgscale}}%
    \fi%
  \else%
    \setlength{\unitlength}{\svgwidth}%
  \fi%
  \global\let\svgwidth\undefined%
  \global\let\svgscale\undefined%
  \makeatother%
  \begin{picture}(1,1.0182325)%
    \put(0,0){\includegraphics[width=\unitlength]{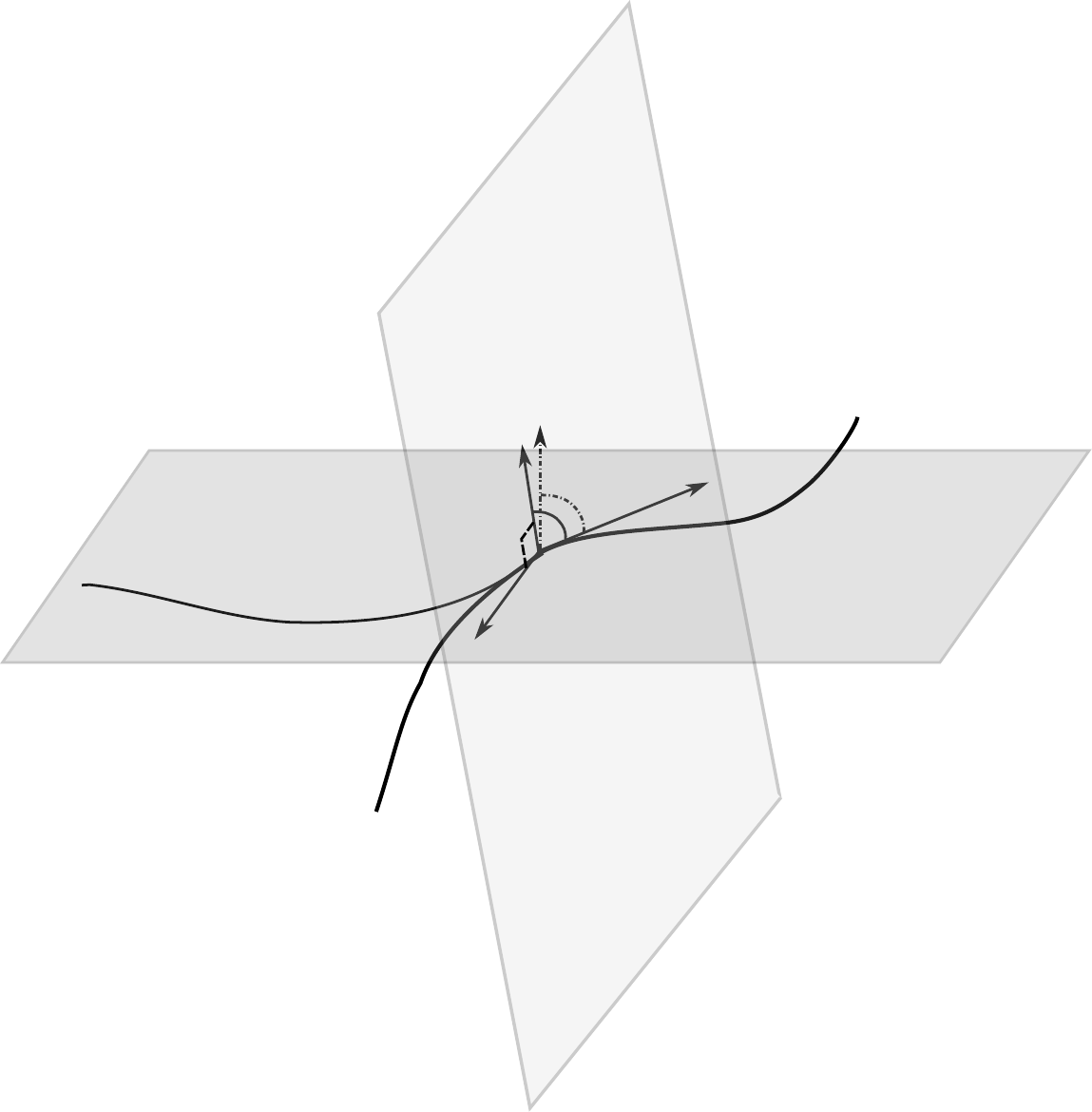}}%
    \put(0.78611288,0.61445758){\color[rgb]{0,0,0}\makebox(0,0)[lb]{\smash{\tiny{$\Gamma$}}}}%
    \put(0.6144791,0.58079431){\color[rgb]{0,0,0}\makebox(0,0)[lb]{\smash{\tiny{$\Gamma'$}}}}%
    \put(0.4984703,0.62162701){\color[rgb]{0,0,0}\makebox(0,0)[lb]{\smash{\tiny{$e_t$}}}}%
    \put(0.41184481,0.56043802){\color[rgb]{0,0,0}\makebox(0,0)[lb]{\smash{\tiny{$\partial_t X$}}}}%
    \put(0.45825514,0.44363303){\color[rgb]{0,0,0}\makebox(0,0)[lb]{\smash{\tiny{$\partial_s X$}}}}%
    \put(0.11472172,0.4908016){\color[rgb]{0,0,0}\makebox(0,0)[lb]{\smash{\tiny{$X(\overline{t},\cdot)$}}}}%
    \put(0.52771806,0.55622376){\color[rgb]{0,0,0}\makebox(0,0)[lb]{\smash{\tiny{$\Theta$}}}}%
    \put(0.50173322,0.4920878){\color[rgb]{0,0,0}\makebox(0,0)[lb]{\smash{\tiny{$\theta$}}}}%
    \put(0.46469947,0.79933907){\color[rgb]{0,0,0}\makebox(0,0)[lb]{\smash{$T_\Sigma(p)$}}}%
    \put(0.71057949,0.4892282){\color[rgb]{0,0,0}\makebox(0,0)[lb]{\smash{$\{t=\overline{t}\}$}}}%
  \end{picture}%
\endgroup%

}\qquad
\subfigure[ \label{fig:vett_b}]%
{
\def\svgwidth{6cm}
\begingroup%
  \makeatletter%
  \providecommand\color[2][]{%
    \errmessage{(Inkscape) Color is used for the text in Inkscape, but the package 'color.sty' is not loaded}%
    \renewcommand\color[2][]{}%
  }%
  \providecommand\transparent[1]{%
    \errmessage{(Inkscape) Transparency is used (non-zero) for the text in Inkscape, but the package 'transparent.sty' is not loaded}%
    \renewcommand\transparent[1]{}%
  }%
  \providecommand\rotatebox[2]{#2}%
  \ifx\svgwidth\undefined%
    \setlength{\unitlength}{190.07842686bp}%
    \ifx\svgscale\undefined%
      \relax%
    \else%
      \setlength{\unitlength}{\unitlength * \real{\svgscale}}%
    \fi%
  \else%
    \setlength{\unitlength}{\svgwidth}%
  \fi%
  \global\let\svgwidth\undefined%
  \global\let\svgscale\undefined%
  \makeatother%
  \begin{picture}(1,1.01029955)%
    \put(0,0){\includegraphics[width=\unitlength]{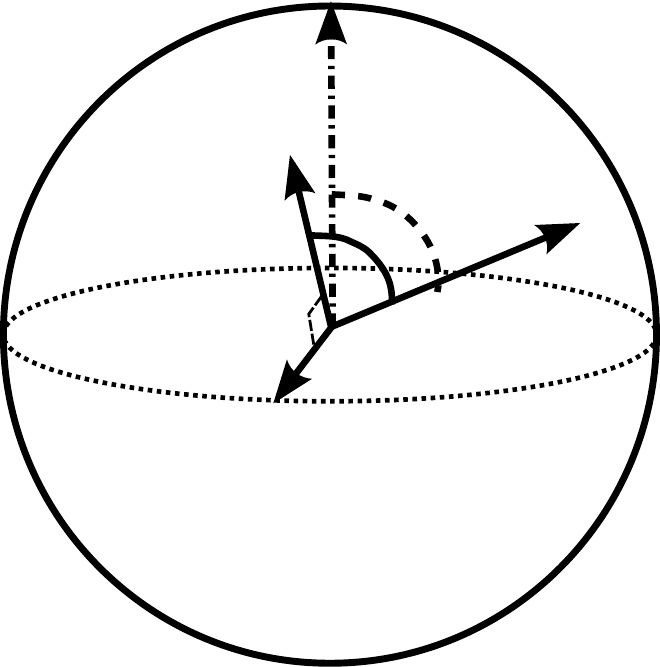}}%
    \put(0.62281313,0.69247301){\color[rgb]{0,0,0}\makebox(0,0)[lb]{\smash{$\Theta$}}}%
    \put(0.56208664,0.63362053){\color[rgb]{0,0,0}\makebox(0,0)[lb]{\smash{$\theta$}}}%
    \put(0.78088951,0.72020023){\color[rgb]{0,0,0}\makebox(0,0)[lb]{\smash{$\frac{\Gamma'}{|\Gamma'|}$}}}%
    \put(0.30105248,0.72419521){\color[rgb]{0,0,0}\makebox(0,0)[lb]{\smash{$\frac{\partial_t X}{|\partial_t X|}$}}}%
    \put(0.29313984,0.47277564){\color[rgb]{0,0,0}\makebox(0,0)[lb]{\smash{$\frac{\partial_s X}{|\partial_s X|}$}}}%
    \put(0.50767534,0.89678688){\color[rgb]{0,0,0}\makebox(0,0)[lb]{\smash{$e_t$}}}%
    \put(0.80724265,0.92126829){\color[rgb]{0,0,0}\makebox(0,0)[lb]{\smash{$\mathbb{S}^2$}}}%
  \end{picture}%
\endgroup%

}
\caption{\small{
(a): The dotted vector $e_t$ is  perpendicular to the plane 
$\{t=\overline{t}\}$ on which we have represented a part of the curve
$\{X(\overline{t},s): s\in[\sigma^-(t),\sigma^+(t)]\}$. 
 $\Gamma$ is also drawn, 
and passes through the plane $\{t=\overline{t}\}$ transversally. 
The other plane is the tangent plane to $\immmapmin$
at $p=X(\overline{t}, \sigma^-(\overline{t}))$ and the three vectors
are
the conformal basis of the tangent plane ${\rm span}\{\partial_t X, \partial_sX\}$ and the vector $\Gamma'(\overline{t})$. 
The angles $\theta$ and $\Theta$ are also displayed. (b): the same 
vectors normalized and represented on the sphere $\mathbb{S}^2$.}}    
\end{figure}

\subsection{Shape of the parameter domain}\label{sec:domshape}

In order to conclude the proof of Theorem \ref{prop:analytic}, we need to 
study the behaviour of $\partial \dommap$ near $(a,0)$ and $(b,0)$.

\begin{Proposition}
Assertion (i) of Theorem \ref{prop:analytic} holds.
\end{Proposition}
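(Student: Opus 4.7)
The plan is to analyze the local shape of $\partial D$ near $(a,0)$ by transporting the problem to the conformal parameter space $B$ via the map $\Psi\colon B\to D$ defined locally by $\Psi(u,v)=(h(u,v),s(u,v))$, where $h=\extmapmin_1$ and $s$ is the $s$-component of $X^{-1}\circ\extmapmin$ (well defined on a neighborhood of $(0,-1)$ by Proposition \ref{prop:globalpar}). Since $\extmapmin$ is conformal and $X$ is both conformal and semicartesian, $\Psi$ is a conformal map of planar domains, so $s$ is the harmonic conjugate of $h$, uniquely determined up to orientation by $s(0,-1)=0$. The problem therefore reduces to Taylor-expanding the holomorphic function $f(z)=h+is$ at $z_0=-i$ and tracking the image $\Psi(\partial B)$ near $(0,-1)$.

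First I would verify that $\Psi$ is a local biholomorphism at $(0,-1)$. The derivative $h_u(0,-1)$ vanishes by condition \textup{(A)}: writing $\extmapmin|_{\partial B}=\parabordo\circ\psi$ as in \texttt{step 4} of the proof of Theorem \ref{lem:piani}, with $\psi'(\theta_{\rm s})\neq 0$ (absence of boundary branch points), one has $h_u(0,-1)=\parabordo_1'(\theta_{\rm s})\psi'(\theta_{\rm s})=0$. The constant $c:=h_v(0,-1)$ is strictly positive because $(0,-1)\in\partial_h^-B$ as established in the same step: $\nabla h(0,-1)\cdot\nu_B(0,-1)=-c<0$. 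Hence $f'(-i)=-ic\neq 0$, $\Psi$ is an orientation-preserving local diffeomorphism, and $\partial D$ near $(a,0)$ coincides with $\Psi(\partial B\cap U)$ for a small neighborhood $U$ of $(0,-1)$.

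Next I would parametrize $\partial B$ near $(0,-1)$ by $\delta$ close to $0$ using $\mathbf{b}(\delta)=(\sin\delta,-\cos\delta)$, so that $\mathbf{b}'(0)=(1,0)$ and $\mathbf{b}''(0)=(0,1)$. Using $h_u(0,-1)=0$, the second-order Taylor expansion of $h$, and the Cauchy--Riemann equations $s_u=-h_v$, $s_v=h_u$, one obtains
\begin{equation*}
t(\delta)-a=\tfrac{1}{2}\bigl(c+h_{uu}(0,-1)\bigr)\,\delta^2+O(\delta^3),\qquad s(\delta)=-c\,\delta+O(\delta^2).
\end{equation*}
Inverting the second relation as $\delta=-s/c+O(s^2)$ and substituting into the first yields $\tau(s)-a=\alpha_2 s^2+o(s^2)$ with $\alpha_2=(c+h_{uu}(0,-1))/(2c^2)$. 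The symmetric expansion at $(0,1)$, based on $\parabordo_1''(\theta_{\rm n})<0$, produces the analogous quadratic behavior of $\partial D$ near the other crack tip $(b,0)$.

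The main obstacle is establishing the positivity $\alpha_2>0$, i.e., $c+h_{uu}(0,-1)>0$. This coincides precisely with the Morse nondegeneracy of the minimum of $h|_{\partial B}$ at $(0,-1)$, already proved in \texttt{step 4} of the proof of Theorem \ref{lem:piani}. Indeed the chain rule yields
\begin{equation*}
(h|_{\partial B})''(\theta_{\rm s})=\nabla h(0,-1)\cdot\mathbf{b}''(0)+\mathbf{b}'(0)^T\,\mathrm{Hess}\,h(0,-1)\,\mathbf{b}'(0)=c+h_{uu}(0,-1),
\end{equation*}
and the same quantity equals $\parabordo_1''(\theta_{\rm s})\,\psi'(\theta_{\rm s})^2>0$ by condition \textup{(A)}, which closes the argument.
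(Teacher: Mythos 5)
Your proof is correct but takes a genuinely different route from the paper. The paper argues by contradiction and uses the minimality of $\immmapmin$: assuming $\alpha_2 = 0$, analyticity and $D\subset\{t\geq a\}$ force $\tau(s)-a=\alpha_4 s^4+o(s^4)$ with $\alpha_4\geq 0$; conformality of $X$ (area element $\geq 1$) then yields a lower bound $\H^2(\immmapmin\cap\{t<a+\eps\})\geq\mathcal{L}^2(D\cap\{a\leq t<a+\eps\})$ of order $\eps^{5/4}$, and the contradiction comes from an explicit competitor Lipschitz surface whose area in the slab is $O(\eps^{3/2})$. You instead compute $\alpha_2$ in closed form: since $\extmapmin$ and $X$ are both conformal parametrizations of the same extended surface, the transition $\Psi=X^{-1}\circ\extmapmin$ is a plane conformal map, so $f=h+is$ is holomorphic (after adjusting the sign of $s$, which is immaterial to second order); Taylor-expanding at $-i$ together with the Cauchy--Riemann relations gives $\alpha_2=\bigl(c+h_{uu}(0,-1)\bigr)/(2c^2)$ with $c=h_v(0,-1)>0$, and positivity of the numerator is precisely the Morse nondegeneracy $(h|_{\partial\disk})''(\theta_{\rm s})>0$ already established in \texttt{step 4} of the proof of Theorem \ref{lem:piani}. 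Your approach is shorter, avoids the competitor construction altogether, yields an explicit formula for $\alpha_2$, and makes transparent how the nondegeneracy hypothesis \eqref{eq:nondege} feeds directly into assertion (i); the paper's variational argument is more self-contained and would adapt more readily if the conformal/Morse machinery were unavailable.
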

\begin{proof}
Let us consider the point $(a,0)$.
From the analiticity of $\partial \dommap$ (Lemma \ref{lemma:regularity_boundary}) and the fact 
that $(a,0)$ minimizes the $t$-component in $\partial \dommap$, we
can express it locally 
in a neighborhood
 of $(a,0)$ as the graph 
$(\tau(\secondoparametroastratto),\secondoparametroastratto)$ of a function
$\tau: (s^-, s^+) \to \R$ defined in a neighborhood  $(s^-,s^+)$
of the origin that
can be Taylor expanded as
\begin{equation*}\label{eq:shapetaylor}
\tau(\secondoparametroastratto) = a + \alpha_2 \secondoparametroastratto^2 
+ \alpha_3 \secondoparametroastratto^3 + \alpha_4 \secondoparametroastratto^4 + o(\secondoparametroastratto^4), \qquad
s \in (s^-, s^+),
\end{equation*}
with $\alpha_2 \geq 0$.

Assume by contradiction that 
\begin{equation*}\label{eq:alpha2}
\alpha_2 = 0.
\end{equation*} 
Since $\dommap$ is contained in the half-plane $\{t\geq a\}$
it follows that 
\begin{equation*}\label{eq:alphatre}
\alpha_3 = 0 \quad \mbox{ and } \quad \alpha_4 \geq 0.
\end{equation*}
We shall now compute the area $A(\eps)$ of 
$$\immmapmin^\eps:=\immmapmin \cap \{t < a + \eps\}=X(\dommap\cap S_\eps)$$ 
for small positive values of $\eps$, where 
$S_\eps:=\{(t,s):\,a\leq t<a+\eps\}$.
Using the conformal map $X$ we need 
to integrate the area element over the set
$\dommap\cap S_\eps$.
However the integrand is the modulus of the external 
product of the two derivatives of $X$ with
respect to $\ppa$ and to $\spa$, 
which is always greater than or equal to $1$, 
so that, integrating, we get
\begin{equation}\label{eq:shapeestimate}
A(\eps) \geq \mathcal{L}^2(\dommap\cap S_\eps) \geq c \eps^{1 + 1/4}
\end{equation}
for some positive constant $c$ independent of $\eps$.

We now want to show that the minimality 
of $\immmapmin$ entails that $\H^2(\immmapmin^\eps) \leq c \eps^{1 + 1/2}$, 
which is in contradiction with \eqref{eq:shapeestimate}.
Indeed we can compare the area of $\immmapmin$ with the competitor surface
$$
\Sigma:=\Sigma_1\cup\Sigma_2\cup\Sigma_3\cup \Sigma_4,
$$
where (see Figure \ref{fig:competitor}):
\begin{itemize}
\item[-]  $\Sigma_1$ is the parabolic sector delimited by the 
osculating parabola to $\Gamma$ in the minimum point and 
by the plane $\{t=a+\eps\}$;
\item[-] $\Sigma_2$ is the portion of the plane $\{t=a+\eps\}$ 
between the curve $\immmapmin\cap\{t=a+\eps\}$ and the 
boundary of $\Sigma_1$;
\item[-] $\Sigma_3$ is obtained connecting linearly 
each point of the osculating parabola
with the point of $\Gamma$ having the same $t$-coordinate;
\item[-] $\Sigma_4:=\immmapmin\cap\{a+\eps\leq t\leq b\}$.
\end{itemize}

\begin{figure}
\centering
\def\svgwidth{6cm}
\begingroup%
  \makeatletter%
  \providecommand\color[2][]{%
    \errmessage{(Inkscape) Color is used for the text in Inkscape, but the package 'color.sty' is not loaded}%
    \renewcommand\color[2][]{}%
  }%
  \providecommand\transparent[1]{%
    \errmessage{(Inkscape) Transparency is used (non-zero) for the text in Inkscape, but the package 'transparent.sty' is not loaded}%
    \renewcommand\transparent[1]{}%
  }%
  \providecommand\rotatebox[2]{#2}%
  \ifx\svgwidth\undefined%
    \setlength{\unitlength}{265.8352536bp}%
    \ifx\svgscale\undefined%
      \relax%
    \else%
      \setlength{\unitlength}{\unitlength * \real{\svgscale}}%
    \fi%
  \else%
    \setlength{\unitlength}{\svgwidth}%
  \fi%
  \global\let\svgwidth\undefined%
  \global\let\svgscale\undefined%
  \makeatother%
  \begin{picture}(1,0.86727757)%
    \put(0,0){\includegraphics[width=\unitlength]{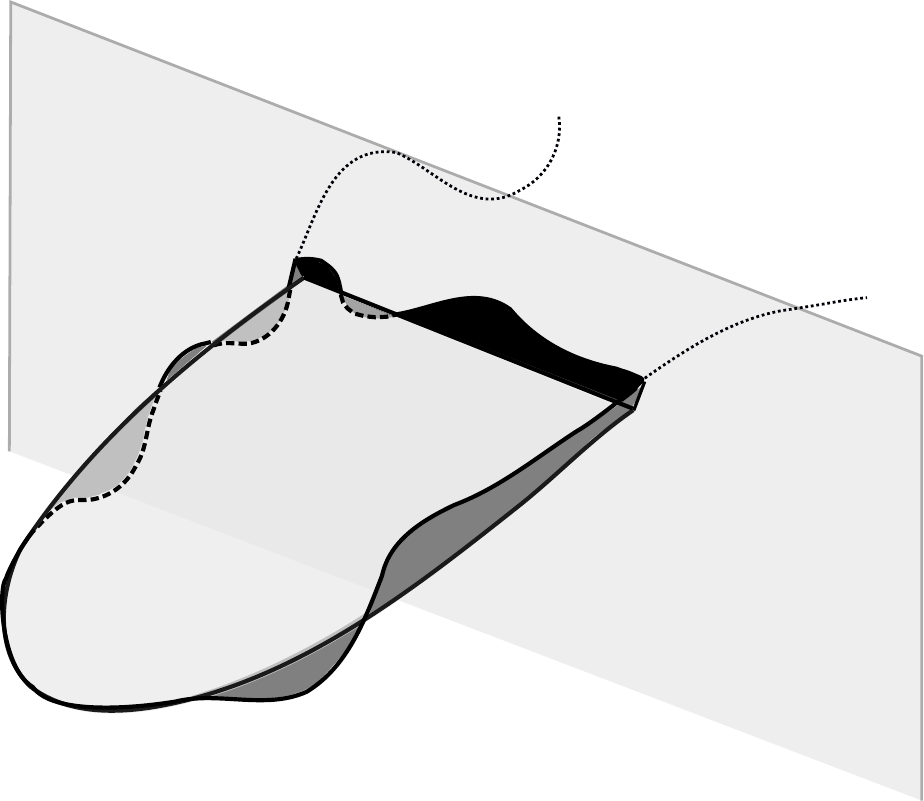}}%
    \put(0.69609895,0.26005424){\color[rgb]{0,0,0}\makebox(0,0)[lb]{\smash{$\{t=a+\eps\}$}}}%
    \put(0.23621444,0.31287418){\color[rgb]{0,0,0}\makebox(0,0)[lb]{\smash{$\Sigma_1$}}}%
    \put(0.61796567,0.5010257){\color[rgb]{0,0,0}\makebox(0,0)[lb]{\smash{$\Sigma_2$}}}%
    \put(0.48705749,0.21663898){\color[rgb]{0,0,0}\makebox(0,0)[lb]{\smash{$\Sigma_3$}}}%
    \put(0.66461104,0.65149486){\color[rgb]{0,0,0}\makebox(0,0)[lb]{\smash{$\Sigma_4$}}}%
  \end{picture}%
\endgroup%

\caption{\small{The competitor surface $\Sigma$.
$\Sigma_1$, $\Sigma_2$ and $\Sigma_3$ 
are the light gray, black and dark gray surface respectively.}}
\label{fig:competitor}
\end{figure}

Notice that $\Sigma$ is a Lipschitz surface and $\partial \Sigma=\Gamma$. Moreover 
$\immmapmin = \immmapmin^\eps \cup \Sigma_4$ with $\immmapmin^\eps \cap \Sigma_4=\emptyset$.
Thus, using also the minimality of $\immmapmin$, we get
$$
\H^2(\immmapmin)=A(\eps) + \H^2(\Sigma_4)\leq\H^2(\Sigma)\leq \sum_{i=1}^4 \H^2(\Sigma_i) ,
$$
which implies $A(\eps)\leq \H^2(\Sigma_1)+\H^2(\Sigma_2)+\H^2(\Sigma_3)$.
Now, we notice that, for a constant $c$ independent of $\eps$:
\begin{itemize}
\item[-]$\H^2(\Sigma_1) \leq c \eps^{1+1/2}$, since it is a parabolic sector, 
\item[-]$\H^2(\Sigma_2) \leq c\eps^{1+1/2}$ because $\immmapmin$ is bounded
by the two planes of the wedge,
\item[-]$\H^2(\Sigma_3)=o(\eps^{1+1/2})$
because $\immmapmin^\eps$ is contained in
the inside of a cylindrical shape obtained by translation 
of $\Gamma$ in the direction orthogonal
to both the tangent vector to $\Gamma$ 
in its minimum point and the vector $(1,0,0)$.
\end{itemize}
Thus we get the contradicting relation:
$$
c_1\eps^{1+1/2} \geq A(\eps) \geq c_2\eps^{1+1/4},
$$
where $c_1$ and $c_2$ are two positive constants independent of $\eps$.
\end{proof}

%
%
\section{Appendix 1: some useful results on the Plateau's problem}\label{sec:app}
In this appendix we briefly collect all definitions
and results on the Plateau's problem,
with the related references, needed in the proofs
of Theorems \ref{teo:graph_main}, \ref{teo:general_main}, \ref{prop:analytic} and \ref{lem:piani}.

\subsection{Parametric approach}

Let $B\subset \R^2_{(\parahilduno,\parahilddue)}$ be the unit open disk
and $\Gamma$ be an oriented\footnote{The orientation is provided by fixing a homeomorphism from $\partial \disk$ onto $\Gamma$.}
rectifiable closed simple curve in $\R^3$.
We are interested in minimizing the area functional
$$
 \int_B |\paramap_\parahilduno \wedge 
\paramap_\parahilddue| ~d\parahilduno
\, d\parahilddue
$$
in the class\footnote{Since $\Gamma$ is rectifiable, we have
$\mathcal{C}(\Gamma)\neq\emptyset$.}
$$
\mathcal{C}(\Gamma)=\left\{\paramap \in H^{1,2}(B;\R^3)\cap \mathcal C(\partial \disk; \R^3), 
~ \paramap_{|\partial \disk}(\partial \disk) = \Gamma, \,Y_{|\partial \disk}\mbox{ weakly monotonic}
\right\}.
$$
The set $Y(B)$ for $Y\in \mathcal{C}(\Gamma)$ is called a \textit{disk-type surface} spanning $\Gamma$.
\begin{Definition}[\textbf{Disk-type area-minimizing solution}]
\textup{
We refer to a solution of the minimum problem 
\begin{equation}\label{eq:plateau}
\inf_{Y\in\mathcal{C}(\Gamma)}\int_\disk|\paramap_\parahilduno \wedge 
\paramap_\parahilddue| ~d\parahilduno
\, d\parahilddue
\end{equation}
as \textit{disk-type area-minimizing solution
of Plateau's problem for the contour $\Gamma$}.
Its image in $Y(B)\subset \R^3$ is called \textit{area-minimizing surface} spanning $\Gamma$,
but sometimes, with a small abuse of language, also \textit{area-minimizing solution}, identifying the 
image and the parametrization. We usually denote such a $Y(B)$ by $\immmapmin$.}
\end{Definition}
For further details about the formulation of Plateau's problem
we refer to  \cite[chapter 4, p. 270]{DiHiSa:10}.

\smallskip

Concerning the existence of a solution of \eqref{eq:plateau} the following 
holds.

\begin{Theorem}[\textbf{Existence of minimizers and interior regularity}]\label{teo:existence}
Problem \eqref{eq:plateau}
admits a solution $\paramap \in \mathcal C^2(B) \cap 
\mathcal C(\overline B)$, such that 
\begin{equation}\label{eq:lapzero}
\Delta \paramap =0 \qquad {\rm in}~ B
\end{equation}
and the conformality relations hold:
	\begin{equation}\label{eq:conformalrelations}
		|\paramap_\parahilduno|^2
=|\paramap_\parahilddue|^2\qquad{\rm and}\qquad  
\paramap_\parahilduno\cdot\paramap_\parahilddue=0 \qquad {\rm in}~ B.
	\end{equation}
Moreover the restriction $Y_{|\partial\disk}$ is a continuous, strictly monotonic
map onto $\Gamma$.
\end{Theorem}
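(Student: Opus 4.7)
The plan is to reduce the Plateau problem to the minimization of the Dirichlet energy, exploit the direct method, and then recover conformality and area-minimality at the end. Define the Dirichlet functional
\begin{equation*}
D(\paramap) := \frac{1}{2}\int_{\disk} \left(|\paramap_{\parahilduno}|^2 + |\paramap_{\parahilddue}|^2\right) d\parahilduno\, d\parahilddue,
\end{equation*}
and recall the pointwise inequality $|\paramap_\parahilduno \wedge \paramap_\parahilddue| \le \tfrac{1}{2}(|\paramap_\parahilduno|^2 + |\paramap_\parahilddue|^2)$, with equality exactly when the conformality relations \eqref{eq:conformalrelations} hold. Thus $A(\paramap) \le D(\paramap)$ on $\mathcal{C}(\Gammau)$, with equality at conformal maps. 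The strategy is: (i) minimize $D$ over $\mathcal{C}(\Gammau)$ to get a candidate $\paramap$; (ii) show $\paramap$ is harmonic; (iii) show $\paramap$ is conformal; (iv) conclude via the inequality that $\paramap$ also minimizes $A$.

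For step (i) I would first quotient out the conformal automorphism group of $\disk$ by imposing a \emph{three-point condition}: fix three distinct points $\theta_1, \theta_2, \theta_3 \in \partial \disk$ and three distinct points $P_1, P_2, P_3 \in \Gammau$ (in matching cyclic order) and restrict to
\begin{equation*}
\mathcal{C}^*(\Gammau) := \{\paramap \in \mathcal{C}(\Gammau) : \paramap_{|\partial \disk}(\theta_i) = P_i, \ i=1,2,3\}.
\end{equation*}
This class is non-empty (take a $D$-smooth extension of a weakly monotone parametrization of $\Gammau$ normalized by the three points). Take a minimizing sequence $(\paramap_n) \subset \mathcal{C}^*(\Gammau)$ for $D$. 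Since $D(\paramap_n)$ is bounded and each $\paramap_n$ has prescribed boundary values on a fixed compactum, the sequence is bounded in $H^{1,2}(\disk; \R^3)$ and, up to subsequences, converges weakly to some $\paramap$. The hard part here is boundary control: I would apply the Courant--Lebesgue lemma, which uses the Dirichlet bound to guarantee equicontinuity of the boundary traces; together with the three-point condition this forbids degeneration of the boundary map into a constant on an arc, and passes monotonicity to the limit. Lower semicontinuity of $D$ under weak $H^{1,2}$ convergence then yields $\paramap \in \mathcal{C}^*(\Gammau)$ and $D(\paramap) = \inf_{\mathcal{C}^*}D$.

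For step (ii), outer variations $\paramap + \varepsilon \phi$ with $\phi \in \C_c^\infty(\disk;\R^3)$ leave the boundary values unchanged, so the first variation of $D$ vanishes, giving \eqref{eq:lapzero} weakly; Weyl's lemma then yields $\paramap \in \C^\omega(\disk;\R^3)$ and classical harmonicity. For step (iii), which I expect to be the main subtlety, I would use \emph{inner variations}, i.e.\ reparametrizations $\paramap \circ \psi_\varepsilon$ for a family of diffeomorphisms $\psi_\varepsilon = \mathrm{Id} + \varepsilon \eta$ of $\disk$ fixing $\partial \disk$. These perturbations remain in $\mathcal{C}^*(\Gammau)$, so $\frac{d}{d\varepsilon}\big|_0 D(\paramap\circ\psi_\varepsilon) = 0$. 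A direct computation shows this first inner variation equals
\begin{equation*}
\int_{\disk} \operatorname{Re}\bigl[(|\paramap_\parahilduno|^2 - |\paramap_\parahilddue|^2 - 2i\, \paramap_\parahilduno\cdot \paramap_\parahilddue)\, \overline{\partial}(\eta_1 + i\eta_2)\bigr]\, d\parahilduno\,d\parahilddue,
\end{equation*}
whose vanishing for all $\eta$ forces the holomorphic Hopf differential $\Phi := |\paramap_\parahilduno|^2 - |\paramap_\parahilddue|^2 - 2i\, \paramap_\parahilduno\cdot\paramap_\parahilddue$ to be identically zero on $\disk$, which is exactly \eqref{eq:conformalrelations}. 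Combined with the pointwise inequality $A \le D$ (now equality), $\paramap$ minimizes $A$ over $\mathcal{C}(\Gammau)$ as well, since any competitor can be precomposed with a quasi-conformal homeomorphism to decrease $D$ down to $A$ up to arbitrarily small error.

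Finally, for the continuity and monotonicity statement on $\partial \disk$, continuity up to the boundary follows from the Courant--Lebesgue estimate applied to the minimizer itself, which produces the modulus of continuity, giving $\paramap \in \C(\odisk;\R^3)$. Monotonicity of $\paramap_{|\partial\disk}$ is preserved in the weak limit from $\mathcal{C}^*(\Gammau)$; strict monotonicity is then obtained by a standard reflection/contradiction argument: if $\paramap_{|\partial\disk}$ were constant on a non-trivial arc, by Schwarz reflection across that arc $\paramap$ would extend harmonically to an open neighborhood of the arc with constant boundary values, contradicting analyticity together with injectivity of $\Gammau$. The main obstacle in the whole scheme is the inner-variation argument producing conformality — specifically, justifying the differentiation of $D$ along the family $\paramap \circ \psi_\varepsilon$ for a merely $H^{1,2}$ map and extracting holomorphicity of the Hopf differential from the resulting distributional identity.
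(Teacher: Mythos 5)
The paper does not prove this statement: it is quoted as a classical result, with the ``proof'' consisting of a citation to Dierkes--Hildebrandt--Sauvigny, \emph{Minimal Surfaces}, Main Theorem~1, Chapter~4. Your sketch reproduces precisely the Douglas--Rad\'o--Courant argument that this reference follows: replace the area integrand by the Dirichlet energy, fix the conformal group of $\disk$ by the three-point normalization, use the Courant--Lebesgue lemma for equicontinuity of boundary traces together with weak lower semicontinuity to produce a Dirichlet minimizer, use outer variations and Weyl's lemma for harmonicity, use inner variations and the resulting holomorphicity and decay of the Hopf differential $\Phi=|\paramap_\phu|^2-|\paramap_\phd|^2-2i\,\paramap_\phu\cdot\paramap_\phd$ to obtain conformality, and invoke Morrey's $\varepsilon$-conformality lemma to identify $\inf_{\C(\Gammau)} A$ with $\inf_{\C(\Gammau)} D$. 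So the approach coincides with the one behind the citation, and the outline is substantially correct.

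One spot where your sketch is thinner than the standard treatment is the final strict-monotonicity claim. The correct contradiction is not ``constant boundary values contradict analyticity together with injectivity of $\Gammau$.'' Rather: if $\paramap_{|\partial\disk}$ were constant on a nontrivial arc $\omega\subset\partial\disk$, the tangential derivative $\paramap_\theta$ would vanish there; each component of $\paramap$ reflects harmonically (Schwarz) across $\omega$, so $\nabla\paramap$ is continuous up to $\omega$, and the conformality relations then force the normal derivative to vanish on $\omega$ as well, i.e.\ $\nabla\paramap\equiv 0$ on $\omega$. Real-analyticity of the harmonic extension now forces $\paramap$ to be constant on all of $\disk$, contradicting $\paramap(\partial\disk)=\Gammau$ being a non-degenerate Jordan curve. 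It is this vanishing-gradient/analyticity argument, not injectivity of $\Gammau$ per se, that closes the proof.
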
 
\begin{proof}
See for instance \cite[Main Theorem 1, chapter 4, p. 270]{DiHiSa:10}.
\end{proof}

\begin{Remark}[\textbf{Three points condition}]\label{rk:3points}\rm
One can impose on 
 a minimizer 
$\paramap$ the so-called three points condition:
 this means that we can fix three points $\omega_1$, $\omega_2$ and 
 $\omega_3$ on $\partial \disk$ and three points $P_1$, $P_2$ and $P_3$ 
 on $\Gamma$ (in such a way that the orientation of $\Gamma$ is respected) and find a 
solution $\paramap$  of \eqref{eq:plateau}
such that 
$\paramap(\omega_j)=P_j$ for any $j=1,2,3$.
\end{Remark}

\begin{Definition}[\textbf{Minimal surface}]
\textup{
A map $\paramap \in \C^2(B)\cap \C(\overline{B})$ satisfying
\eqref{eq:lapzero} and \eqref{eq:conformalrelations} mapping $\partial B$ onto $\Gamma$
in a weakly monotonic way is called a \textit{minimal surface spanning $\Gamma$}.
}
\end{Definition}

\smallskip

Concerning the regularity of a map $Y:\disk\to\R^3$
parametrizing a minimal surface, we cannot a priori 
avoid \textit{branch points}.

\begin{Definition}[\textbf{Branch point}]\label{def:intbranch}
\textup{
A point $\omega_0 \in B$ is called an \textit{interior branch point} for 
$\paramap \in \C^2(B)\cap\C(\overline{B})$ if
\begin{equation}\label{eq:branch}
	|\paramap_\parahilduno (\omega_0) \wedge \paramap_\parahilddue(\omega_0)|=0.
\end{equation}
If $\paramap$ is differentiable on $\partial \disk$, and $\omega_0\in 
\partial \disk$ 
is such that \eqref{eq:branch} holds, then $\omega_0$ is called
a \textit{boundary branch point}.
}
\end{Definition}
Observe that 
if $\omega_0$ is a branch point and \eqref{eq:conformalrelations} holds, then 
$\paramap_\parahilduno(\omega_0) = \paramap_\parahilddue(\omega_0)=0$.

It is known that interior branch points for a solution
 of \eqref{eq:plateau}
can be excluded.

\begin{Theorem}[\textbf{Absence of interior branch points}]\label{teo:nointbrapo}
Let $\paramap$ be as in Theorem \ref{teo:existence}.
Then $\paramap$ has no interior branch points. 
\end{Theorem}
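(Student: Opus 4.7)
The plan is to adapt Osserman's classical argument, which gives the absence of interior branch points for disk-type area-minimizing minimal surfaces in $\R^3$. The harmonicity \eqref{eq:lapzero} and the conformality relations \eqref{eq:conformalrelations} imply that, writing $w = \parahilduno + i\parahilddue$ and introducing the $\mathbb{C}^3$-valued function $\phi := \paramap_\parahilduno - i\paramap_\parahilddue$ on $\disk$, the map $\phi$ is holomorphic in $\disk$ and satisfies $\phi_1^2 + \phi_2^2 + \phi_3^2 \equiv 0$. The interior branch points of $\paramap$ are precisely the common zeros of $\phi$, so at a hypothetical branch point $\omega_0\in \disk$ each component has a zero of some finite order $n\geq 1$, since $\phi$ is holomorphic and not identically zero on the connected domain $\disk$.

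The first step is to reduce to the case of a \emph{true} branch point. A branch point $\omega_0$ is called false if some local holomorphic change of parameter near $\omega_0$ turns $\paramap$ into a regular conformal parametrization: in that case $\omega_0$ is an artifact of the parametrization and can be removed without changing the image, and without leaving the class $\mathcal{C}(\Gamma)$. The remaining case is that of a true branch point, which persists under every conformal reparametrization; by the asymptotic normal form forced by the order of vanishing of $\phi$, the image of $\paramap$ in a small neighborhood of $\paramap(\omega_0)$ must either fold onto itself or exhibit cusp-like self-contact.

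For such a true branch point the idea is to produce a competitor $\widetilde\paramap\in \mathcal{C}(\Gamma)$ with strictly smaller area, contradicting the minimality of $\paramap$. Osserman's construction excises the small branched disk around $\omega_0$ and replaces it by an explicit embedded patch spanning the (nearly circular) boundary curve; the area reduction coming from unfolding the multiply covered piece beats, for sufficiently small neighborhoods, the perturbative error produced by the higher order terms in the expansion of $\paramap$. The main obstacle in carrying out this program is precisely the sharp quantitative area estimate that makes the surgery strictly area-decreasing: the size of the excised disk, the profile of the replacement, and the control of the remainder terms all depend on $n$ and on the geometry of the image near $\paramap(\omega_0)$, and they must be tuned simultaneously. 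For the complete proof I would cite \cite[Chapter 3]{DiHiSa:10} together with the original paper of Osserman; all the ingredients above are standard for disk-type area-minimizers in $\R^3$ and adapt verbatim to the present setting.
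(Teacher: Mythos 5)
Your proposal correctly outlines Osserman's argument (holomorphic $\phi = \paramap_{\parahilduno} - i\paramap_{\parahilddue}$, branch points as its common zeros, and the area-decreasing surgery near a branch point) and ultimately defers to Osserman and Dierkes--Hildebrandt--Sauvigny; the paper takes exactly this route, proving the theorem by citing \cite{Os:70} directly.
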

\begin{proof}
See \cite[Main Theorem]{Os:70}.
\end{proof}

Under the stronger assumption that $\Gamma$ is analytic the classical Lewy's regularity theorem \cite{Lewy:51}
guarantees that the solution  of \eqref{eq:plateau} is analytic on $\overline{B}$ .

\begin{Theorem}[\textbf{Absence of boundary branch points}]\label{teo:noboundarybp}
Let $\Gamma$ be analytic and $\paramap$ be a solution
of \eqref{eq:plateau}. 
Then  $\paramap$ is analytic up to $\Gamma$ and 
has  no  boundary branch points. 
\end{Theorem}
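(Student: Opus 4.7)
The plan is to separate the two assertions. The analyticity of $Y$ up to $\Gamma$ is precisely the content of Lewy's classical boundary regularity theorem, recalled immediately above the statement: harmonicity of $Y$ in $B$, continuity up to $\partial B$, the conformality relations \eqref{eq:conformalrelations}, and the weakly monotonic parametrization of the analytic curve $\Gamma$ by $Y_{|\partial B}$ together permit a Schwarz-type reflection across $\partial B$, extending $Y$ as an analytic $\R^3$-valued map to an open neighborhood $B^{\mathrm{ext}}\supset \overline B$. So the first claim is a citation.

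For the absence of boundary branch points I would argue by contradiction, following the strategy of Gulliver--Lesley (see also \cite[Chapter 3]{DiHiSa:10}). Suppose $w_0 \in \partial B$ is such that $Y_u(w_0)=Y_v(w_0)=0$. Form the $\C^3$-valued holomorphic function
\[
\Phi(w) := Y_u(w) - i\, Y_v(w),
\]
defined on $B^{\mathrm{ext}}$ thanks to the previous step; harmonicity gives holomorphicity, and \eqref{eq:conformalrelations} gives the isotropy relation $\Phi\cdot\Phi \equiv 0$. Since $\Phi(w_0)=0$ and $\Phi\not\equiv 0$, there is a finite order of vanishing $n\geq 1$ at $w_0$, and one obtains an asymptotic expansion
\[
Y(w) = Y(w_0) + \mathrm{Re}\bigl(A\,(w-w_0)^{n+1}\bigr) + o\bigl(|w-w_0|^{n+1}\bigr), \qquad A\in\C^3,\ A\cdot A=0,\ A\neq 0,
\]
i.e., near $w_0$ the surface has the structure of an $(n{+}1)$-fold branched sheet meeting $\Gamma$.

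The core step, and the one I expect to be the main obstacle, is to build out of this local branched picture a competitor surface with strictly smaller area, thereby contradicting the minimality of $Y$ in $\mathcal C(\Gamma)$. Concretely, one excises from $B$ a small half-disk $B_\rho(w_0)\cap B$ and replaces $Y$ on it by a map that (i) agrees with $Y$ on the inner semicircular arc $\partial B_\rho(w_0)\cap B$, (ii) reparametrizes the removed boundary arc of $\partial B$ monotonically onto the corresponding piece of $\Gamma$, and (iii) is obtained by ``unfolding'' the $(n{+}1)$-fold structure — typically by composing with an auxiliary holomorphic map that kills the branching, and then projecting appropriately so that the image stays close to $Y(B_\rho(w_0)\cap B)$. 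The analyticity of $\Gamma$ is crucial here because it allows $\Gamma$ to be locally expanded in a convergent power series around $Y(w_0)$, so one can match the boundary trace of the competitor to $\Gamma$ exactly while controlling every term in the area expansion; a comparison of the leading-order contributions then shows that the competitor saves an area of positive order in $\rho$, contradicting minimality.

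The delicate points to be handled carefully are: ensuring that the competitor is admissible (weakly monotonic trace onto $\Gamma$, still in $H^{1,2}\cap\mathcal C$); controlling the remainder terms uniformly in $\rho$ using the convergent Taylor expansions of both $Y$ and $\Gamma$; and verifying that the area gain coming from ``unfolding'' the branch strictly dominates the loss along the inner arc. This is where the analytic (as opposed to merely smooth) hypothesis on $\Gamma$ is genuinely used, since only an analytic boundary provides the exact series that let one match the boundary trace while retaining sharp control on areas.
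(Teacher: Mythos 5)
Your proposal matches the paper's treatment: the paper simply cites Lewy for the boundary analyticity (in the remark immediately preceding the theorem) and Gulliver--Lesley \cite{Gu_Le:73} for the exclusion of boundary branch points, and you invoke the same two results. Your additional sketch of the Gulliver--Lesley replacement-surface argument is accurate as an outline, though the paper itself does not reproduce it.
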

\begin{proof}
See \cite{Gu_Le:73}.
\end{proof}

\begin{Theorem}[\textbf{Analytic extension}]\label{teo:ext}
Let $\Gamma$ be analytic and $\paramap$ be a minimal surface spanning $\Gamma$. 
Then $\paramap$ can be extended as a minimal surface 
across $\Gamma$, that is there exist an open set $\extdisk\supset\overline{B}$ and 
an analytic map $Y^{{\rm ext}}:\extdisk \to \R^3$ such that 
$Y^{{\rm ext}}=\paramap$ in $\overline{B}$ and
$Y^{{\rm ext}}$ satisfies \eqref{eq:lapzero} and \eqref{eq:conformalrelations}
in $\extdisk$.
\end{Theorem}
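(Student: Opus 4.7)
The plan is to build the extension locally at each boundary point of $\overline{B}$ by invoking Theorem \ref{teo:noboundarybp}, and then to propagate the minimal surface equations to the enlarged domain by analytic continuation. Concretely, by Theorem \ref{teo:noboundarybp} each point $p \in \overline{B}$ has an open neighbourhood $U_p \subset \R^2$ and a real-analytic map $Y^{(p)} \colon U_p \to \R^3$ with $Y^{(p)} = Y$ on $U_p \cap \overline{B}$; for $p \in B$ this is just interior analyticity of the harmonic map $Y$, while for $p \in \partial B$ it is the content of Lewy's boundary regularity (which is why we need the analiticity of $\Gamma$). I would then set $\widehat{B} := \bigcup_{p \in \overline{B}} U_p$, which is open and contains $\overline{B}$.

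Next, I would verify that the local extensions glue to a single analytic map $Y^{{\rm ext}} \colon \widehat{B} \to \R^3$. For any two points $p, q$ with $U_p \cap U_q \neq \emptyset$ and any connected component $V$ of $U_p \cap U_q$, the intersection $V \cap B$ is a nonempty open subset of $\R^2$ (shrinking the $U_p$'s at boundary points one can arrange that each $U_p \cap B$ is connected and that every $V$ meets $B$), and both $Y^{(p)}$ and $Y^{(q)}$ agree there with $Y$; by the real-analytic identity theorem, $Y^{(p)} = Y^{(q)}$ on $V$. A standard partition-of-unity / cover argument on the compact set $\overline{B}$ then produces a well-defined analytic $Y^{{\rm ext}}$ on $\widehat{B}$ restricting to $Y$ on $\overline{B}$.

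The last step is to transfer the minimal surface equations from $B$ to $\widehat{B}$. The three scalar quantities
\begin{equation*}
\Delta Y^{{\rm ext}}, \qquad |Y^{{\rm ext}}_u|^2 - |Y^{{\rm ext}}_v|^2, \qquad Y^{{\rm ext}}_u \cdot Y^{{\rm ext}}_v
\end{equation*}
are real-analytic on $\widehat{B}$ and vanish on the open set $B \subset \widehat{B}$. Since $\widehat{B}$ is connected (as $B$ is connected and each $U_p$ is a connected neighbourhood of a point of $\overline{B}$, so $\widehat{B}$ stays connected after all gluings), the identity theorem forces them to vanish identically on $\widehat{B}$, which are exactly \eqref{eq:lapzero} and \eqref{eq:conformalrelations} for $Y^{{\rm ext}}$.

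The only genuinely nontrivial ingredient is the boundary analyticity statement Theorem \ref{teo:noboundarybp}; everything else is a routine gluing plus analytic continuation. The one technical care required is in the gluing step: one should shrink the boundary neighbourhoods $U_p$ so that each $U_p \cap B$ is connected (so that the identity theorem applies on a genuinely two-dimensional set rather than on the one-dimensional boundary arc), which is always possible since $\partial B$ is a smooth curve.
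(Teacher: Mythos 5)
Your strategy — extend locally near each boundary point and glue by the real-analytic identity theorem, then push the equations $\Delta Y=0$ and the conformality relations onto the larger domain by analytic continuation — is essentially the same as the paper's, which cites a subarc-extension result from Dierkes--Hildebrandt--Tromba \cite[Theorem 1, chapter 2.3]{DiHiTr:10a}, applies it to two overlapping analytic subarcs covering $\Gamma$, and observes that the extensions agree on the overlap by analyticity. The gluing details you spell out (shrinking the $U_p$'s to disks so that each overlap is connected and meets $B$) are fine; with disks centered at points of $\overline{B}$, any nonempty pairwise intersection contains a subinterval of the segment joining the centers, hence an open set inside $B$, so the identity theorem applies.

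The one thing I would flag is a scope mismatch in your citation. You invoke Theorem \ref{teo:noboundarybp} to get boundary analyticity, but that theorem's hypothesis is that $Y$ solves the minimization problem \eqref{eq:plateau}, while Theorem \ref{teo:ext} is stated for an arbitrary minimal surface spanning $\Gamma$ (harmonic, conformal, weakly monotone on $\partial B$ — not necessarily area-minimizing). The part of Theorem \ref{teo:noboundarybp} you actually use, namely analyticity of $Y$ up to $\Gamma$, does hold for general minimal surfaces with analytic boundary — this is precisely Lewy's theorem \cite{Lewy:51} — whereas the "no boundary branch points" part (which you do not need here) genuinely requires area-minimality (Gulliver--Lesley). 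So the argument is mathematically sound, but the reference should be Lewy \cite{Lewy:51} (or, as in the paper, the subarc extension theorem in \cite{DiHiTr:10a}), not Theorem \ref{teo:noboundarybp}.
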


\begin{proof}
From \cite[Theorem 1, chapter 2.3]{DiHiTr:10a} one can extend 
a minimal surface across an analytic subarc of $\Gamma$. We apply 
this result twice to two overlapping
subarcs  covering $\Gamma$. Where the two extensions overlap, they have to coincide
due to analiticity.
\end{proof}

The following classical result can be found 
in \cite[p. 66]{DiHiSa:10}.
\begin{Theorem}[\textbf{Local semicartesian parametrization}]\label{theo:trasv}
If a minimal surface Y is intersected by a family of parallel planes
$\mathcal{P}$ none of which is tangent to the given surface and if each point of the
surface belongs to some plane $\Pi \in \mathcal{P}$, then the intersection lines of these
planes with the minimal surface form a family of
curves which locally belong to a net of conformal parameters on the surface.
\end{Theorem}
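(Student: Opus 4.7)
The plan is to reduce the statement to a classical fact about harmonic conjugates on a conformally parametrized minimal surface. Fix a point $p \in \Sigma$ and, using Theorem \ref{teo:existence}, an initial conformal parametrization $Y : U \to \R^3$ of an open neighbourhood of $p$ on $\Sigma$, with $U \subset \R^2_{(u,v)}$ a small simply connected disk. Since $Y$ is conformal and $\Sigma$ is minimal, $Y$ is harmonic on $U$; in particular the real-valued function $h(u,v) := Y_1(u,v)$, which records to which plane of $\mathcal P$ a given point of $\Sigma$ belongs, is harmonic on $U$.

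Next, I would check that the transversality hypothesis is equivalent to $\nabla h$ being nowhere vanishing on $U$, exactly as verified in the proof of Theorem \ref{lem:piani}: a zero of $\nabla h$ at some $(u_0,v_0)$ would force the unit normal $(Y_u \wedge Y_v)/|Y_u \wedge Y_v|$ to equal $\pm(1,0,0)$ there, and hence the plane of $\mathcal P$ through $Y(u_0,v_0)$ to be tangent to $\Sigma$, contradicting the hypothesis. On the simply connected disk $U$ the harmonic function $h$ then admits a harmonic conjugate $\tilde h : U \to \R$, unique up to an additive constant, so that $F(w) := h(u,v) + i\tilde h(u,v)$ with $w = u+iv$ is holomorphic on $U$; the Cauchy--Riemann equations give $|F'|^2 = |\nabla h|^2 > 0$, whence $F$ is a local biholomorphism. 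After a further shrinking I may assume $F : U \to D_p$ is a biholomorphism onto an open set $D_p \subset \R^2_{(t,s)}$.

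I then define $X : D_p \to \R^3$ by $X(t,s) := Y(F^{-1}(t,s))$. By construction $X_1(t,s) = h(F^{-1}(t,s)) = t$, so $X$ has the semicartesian form required by Definition \ref{def:semicart_par}; moreover $X$ is the composition of the conformal map $Y$ with the biholomorphism $F^{-1}$, hence is itself conformal, and the intersection curves $\Sigma \cap \Pi$, $\Pi \in \mathcal P$, appear in $D_p$ as the horizontal lines $\{t = \mathrm{const}\}$. This is exactly the local net of conformal parameters predicted by the statement.

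The only delicate point in the plan is the implication ``no plane of $\mathcal P$ is tangent to $\Sigma$'' $\Rightarrow$ ``$\nabla h \neq 0$'', which tacitly requires $Y$ to be free of branch points near $p$: at a branch point $\nabla h$ would vanish for trivial reasons, unrelated to any tangency. In the intended application (Proposition \ref{prop:globalpar}) this is harmless because Theorems \ref{teo:nointbrapo} and \ref{teo:noboundarybp} already rule out interior and boundary branch points of the extended minimal surface $\extimmmapmin$; once this is granted, the remainder of the argument is routine complex analysis on the planar parameter domain $U$, and globalisation will then be carried out separately as in Section \ref{sec:global}.
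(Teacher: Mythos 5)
The paper does not actually supply a proof of Theorem \ref{theo:trasv}: it states the result and cites \cite[p.~66]{DiHiSa:10}, so there is no in-paper argument to compare against. Your harmonic-conjugate proof is the standard classical argument for this statement, and it is correct: for a conformal parametrization $Y$ of a minimal surface the first coordinate $h=Y_1$ is harmonic; after checking $\nabla h\ne0$ (the transversality), a local harmonic conjugate $\tilde h$ produces a holomorphic $F=h+i\tilde h$ with $|F'|^2=|\nabla h|^2>0$; precomposing $Y$ with the local biholomorphism $F^{-1}$ preserves the conformality relations and yields $X_1(t,s)=t$, so the intersection curves $\Sigma\cap\Pi$ become the level lines $\{t=\mathrm{const}\}$ of a conformal coordinate net. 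Your observation that the implication ``no tangent plane of $\mathcal P$'' $\Rightarrow$ ``$\nabla h\ne 0$'' secretly needs absence of branch points (where $\nabla h$ vanishes for trivial reasons) is exactly the right caveat; the theorem is implicitly about immersed minimal surfaces, and in the paper's actual application to $\extimmmapmin$ this is guaranteed by Theorems \ref{teo:nointbrapo} and \ref{teo:noboundarybp}, as you note. The argument is complete and matches what the cited source does.
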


\subsection{Non-parametric approach}
Concerning the so-called non-parametric problem and the minimal
surface equation, we give the following definition and we refer to \cite{Gi:84}
for more.

\begin{Definition}[\textbf{Non-parametric solution}]
\textup{
Let $U\subset\R^2$ be a connected, bounded, open set and let $\phi\in\C(\partial U; \R^2)$.
A \textit{solution of the minimal surface equation for the boundary datum $\phi$}
is a solution $z\in \C^2(U)\cap\C(\overline{U})$ of
\begin{equation}\label{eq:min_surf_eq}
\begin{cases}
{\rm div}\left(\frac{\grad z}{\sqrt{1+|\grad z|^2}}\right)=0 &\mbox{ in }U\\
z=\phi &\mbox{ on }\partial U.
\end{cases}
\end{equation} 
}
\end{Definition}

The existence of a solution of \eqref{eq:min_surf_eq} is given by the following result.
\begin{Theorem}[\textbf{Existence of non-parametric solutions}]\label{teo:non_par_existence}
Let $U\subset\R^2$ be bounded and open and suppose that $\partial U$
is $\C^2$ and has non negative curvature. Then \eqref{eq:min_surf_eq} admits a solution.
\end{Theorem}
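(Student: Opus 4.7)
I would prove the theorem by the continuity method, applied after approximating the boundary datum by a smooth one. Fix $\alpha\in(0,1)$, approximate $\phi$ uniformly on $\partial U$ by a sequence $\phi_n\in \C^{2,\alpha}(\partial U)$, and extend each $\phi_n$ to $\widetilde\phi_n\in \C^{2,\alpha}(\overline U)$. For each $n$, consider the family of Dirichlet problems
\begin{equation*}
(P_\tau):\quad \mathrm{div}\Bigl(\frac{\grad z}{\sqrt{1+|\grad z|^2}}\Bigr)=0 \text{ in }U,\qquad z=\tau\widetilde\phi_n \text{ on }\partial U,\qquad \tau\in[0,1],
\end{equation*}
and let $S\subseteq[0,1]$ be the set of $\tau$ for which $(P_\tau)$ admits a solution $z_\tau\in\C^{2,\alpha}(\overline U)$. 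Since $z\equiv 0$ solves $(P_0)$, we have $0\in S$, and the plan is to show $S$ is both open and closed in $[0,1]$, hence $S=[0,1]$.

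Openness I would derive from the implicit function theorem in Banach spaces, applied to the smooth operator $(w,\tau)\mapsto M(w+\tau\widetilde\phi_n)$ on $\{w\in\C^{2,\alpha}(\overline U): w|_{\partial U}=0\}\times[0,1]$ with values in $\C^{0,\alpha}(\overline U)$, where $M$ is the minimal surface operator. At a solution the linearization is a linear, uniformly elliptic operator with bounded coefficients and vanishing zeroth-order part, which is an isomorphism between the spaces at hand by standard Schauder theory. Closedness reduces, via elliptic bootstrap, to a uniform $\C^1(\overline U)$ estimate along solutions $(z_{\tau_j})$ with $\tau_j\to\tau^*$; indeed once $|\grad z|$ is uniformly bounded, each partial derivative $\partial_i z$ solves a linear uniformly elliptic equation, and De Giorgi--Nash--Moser followed by Schauder yields uniform $\C^{2,\alpha}$ bounds and hence a convergent subsequence.

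The heart of the matter is therefore the $\C^1(\overline U)$ bound, and I would split it into interior and boundary estimates. Interior: the classical Bombieri--De Giorgi--Miranda gradient estimate controls $|\grad z(x)|$ by $\mathrm{osc}_{U}\,z$ and $\mathrm{dist}(x,\partial U)^{-1}$ for any $\C^2$ solution of the minimal surface equation, using only the minimality of the graph. Boundary: at each $x_0\in\partial U$ I would construct upper and lower barriers of the form $\psi^\pm(x)=\tau\widetilde\phi_n(x_0)+w^\pm(d(x))$, where $d$ is the distance to $\partial U$ in a one-sided tubular neighborhood of $x_0$ inside $U$, and $w^\pm$ are suitable monotone profiles. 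Computing $M\psi^\pm$ using $|\grad d|=1$ produces a $(w^\pm)''/(1+(w^\pm{}')^2)^{3/2}$ term plus a term proportional to the curvature of the level sets of $d$, which near $\partial U$ is controlled by the curvature of $\partial U$ itself; the non-negativity hypothesis gives this term the sign needed to choose $w^\pm$ so that $M\psi^\pm$ has the correct sign. The comparison principle for the minimal surface equation then gives $|\grad z|\le C$ on $\partial U$, with $C$ depending only on the $\C^{2,\alpha}$ norm of the boundary data and on $\partial U$.

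Finally, solving $(P_1)$ for each smooth datum $\widetilde\phi_n$, I would pass to the limit $n\to\infty$: interior $\C^{2,\alpha}$ compactness is provided by the interior gradient estimate, while equicontinuity up to $\partial U$ follows from a further barrier construction depending only on the modulus of continuity of the boundary datum and on the non-negative curvature of $\partial U$, yielding a solution $z\in\C^2(U)\cap\C(\overline U)$ of \eqref{eq:min_surf_eq}. The expected main obstacle is the boundary gradient estimate: this is precisely where the hypothesis on $\partial U$ is unavoidable, as the classical Finn and Jenkins--Serrin counterexamples show that without non-negative boundary curvature the Dirichlet problem may fail to be solvable for some continuous boundary data.
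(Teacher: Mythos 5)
Your sketch is correct; the paper itself gives no argument at all, simply citing \cite[Theorem~13.6]{Gi:84}, and your outline — continuity method, Bombieri--De Giorgi--Miranda interior gradient estimate, boundary barriers $\phi(x_0)+w(d(x))$ exploiting the non-negative curvature of $\partial U$ to give $M\psi^\pm$ the right sign, and approximation of the continuous datum by smooth ones — is precisely the standard proof found in that reference (and in Gilbarg--Trudinger, \S\S 14--16). So you have reproduced the proof behind the citation, and there is nothing to compare beyond noting that the paper delegates it entirely.
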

\begin{proof}
See \cite[Theorem 13.6]{Gi:84}.
\end{proof}

If $\Gamma$ can be 
described as the graph of a continuous function defined on 
the boundary of a bounded convex open set, then the following representation result holds.

\begin{Theorem}\label{teo:convex_proj}
If $\Gamma$ admits a one-to-one parallel projection
onto a plane Jordan curve bounding a convex domain $U$, then \eqref{eq:plateau}
has a unique solution $X$, up to conformal $\C^1$ diffeomorphisms of $B$.
Moreover $X(B)$ can be represented as the graph of a solution $z:U\to \R$ 
of \eqref{eq:min_surf_eq} with boundary datum a function $\phi$ whose graph is $\Gamma$.
\end{Theorem}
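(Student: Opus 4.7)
The plan is to combine a non-parametric existence result with Radó's convex-projection theorem, and then conformally reparametrize the resulting graph to identify it with the parametric minimizer.

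\emph{Step 1 (Non-parametric existence).} Because the projection is one-to-one, $\Gamma$ is the graph of a continuous function $\phi$ defined on $\partial U$. Since $U$ is bounded and convex, the Dirichlet problem \eqref{eq:min_surf_eq} with datum $\phi$ admits a unique solution $z\in\C^\omega(U)\cap\C(\overline U)$. In the smooth case this is Theorem \ref{teo:non_par_existence}; for a general convex $U$ one approximates $\partial U$ from inside by smooth strictly convex curves $\partial U_n$ and $\phi$ by smooth boundary data $\phi_n$, applies the theorem on each $U_n$, and passes to the limit using the maximum principle and interior gradient estimates. Uniqueness is a standard consequence of the comparison principle for \eqref{eq:min_surf_eq}.

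\emph{Step 2 (Conformal parametrization of the graph).} Let $S:=\mathrm{graph}(z)\subset\R^3$; this is a simply connected analytic minimal surface spanning $\Gamma$. By the uniformization theorem, there exists a homeomorphism $X:\overline B\to\overline S$ which is a conformal diffeomorphism from $B$ onto $S$; the regularity of $\phi$ together with classical boundary regularity for minimal graphs over convex domains yields $X\in\C(\overline B;\R^3)$. Conformality plus minimality force $\Delta X=0$ in $B$, so $X\in\mathcal C(\Gamma)$ and $X$ satisfies \eqref{eq:lapzero}--\eqref{eq:conformalrelations}. After imposing a three-point condition (Remark \ref{rk:3points}), $X$ is determined uniquely on $\overline B$.

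\emph{Step 3 (Minimality and identification).} To see that $X$ solves \eqref{eq:plateau}, and that every solution of \eqref{eq:plateau} parametrizes $S$, we invoke Radó's theorem: whenever $\Gamma$ admits a one-to-one projection onto the boundary of a plane convex region, any disk-type minimal surface spanning $\Gamma$ is a graph over $U$. Applied to an arbitrary solution $X'$ of \eqref{eq:plateau} (whose existence is given by Theorem \ref{teo:existence}), this yields $X'(B)=\mathrm{graph}(\widetilde z)$ for some $\widetilde z$ solving \eqref{eq:min_surf_eq} with datum $\phi$, hence $\widetilde z=z$ by Step 1 and $X'(B)=S=X(B)$. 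Finally, since both $X$ and $X'$ are conformal parametrizations of the same topological disk $S$, they differ by a conformal $\C^1$ automorphism $\varphi:B\to B$, which is precisely the asserted uniqueness.

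\emph{Main obstacle.} The delicate point is Step 3: ruling out non-graph-like parametric minimizers requires Radó's convex-projection argument, essentially a maximum-principle comparison of $X'$ with the family of vertical planes through chords of $\partial U$. The remaining technical issue is boundary regularity of the conformal map in Step 2, needed to know that $X$ extends continuously to $\partial B$ and has the right image; under the mild regularity of $\Gamma$ implicit in the Plateau framework, this is supplied by the Hildebrandt--Nitsche boundary regularity theory for minimal surfaces.
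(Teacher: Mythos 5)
The paper gives no independent proof of this statement; it simply cites \cite[Theorem 1, chapter 4.9]{DiHiSa:10}. Your three-step outline (non-parametric existence and uniqueness in convex domains, conformal reparametrization of the resulting graph, and Rad\'o's convex-projection lemma to rule out non-graph parametric minimizers) is a correct summary of precisely the classical argument found in that reference, so it matches the approach the paper relies on.
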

\begin{proof}
See \cite[Theorem 1, chapter 4.9]{DiHiSa:10}.
\end{proof}
We conclude this appendix with a regularity result for
a solution of \eqref{eq:min_surf_eq}.
\begin{Theorem}\label{teo:giusticonvesso}
Let $U \subset \R^2$ be bounded open convex set with 
$\partial U$ of class $\C^2$ 
and let $z$ be a solution of \eqref{eq:min_surf_eq} with 
boundary datum $\phi\in \C^{1,\lambda}(\partial U)$ for some $\lambda \in (0,1]$.
Then $z\in \C^{0,1}(\overline{U})$.
\end{Theorem}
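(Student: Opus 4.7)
The plan is to obtain a uniform Lipschitz bound by combining a barrier-based boundary gradient estimate with a maximum principle applied to the gradient of smooth solutions, and then to pass to the limit along a mollification of $\phi$.

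\emph{Steps 1--2 (Smooth approximation and classical solvability).} Extending $\phi$ to a $\C^{1,\lambda}$ function in a tubular neighbourhood of $\partial U$, mollifying, and restricting to $\partial U$ yields a sequence $\phi_n \in \C^{2}(\partial U)$ with $\phi_n \to \phi$ uniformly and $\sup_n \|\phi_n\|_{\C^{1,\lambda}(\partial U)} \leq c\,\|\phi\|_{\C^{1,\lambda}(\partial U)}$. Since $U$ is convex with $\C^{2}$ boundary and $\phi_n$ is $\C^{2}$, classical non-parametric theory (see \cite{Gi:84}) produces a solution $z_n \in \C^{2}(\overline{U})$ of \eqref{eq:min_surf_eq} with boundary datum $\phi_n$.

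\emph{Step 3 (Uniform boundary gradient estimate via barriers).} Fix $x_0 \in \partial U$, and let $\tau,\nu$ be the tangent and inward unit normal at $x_0$. The $\C^{1,\lambda}$ regularity of $\phi_n$ gives
\[
\bigl|\phi_n(x) - \phi_n(x_0) - \grad_T\phi_n(x_0)\cdot(x-x_0)\bigr| \leq c\,|x-x_0|^{1+\lambda}, \qquad x \in \partial U,
\]
while the convexity of $U$ and $\C^{2}$ regularity of $\partial U$ imply $\nu\cdot(x-x_0) \geq 0$ on $\overline{U}$ and $\nu\cdot(x-x_0) \geq c_0 |x-x_0|^{2}$ on $\partial U$ near $x_0$. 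Consider the candidate affine upper barrier
\[
w^+_n(x) := \phi_n(x_0) + \grad_T\phi_n(x_0)\cdot(x-x_0) + M\,\nu\cdot(x-x_0),
\]
possibly corrected by a concave super-linear term if $\lambda<1$. For $M$ sufficiently large, depending only on $\|\phi\|_{\C^{1,\lambda}}$, $\|\phi\|_{L^\infty}$ and the geometry of $\partial U$, one verifies $w^+_n \geq \phi_n$ on $\partial U$; being affine, $w^+_n$ is a solution of the minimal surface equation, so comparison yields $z_n \leq w^+_n$ in $U$. A symmetric lower barrier gives $|\partial_\nu z_n(x_0)| \leq M$; combined with the tangential identity $\partial_\tau z_n = \partial_\tau \phi_n$ on $\partial U$, this produces $\sup_{\partial U}|\grad z_n| \leq K$ with $K$ independent of $n$.

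\emph{Step 4 (Global bound via maximum principle and passage to the limit).} Differentiating the equation $a^{ij}(\grad z_n)\partial_{ij} z_n = 0$ componentwise shows that each partial derivative $\partial_k z_n$ satisfies a linear elliptic equation, so that $\sqrt{1+|\grad z_n|^{2}}$ is a subsolution of the linearised minimal surface operator; the weak maximum principle then gives $\sup_U|\grad z_n| \leq \sup_{\partial U}|\grad z_n| \leq K$. Uniqueness of continuous solutions of \eqref{eq:min_surf_eq} (from the comparison principle) entails $z_n \to z$ uniformly on $\overline{U}$, and the uniform Lipschitz bound passes to the limit to yield $z \in \C^{0,1}(\overline{U})$ with Lipschitz constant at most $K$.

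\emph{Main obstacle.} The delicate step is the verification in Step 3 that $w^+_n \geq \phi_n$ on $\partial U$ near $x_0$. When $\lambda=1$ the quadratic convexity margin $\nu\cdot(x-x_0)\gtrsim|x-x_0|^{2}$ exactly matches the residual error $|x-x_0|^{1+\lambda}$ of the affine approximation of $\phi_n$, so choosing $M$ large is enough. For $\lambda<1$ the residual dominates the margin near $x_0$ and the purely affine barrier fails in an arbitrarily small ball around $x_0$; one must either introduce a non-affine correction such as $A(\nu\cdot(x-x_0))^{(1+\lambda)/2}$ (or a suitable profile built from the signed distance to $\partial U$), whose supersolution property requires a direct computation exploiting the signs of the curvatures of $\partial U$ and of the profile, or combine the affine barrier valid outside a small ball around $x_0$ with a local near-field estimate based on the a priori $L^\infty$ bound coming from the maximum principle.
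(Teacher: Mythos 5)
The paper does not prove this statement; it simply cites \cite[Theorem 13.7]{Gi:84}, so there is no proof of the paper's own to compare against. Your overall architecture --- mollify the datum and solve classically, derive a boundary gradient estimate by barriers, propagate it to the interior by the gradient maximum principle valid for the minimal surface equation in dimension two, and pass to the limit via the comparison principle --- is exactly the strategy behind the cited result, and Steps 1, 2 and 4 are sound as written.

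Step 3, however, contains two genuine gaps, only one of which you flag. The one you acknowledge is that the purely affine barrier fails for $\lambda<1$, because the error $|x-x_0|^{1+\lambda}$ in the affine approximation of $\phi_n$ beats the quadratic convexity margin near $x_0$. The one you do not acknowledge is more basic: the lower bound $\nu\cdot(x-x_0)\geq c_0|x-x_0|^2$ for $x\in\partial U$ near $x_0$ requires the curvature of $\partial U$ to be strictly positive at $x_0$. The hypotheses of the statement only require $U$ to be bounded, convex and with $\C^2$ boundary, so the curvature may vanish on an entire arc (and indeed, in the paper's application the sets $K_\mu$ are obtained by rounding corners and can have flat boundary arcs); on such an arc $\nu\cdot(x-x_0)\equiv 0$ and your comparison $w^+_n\geq\phi_n$ on $\partial U$ fails for every finite $M$, already when $\lambda=1$. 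Both difficulties are removed by discarding the point-based affine barrier in favour of the global barrier $w^\pm=\Phi\pm\psi(d)$, where $\Phi$ is an extension of $\phi$ to a collar of $\partial U$, $d=\mathrm{dist}(\cdot,\partial U)$, and $\psi$ is increasing and concave with $\psi(0)=0$ and $\psi'(0^+)$ large. Then $w^\pm=\phi$ exactly on $\partial U$, so the boundary inequality holds trivially at every boundary point regardless of the curvature, and the supersolution computation exploits the concavity of $d$ (equivalently $\Delta d\leq 0$), which is valid on any convex domain whether or not the curvature vanishes. You gesture at this (``a suitable profile built from the signed distance''), but verifying that such a $w^+$ really is a supersolution for the nonlinear minimal surface operator, handling the cross terms between $\grad\Phi$ and $\psi'(d)\grad d$, and tracking that all constants depend only on $\|\phi\|_{\C^{1,\lambda}(\partial U)}$ and the geometry of $U$, is precisely the content of \cite[Theorem 13.7]{Gi:84} and cannot be dismissed as a routine correction.
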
  
\begin{proof}
See \cite[Theorem 13.7]{Gi:84}.
\end{proof}

\section{Appendix 2: a result from Morse theory}\label{sec:appb}
In this short section 
we report a result from  \cite[Theorem 10]{Mo_Vansc:34} on critical points
of Morse functions.
The result holds in any dimension, but we need and state it only for $n=2$.

Let $U$ be a bounded open subset of $\R^2$ 
and let 
$\openMorse$ 
be an 
open subset of $U$ of class $\mathcal C^3$
 with  $\overline \openMorse\subset U$. Suppose that 
\begin{itemize}
	\item[-] $f: U \to \R$ is a Morse function;
        \item[-] $\openMorse$ contains all critical points of $f$;
	\item[-] all critical points of 
the restriction 
$f_{|\partial \openMorse}$
of $f$ to $\partial \openMorse$ are non degenerate (i.e.,
$f_{|\partial \openMorse}$ is a Morse function).
\end{itemize}  

Define
\begin{equation}\label{eq:bordomeno}
\partial^-_f \openMorse:=\{b \in \partial 
\openMorse:\,\ \grad f(b) \cdot \nu_\openMorse(b) <0\},
\end{equation}
where $\nu_\openMorse(b)$ 
denotes the outward unit normal to $\partial \openMorse$ at $b
\in \partial \openMorse$. 

{}For $i=0,\,1,\,2$, 
denote by
 $m_i(f,\openMorse)$ 
the number of critical points of 
index $i$ of $f$ in $\openMorse$ and by 
$m_i(f_{\vert \partial_f^- \openMorse})$ 
the number of critical points of index $i$ of 
$f_{\vert \partial \openMorse}$ 
on $\partial^-_f \openMorse$, with $m_2(f_{\vert \partial_f^- \openMorse}):=0$. 
Define
\begin{equation}\label{def:Mi}
M_i(f,\openMorse\cup\partial \openMorse) := m_i(f, \openMorse) + 
m_i(f_{\vert \partial^-_f \openMorse}), \qquad i=0,\,1\,,2.
\end{equation}

The following result holds.

\begin{Theorem}\label{teo:mors}
We have
\begin{equation*}\label{eq:morse}
	 M_0(f,\openMorse\cup\partial \openMorse) - M_1(f,\openMorse\cup\partial \openMorse)+M_2(f,\openMorse\cup\partial \openMorse) = \chi (\openMorse),
\end{equation*} 
where $\chi(\openMorse)$ is the Euler characteristic of $\openMorse$.
\end{Theorem}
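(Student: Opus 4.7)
The plan is to prove the formula by filtering $\overline{\openMorse}$ by sublevel sets of $f$ and tracking how the topology — and hence the Euler characteristic — changes as one crosses the (finitely many) interior critical values of $f$ and boundary critical values of $f_{|\partial \openMorse}$. Concretely, set $\openMorse_c := \overline{\openMorse} \cap \{f \leq c\}$ and show that $\chi(\openMorse_c)$ is well-defined and piecewise constant in $c$, with prescribed jumps at each critical value; once that is done, letting $c$ grow from just below $\min_{\overline\openMorse} f$ to just above $\max_{\overline\openMorse} f$ yields $\chi(\openMorse)$ on the left and the alternating sum $\sum_i (-1)^i M_i$ on the right.

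First, I would establish the deformation lemma in this boundary setting: if the closed strip $\{a \leq f \leq b\}$ contains no critical point of $f$ in $\openMorse$ and no critical point of $f_{|\partial \openMorse}$ lying in $\partial^-_f \openMorse$, then $\openMorse_a$ is a deformation retract of $\openMorse_b$. The key ingredient is a modified vector field $V$ that agrees with $-\grad f / |\grad f|^2$ on a neighborhood of the interior and is tangent to $\partial \openMorse$ near points of $\partial^+_f \openMorse \cup \partial^-_f\openMorse$ that are not critical for $f_{|\partial \openMorse}$; the boundary condition $\grad f \cdot \nu_\openMorse \neq 0$ and the absence of critical points of $f_{|\partial \openMorse}$ on $\partial^-_f\openMorse$ in the strip guarantee that $V$ exists, is Lipschitz, and that its flow carries $\openMorse_b$ onto $\openMorse_a$. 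Consequently $\chi(\openMorse_c)$ does not change between consecutive critical values.

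Next, I would compute the jump in $\chi(\openMorse_c)$ at each critical value $c$. At an interior critical point $p \in \openMorse$ of index $i$, the Morse lemma applied in $U$ (where $f$ is smooth and nondegenerate) shows that $\openMorse_{c+\delta}$ is homotopy equivalent to $\openMorse_{c-\delta}$ with an $i$-cell attached, so $\chi$ jumps by $(-1)^i$. At a boundary critical point $q \in \partial \openMorse$ of $f_{|\partial \openMorse}$ of Morse index $j \in \{0,1\}$, one chooses local $\mathcal C^3$ coordinates $(x,y)$ on a half-disk with $\{y \leq 0\}$ corresponding to $\openMorse$ and straightens $f_{|\partial \openMorse}$; the two cases must be separated:
\begin{itemize}
\item[-] If $q \in \partial^+_f \openMorse$, then $\partial_y f(q) < 0$ combined with the sign of $\partial_{xx} f(q)$ shows that a neighborhood of $q$ in $\openMorse_c$ is already filled in for $c$ slightly below the critical value, so no new cell is attached and $\chi$ is unchanged.
\item[-] If $q \in \partial^-_f \openMorse$, then $\partial_y f(q) > 0$, and a model computation shows that crossing $c$ attaches a $j$-cell (a point if $j=0$, an arc connecting two components if $j=1$), so $\chi$ jumps by $(-1)^j$.
\end{itemize}

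Summing the jumps gives $\chi(\openMorse) - \chi(\emptyset) = \sum_i (-1)^i m_i(f,\openMorse) + \sum_j (-1)^j m_j(f_{|\partial^-_f \openMorse}) = \sum_i (-1)^i M_i(f, \openMorse \cup \partial \openMorse)$, using the convention $m_2(f_{|\partial^-_f \openMorse}) = 0$. The main obstacle is the local analysis at boundary critical points: one must verify, with the correct sign conventions, that the index of $q$ as a Morse point of the restriction $f_{|\partial \openMorse}$ coincides with the dimension of the cell attached to the sublevel set, and that the transversality condition $\grad f(q) \cdot \nu_\openMorse(q) \neq 0$ (guaranteed by our standing assumption that all critical points of $f$ lie in $\openMorse$) together with the nondegeneracy of $f_{|\partial \openMorse}$ suffices to rule out pathological level-set behaviour near $q$. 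Once these local models are in place, the Euler characteristic bookkeeping is purely formal.
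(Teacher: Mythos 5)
The paper does not prove Theorem \ref{teo:mors}: it only records the statement and cites Morse and Van Schaack \cite{Mo_Vansc:34}. Your argument supplies a proof where the paper gives none, and it is a correct outline of the standard Morse theory on a compact surface with boundary: filter $\overline\openMorse$ by sublevel sets $\openMorse_c$, prove a deformation lemma across strips containing no interior critical values and no critical values of $f_{|\partial\openMorse}$ coming from $\partial^-_f\openMorse$ (using a negative-gradient-like vector field made tangent to $\partial\openMorse$), and identify the jump of $\chi(\openMorse_c)$ at each critical value with the cell attached: $(-1)^i$ at an interior nondegenerate critical point of index $i$, $(-1)^j$ at a critical point of $f_{|\partial\openMorse}$ of index $j$ lying on $\partial^-_f\openMorse$, and zero at one lying on $\partial^+_f\openMorse$. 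Your remark that the standing hypothesis ``all critical points of $f$ lie in $\openMorse$'' forces $\grad f\cdot\nu_\openMorse\neq 0$ at each critical point of $f_{|\partial\openMorse}$ (otherwise the full gradient would vanish on $\partial\openMorse$) is exactly the right justification for the transversality needed in the local models. One small slip in the prose: with $\openMorse$ identified locally with $\{y\leq 0\}$ the outward normal is $(0,1)$, so $q\in\partial^+_f\openMorse$ corresponds to $\partial_y f(q)>0$ (you wrote $<0$) and $q\in\partial^-_f\openMorse$ to $\partial_y f(q)<0$; the model computations and conclusions you describe are nevertheless the correct ones, so this is a sign typo, not a gap. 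With that corrected, the bookkeeping $\chi(\openMorse)=\sum_i(-1)^i M_i(f,\openMorse\cup\partial\openMorse)$ is exactly as you state.
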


\section{Appendix 3: the space $\DM$}\label{sec:appdomain}
In this section we discuss a property of the space $\DM$ 
introduced at the beginning of Section \ref{sec:notation}.

In \cite{AcDa:94} the following result is proven.
\begin{Theorem}\label{teo:a_dm}
Let $\mappav\in \BVo$. 
The following conditions are equivalent: 
\begin{itemize}
\item[-]  $\rel(\mappav, \Om)=\displaystyle 
\int_\Om |\M\left(\grad \mappav(x)\right)|\,dx\, dy<+\infty$;
\item[-] $\mappav\in \Wuu$, $\M(\grad \mappav) 
\in L^1(\Om; \R^6)$ and there exists a 
sequence $(\mappav^\mu) \subset\C^1(\Om; \R^2)$ 
converging to $\mappav$ in $L^1(\Om; \R^2 )$ such that 
the sequence $(\M (\grad \mappav^\mu))$ 
converges to $\M(\grad \mappav)$ in $L^1(\Om; \R^6)$.
\end{itemize}
\end{Theorem}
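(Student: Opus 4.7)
\textbf{Proof proposal for Theorem \ref{teo:a_dm}.}

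The plan is to prove the two implications separately; the nontrivial direction is (second) $\Rightarrow$ (first).

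For the direction \emph{second item} $\Rightarrow$ \emph{first item}, I would argue as follows. Since $(\mappav^\mu) \subset \C^1(\Om;\R^2)$ with $\mappav^\mu \to \mappav$ in $\Luno$ and $\M(\grad \mappav^\mu) \to \M(\grad \mappav)$ in $L^1(\Om;\R^6)$, the reverse triangle inequality gives $|\M(\grad \mappav^\mu)| \to |\M(\grad \mappav)|$ in $L^1(\Om)$, hence
\begin{equation*}
\A(\mappav^\mu, \Om) = \int_\Om |\M(\grad \mappav^\mu)|\,dx\,dy \longrightarrow \int_\Om |\M(\grad \mappav)|\,dx\,dy.
\end{equation*}
By the very definition \eqref{eq:arearilassata} of the relaxation, this sequence is admissible, so $\rel(\mappav,\Om) \leq \int_\Om |\M(\grad \mappav)|\,dx\,dy$. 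Since $\mappav \in \Wuu$ entails $D^s \mappav = 0$, the lower bound \eqref{eq:altra_diseq} of Acerbi--Dal Maso gives the reverse inequality, and equality follows.

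For the direction \emph{first item} $\Rightarrow$ \emph{second item}, I would start from the relaxation definition and select a sequence $(\mappav^\mu) \subset \C^1(\Om;\R^2)$ with $\mappav^\mu \to \mappav$ in $\Luno$ and $\A(\mappav^\mu,\Om) \to \rel(\mappav,\Om) < +\infty$. Since $\A$ dominates $\int_\Om (1 + |\grad \mappav^\mu|)\,dx\,dy$ and the target space is $\R^2$, the sequence is bounded in $\BVo$, so up to subsequences $\mappav^\mu \rightharpoonup \mappav$ in the $\BV$-weak$^*$ sense, confirming $\mappav \in \BVo$. Applying the lower bound \eqref{eq:altra_diseq} together with the lower semicontinuity of $\int_\Om |\M(\grad \cdot)|\,dx\,dy$ for the absolutely continuous part (Reshetnyak-type), the hypothesis $\rel(\mappav,\Om) = \int_\Om |\M(\grad \mappav)|\,dx\,dy$ forces $|D^s \mappav|(\Om) = 0$, hence $\mappav \in \Wuu$ and $\M(\grad \mappav) \in L^1(\Om;\R^6)$.

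The remaining, and most delicate, step is to upgrade weak convergence of the minors to strong $L^1$-convergence. The idea is to regard the vector measures $\M(\grad \mappav^\mu)\,dx\,dy$ as Radon measures on $\Om$ with values in $\R^6$; these are equibounded in total variation, so up to extraction they converge weakly$^*$ to a measure $\nu$. A weak continuity argument for minors under $\BV$-convergence identifies the absolutely continuous part of $\nu$ with $\M(\grad \mappav)\,dx\,dy$, and writes $\nu = \M(\grad \mappav)\,dx\,dy + \nu^s$. Lower semicontinuity gives
\begin{equation*}
\int_\Om |\M(\grad \mappav)|\,dx\,dy + |\nu^s|(\Om) \leq \liminf_\mu \int_\Om |\M(\grad \mappav^\mu)|\,dx\,dy = \int_\Om |\M(\grad \mappav)|\,dx\,dy,
\end{equation*}
forcing $\nu^s = 0$ and convergence of the total variations. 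Finally, the combination of weak$^*$ convergence with convergence of total variations, together with the convexity of $|\cdot|$, yields (via a Reshetnyak-continuity/Scheff\'e-type argument) strong $L^1$-convergence of $\M(\grad \mappav^\mu)$ to $\M(\grad \mappav)$.

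The main obstacle I anticipate is this last step: identifying the limit measure of the minors and excluding a singular concentration part rigorously requires a careful application of weak continuity of subdeterminants under $\BV$ convergence combined with sharp lower semicontinuity, rather than softer compactness arguments. Once the singular part is ruled out and the total variations match, the passage from weak$^*$ to strong $L^1$ convergence follows from a standard argument (weak convergence plus convergence of norms in a uniformly convex setting, reduced componentwise, or Vitali-type equi-integrability).
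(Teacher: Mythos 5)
The paper does not prove Theorem \ref{teo:a_dm}; it states it as a result proven in \cite{AcDa:94} and then uses it as input for Lemma \ref{lem:relaxation}, so there is no proof in the paper to compare against. Judging your proposal on its own merits: the direction (second item) $\Rightarrow$ (first item) is correct and complete, the key points being that $L^1$-convergence of the minors forces $\A(\mappav^\mu,\Omega)\to\int_\Omega|\M(\grad\mappav)|\,dx\,dy$, and that $\mappav\in W^{1,1}$ kills the singular term in \eqref{eq:altra_diseq}.

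The direction (first item) $\Rightarrow$ (second item) has two genuine gaps. First, you assert that a ``weak continuity argument for minors under $\BV$-convergence'' identifies the absolutely continuous part of the weak$^*$ limit $\nu$ with $\M(\grad\mappav)\,dx\,dy$. No such theorem is available at the $W^{1,1}$/$\BV$ level: weak continuity of the Jacobian determinant requires substantially more integrability (Ball--M\"uller type hypotheses), and for $C^1$ sequences with bounded area the Jacobian measures can both concentrate singularly and interact with the absolutely continuous part through cancellation, so $\nu^{\mathrm{ac}}$ need not equal $\M(\grad\mappav)\,dx\,dy$. The subsequent lower-semicontinuity step ruling out $\nu^s$ presupposes this identification, so the argument is circular; establishing it is essentially the hard content of the Acerbi--Dal Maso theorem, not a preliminary. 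Second, the final upgrade from weak$^*$ convergence plus matching total variations to strong $L^1$ convergence of the densities is false as stated: $L^1$ is not uniformly convex, and the Radon--Riesz property fails there. Concretely, $f_\mu(x)=1+\sin(\mu x)$ on $(0,2\pi)$ satisfies $f_\mu\rightharpoonup f\equiv1$ weakly with $\|f_\mu\|_{L^1}=\|f\|_{L^1}$, yet $\|f_\mu-f\|_{L^1}\not\to0$. To close this step you would need an independent equi-integrability or a.e.\ convergence input for the minors, which you have not produced. More structurally, the theorem asserts only the \emph{existence} of a sequence with strongly converging minors; trying to extract that property from an arbitrary recovery sequence is likely too ambitious, and the construction in \cite{AcDa:94} is considerably more elaborate.
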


Hence $\DM$ is the subset of ${\rm BV}(\Om;\R^2)$
satisfying one of the two equivalent conditions of Theorem \ref{teo:a_dm}.
The following lemma shows that  $\rel$
 can be obtained also by relaxing $\A$ in $\DM$.

\begin{Lemma}\label{lem:relaxation}
Let $\mappa\in\BVo$. Then
\begin{equation}\label{eq:relax}
\rel (\mappa,\Om)= \inf \left\{ \liminf_{\eps \to 0^+} \rel (\ue,\Om),\,
\,(\ue)_\eps\subset \DM,\,\,\ue \to \mappa \mbox{ \textup{in} } L^1(\Om; \R^2)\right\}.
\end{equation}
\end{Lemma}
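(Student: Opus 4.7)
The plan is to prove the two inequalities separately, writing $R$ for the right hand side of \eqref{eq:relax}.

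\textbf{The easy inequality} $R \leq \rel(\mappa,\Omega)$. I would simply observe that $\C^1(\Omega;\R^2) \subset \DM$: indeed, for $\mappav \in \C^1$ the Acerbi--Dal Maso result recalled just before Theorem \ref{teo:a_dm} gives $\rel(\mappav,\Omega) = \mathcal A(\mappav,\Omega) = \int_\Omega |\M(\grad \mappav)|\,dx\,dy$, so $\mappav$ lies in $\DM$. Hence any sequence $(\mappav_\eps)\subset\C^1$ admissible for the definition \eqref{eq:arearilassata} of $\rel(\mappa,\Omega)$ is also admissible in $R$, and for such sequences $\rel(\mappav_\eps,\Omega)=\mathcal A(\mappav_\eps,\Omega)$. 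Passing to the infimum gives $R\leq\rel(\mappa,\Omega)$.

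\textbf{The nontrivial inequality} $\rel(\mappa,\Omega) \leq R$. Fix any $(\ue)_\eps\subset\DM$ with $\ue\to\mappa$ in $L^1$; it suffices to show
\begin{equation*}
\rel(\mappa,\Omega) \leq \liminf_{\eps\to 0^+}\rel(\ue,\Omega).
\end{equation*}
Here I would use a diagonal argument based on Theorem \ref{teo:a_dm}. For each fixed $\eps$, since $\ue\in\DM$, Theorem \ref{teo:a_dm} supplies a sequence $(\ue^\mu)_{\mu}\subset\C^1(\Omega;\R^2)$ with $\ue^\mu \to \ue$ in $L^1$ and $\M(\grad \ue^\mu)\to \M(\grad \ue)$ in $L^1(\Omega;\R^6)$. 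In particular
\begin{equation*}
\lim_{\mu\to 0^+}\mathcal A(\ue^\mu,\Omega) = \int_\Omega |\M(\grad \ue)|\,dx\,dy = \rel(\ue,\Omega).
\end{equation*}
Choosing $\mu=\mu(\eps)$ small enough, a standard diagonalization produces a sequence $(\mappav_\eps):=(\ue^{\mu(\eps)})\subset\C^1(\Omega;\R^2)$ such that $\mappav_\eps\to\mappa$ in $L^1$ and
\begin{equation*}
\liminf_{\eps\to 0^+}\mathcal A(\mappav_\eps,\Omega) \leq \liminf_{\eps\to 0^+}\rel(\ue,\Omega).
\end{equation*}
Since $(\mappav_\eps)$ is now an admissible sequence in the definition \eqref{eq:arearilassata} of $\rel(\mappa,\Omega)$, the inequality $\rel(\mappa,\Omega)\leq \liminf_\eps\rel(\ue,\Omega)$ follows. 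Taking the infimum over $(\ue)\subset\DM$ yields $\rel(\mappa,\Omega)\leq R$.

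\textbf{Main obstacle.} There is no real obstacle: the only subtle point is making the diagonal selection $\mu(\eps)\to 0^+$ so simultaneously $\|\ue^{\mu(\eps)}-\mappa\|_{L^1}\to 0$ and $\mathcal A(\ue^{\mu(\eps)},\Omega) - \rel(\ue,\Omega)\to 0$, which is routine (Attouch's lemma or an explicit choice $\mu(\eps)$ making both error quantities at most $\eps$). Alternatively, one can bypass the diagonalization entirely by invoking the fact that $\rel(\cdot,\Omega)$, being defined as the $L^1$-lower semicontinuous envelope of $\mathcal A(\cdot,\Omega)$, is automatically $L^1$-lower semicontinuous, so $\rel(\mappa,\Omega)\leq \liminf_\eps\rel(\ue,\Omega)$ is immediate; the diagonal presentation is however more self-contained and fits the style of the paper.
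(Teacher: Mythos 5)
Your proof is correct and follows essentially the same route as the paper's: the inclusion $\C^1\subset\DM$ together with $\rel=\A$ on $\C^1$ gives the easy inequality, and a diagonalization via Theorem \ref{teo:a_dm} gives the other. The alternative you point out — that $\rel(\cdot,\Omega)$, being an $L^1$-lower semicontinuous envelope, is automatically $L^1$-lower semicontinuous, so $\rel(\mappa,\Omega)\le\liminf_\eps\rel(\ue,\Omega)$ for \emph{any} $L^1$-convergent sequence — is valid and in fact shorter; the paper chooses the more explicit diagonal argument, which has the same content.
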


\begin{proof}
Trivially $\rel(u,\Om)$ is larger than or equal to the right hand side of \eqref{eq:relax},  
since $\C^1(\Om; \R^2)\subset \DM$ and $\rel=\A$ on $\C^1(\Om;\R^2)$.\\
In order to prove the opposite inequality, let $(\mappav_\eps)$ be a 
sequence in $\DM$ such that
$$
\lim_{\eps \to 0^+} \rel(\mappav_\eps, \Om)=\inf \left\{ \liminf_{\eps \to 0^+} 
\rel (\mappa_\eps,\Om),\,\,(\mappa_\eps)\subset \DM,\,\,\mappa_\eps \to \mappa \mbox{ \textup{in} } L^1(\Om; \R^2)\right\}.
$$
Thanks to Theorem \ref{teo:a_dm}, for each $\eps >0$ 
we can find a sequence $(\mappav_\eps^\mu)_\mu$ in $\Cuno$ converging to $\mappav_\eps$ in $L^1(\Om;\R^2)$
as $\mu\to 0^+$ such that 
$$
\A(\mappav_\eps^\mu,\Om)=\int_\Om |\M\left(\grad \mappav_\eps^\mu(x)\right)|\, dx 
\overset{\mu \to 0^+}{\longrightarrow}\int_\Om |\M\left(\grad \mappav_\eps(x)\right)|\, dx=\rel(\mappav_\eps,\Om).
$$
Thus by a diagonal process we obtain a sequence $(\mappav_\eps^{\mu(\eps)})\subset \Cuno$ 
converging to $\mappa$ in $L^1(\Om;\R^2)$ as $\eps \to 0^+$ such that the
right hand side of \eqref{eq:relax} equals
$$
\lim_{\eps\to 0^+} \A(\mappa^{\mu(\eps)}_\eps,\Om)=\lim_{\eps\to 0^+}\rel(\mappav_\eps,\Om),
$$
and this concludes the proof.

\end{proof}

\end{document}